\newcommand{\R}{\mathbb R}
\newcommand{\N}{\mathbb N}
\newcommand{\E}{\operatorname{E}\!}
\renewcommand{\leq}{\leqslant}
\newcommand{\dif}{\mathrm{d}}
\DeclareMathOperator{\dist}{dist}
\renewcommand{\dif}{\operatorname{d}\!}
\newcommand{\lebe}{\operatorname{L}}
\newcommand{\sobo}{\operatorname{W}}
\newcommand{\imag}{\operatorname{i}}
\newcommand{\locc}{\operatorname{loc}}
\newcommand{\hold}{\operatorname{C}}
\newcommand{\bv}{\operatorname{BV}}
\newcommand{\ball}{{B}}
\newcommand{\di}{\operatorname{div}}
\newcommand{\A}{\mathbb{A}}
\newcommand{\bd}{\operatorname{BD}}
\newcommand{\spt}{\operatorname{spt}}
\newcommand{\lin}{\operatorname{Lin}}
\newcommand{\id}{\operatorname{Id}}
\newcommand{\trace}{\operatorname{Tr}}
\newcommand{\I}{\mathcal{I}}
\newcommand{\sym}{\operatorname{sym}}
\newcommand{\dashint}{\fint}
\newcommand{\bl}{\operatorname{BL}}
\newcommand{\C}{\mathbb{C}}
\newcommand{\trli}{\mathrm{F}}
\renewcommand{\bl}{\operatorname{BL}}
\newcommand{\image}{\mathrm{im\,}}
\newcommand{\mres}{\mathbin{\vrule height 1.6ex depth 0pt width
0.13ex\vrule height 0.13ex depth 0pt width 1.3ex}}
\numberwithin{equation}{section}
\newtheorem{theorem}{Theorem}[section]
\newtheorem{lemma}[theorem]{Lemma}
\newtheorem{proposition}[theorem]{Proposition}
\theoremstyle{definition}
\theoremstyle{remark}
\newtheorem{remark}[theorem]{Remark}
\newtheorem{example}[theorem]{Example}
\newtheorem{conjecture}[theorem]{Open Problem}
\numberwithin{equation}{section}
\newcommand{\zeroset}{\setminus\{0\}}
\begin{document}


\title[Limiting Trace Inequalities for Differential Operators]{On Limiting Trace Inequalities for \\ Vectorial Differential Operators}
\author[F. Gmeineder]{Franz Gmeineder}
\author[B. Rai\c{t}\u{a}]{Bogdan Rai\c{t}\u{a}}
\author[J. Van Schaftingen]{Jean Van Schaftingen}
\address[F.~Gmeineder]{Universit\"{a}t Bonn, Mathematisches Institut, Endenicher Allee 60, Bonn, Germany}
\address[B.~Rai\c{t}\u{a}]{University of Warwick, Zeeman Building, Coventry CV4 7HP, United Kingdom}
\address[J.~Van Schaftingen]{Universit\'e catholique de Louvain,
Institut de Recherche en Math\'e\-matique et Physique,
Chemin du Cyclotron 2 bte L7.01.01,
1348 Louvain-la-Neuve,
Belgium}

\keywords{Trace embeddings, overdetermined elliptic operators, elliptic and cancelling operators, $\mathbb{C}$-elliptic operators, Triebel-Lizorkin spaces, functions of bounded variation, functions of bounded deformation, $\bv^{\A}$-spaces, strict convergence, Sobolev spaces.}


\begin{abstract}
We establish that trace inequalities for vector fields $u\in\hold^\infty_c(\R^n,\R^N)$
\begin{align}\label{eq:abs}\tag{$*$}
	\|D^{k-1}u\|_{\lebe^{\frac{n-s}{n-1}}(\dif\mu)}\leq c\|\mu\|^{\frac{n-1}{n-s}}_{\lebe^{1,n-s}}\|\A[D]u\|_{\lebe^1(\dif\mathscr{L}^n)}
\end{align}
hold if and only if the $k$-th order homogeneous linear differential operator $\A[D]$ on $\R^n$ is elliptic and cancelling, provided that $s<1$, and give partial results for $s=1$, where stronger conditions on $\A[D]$ are necessary. Here, $\|\mu\|_{\lebe^{1,\lambda}}$ denotes the Morrey norm of $\mu$ so that such traces can be taken, for example, with respect to $\mathscr{H}^{n-s}$-measure restricted to fractals of codimension $s<1$. The class of inequalities \eqref{eq:abs} give a systematic  generalisation of \textsc{Adams}' trace inequalities to the limit case $p=1$ and can be used to prove trace embeddings for functions of bounded $\A$-variation,  thereby comprising Sobolev functions and functions of bounded variation or deformation. We also prove a multiplicative version of \eqref{eq:abs}, which implies strict continuity of the associated trace operators on $\bv^\A$. 
\end{abstract}
\maketitle

\setcounter{tocdepth}{1}


\section{Introduction}

Traces in function space theory are a weak notion of restriction, which is well-defined and stable under convergence,
and which can be defined on Lebesgue-negligible sets by some weak differentiability or other regularity properties of the functions considered.
In partial differential equations and the calculus of variations, trace theory gives foundation to the prescription of boundary conditions.

The trace problem can be approached through measure theory and harmonic analysis via the cornerstone trace inequality of \textsc{Adams} for Riesz potentials of $\lebe^p$-functions \cite{AdamsTraces1,AdamsTraces2}. To set up the theme, we recall that for $0<\alpha<n$ the $\alpha$-th order Riesz potential $I_{\alpha}$ of a sufficiently integrable measurable map $f\colon\R^{n}\to\R$ is defined by 
\begin{align}\label{eq:Riesz}
I_{\alpha}f(x):=c_{n, \alpha}\int_{\R^{n}}\frac{f(y)}{|x-y|^{n-\alpha}}\dif y,\qquad x\in\R^{n}, 
\end{align}
$c_{n, \alpha } =  \frac{\Gamma((n-\alpha)/2)}{\pi^{n/2}2^\alpha\Gamma(\alpha/2)} > 0$. As proved by \textsc{Adams} \cite{AdamsTraces1,AdamsTraces2}, for $1 < p < \infty$ and $0\leq s<n$ with $0\leq s<\alpha p<n$ and $q=\frac{(n-s)p}{n-\alpha p}$, there exists $c=c(n,p,\alpha,s)>0$ such that 
\begin{align}\label{eq:adams}
	\|I_\alpha f\|_{\lebe^q(\R^{n};\dif\mu)}\leq c\|\mu\|^{1/q}_{\lebe^{1,n-s}(\R^{n})}\|f\|_{\lebe^p(\R^{n})}
\end{align}
holds for all $f\in\lebe^{p}(\R^{n})$ and positive Borel measures $\mu$ on $\R^n$. For $0\leq \lambda \leq n$,  $\|\mu\|_{\lebe^{1,\lambda}(\R^{n})}$ here denotes the Morrey norm of $\mu$, which we recall to be defined as 
\begin{align*}
\|\mu\|_{\lebe^{1,\lambda}(\R^{n})}=\sup_{B}\dfrac{|\mu|(B)}{r(B)^\lambda},
\end{align*}
the supremum ranging over all open balls $B\subset\R^n$; $r(B)$ denotes the radius of the ball $B$. Measures $\mu$ satisfying $\|\mu\|_{\lebe^{1,\lambda}(\R^{n})}<\infty$ are sometimes called $\lambda$-\emph{Ahlfors regular}. Note that, even for $s=0$, the inequality \eqref{eq:adams} does not extend to $p=1$. This can be seen by taking $p=1$, $\mu=\mathscr{L}^n$ and observing that if $f \in \lebe^1 (\R^n)$ and $f > 0$, then $\limsup_{\vert x \vert \to \infty} \vert x\vert^{n - \alpha} (I_\alpha f) (x)>0$
and thus $\|I_\alpha f\|_{\lebe^q(\R^{n};\dif\mu)} = +\infty$ since $q = \frac{n}{n - \alpha}$.

The inequality \eqref{eq:adams} has deep applications in potential theory, cf.~\cite{AH}, and moreover allows to define traces of Sobolev functions $u\in\sobo^{1,p}(\R^{n})$ on suitably regular lower dimensional subsets of $\R^{n}$ provided $1<p<n$. Indeed, given $u\in\hold_{c}^{\infty}(\R^{n})$, in the above setting we may choose $\mu=\mathscr{H}^{n-s}\mres \Sigma$, where $\Sigma \subset \R^n$ is a self-similar fractal set of codimension $s$, as discussed in \cite{Hutchinson}. Such sets are quite well-behaved, in the sense that they are reasonably close to being ``fractional hyperplanes'' by Marstrand's Theorem \cite{Marstrand}. Setting $f:=Du$ in \eqref{eq:adams} and employing the pointwise inequality $|u|\leq c(n)I_{1}(|Du|)$ which follows from the Sobolev integral representations \cite[Sec.~1.1.10,~Thm.~2]{mazya}, the existence of a norm continuous trace operator $\trace_{\Sigma}\colon \sobo^{1,p}(\R^{n})\to \lebe^{q}(\Sigma;\dif\mu)$ follows by a routine approximation argument. With the convention that $\|\cdot\|_{\lebe^p}=\|\cdot\|_{\lebe^p(\R^n;\dif\mathscr{L}^n)}$, we consequently obtain 
for $1<p<n$, $0\leq s<p$, and $q=\frac{(n-s)p}{n-p}$ 
\begin{align}\label{eq:sobolev}
	\|\trace_{\Sigma}(u)\|_{\lebe^q(\dif\mu)}\leq c\|\mu\|^{1/q}_{\lebe^{1,n-s}(\R^{n})}\|D u\|_{\lebe^p(\R^{n},\R^{n})}\quad \text{ for }u\in\sobo^{1,p}(\R^{n}).
\end{align}
Evidently, \eqref{eq:sobolev} generalises the Sobolev embedding $\sobo^{1,p}(\R^{n})\hookrightarrow\lebe^{\frac{np}{n-p}}(\R^{n})$, the latter being retrieved by setting $s=0$, $\Sigma = \R^n$ and $\mu=\mathscr{L}^{n}$ in \eqref{eq:sobolev}. 

\subsection{Limiting $\lebe^{1}$-estimates}

The present work will be concerned with generalisations and aspects of the inequalities \eqref{eq:adams} and \eqref{eq:sobolev} to the limiting case $p=1$. First, due to the lack of strong-type estimates of Riesz potentials on $\lebe^1$, the inequality \eqref{eq:adams} cannot hold for $p=1$, unless the Riesz potential operator is defined on some special strict subspaces of $\lebe^1$. On the other hand, the existence of a trace operator and inequality \eqref{eq:sobolev} can also be proved to hold for $p=1$ as is shown, e.g., in \textsc{Ziemer} \cite[Thm.~5.13.1]{Ziemer} following the foundational work of \textsc{Gagliardo} \cite{Gagliardo}. As we shall elaborate on in more detail below, the validity of \eqref{eq:sobolev} despite the failure of \eqref{eq:adams} is due to the specific structure of the operator $Du\mapsto u$, in turn being a Fourier multiplication operator with symbol homogeneous of degree $(-1)$. 

To systematically approach this theme, let $\A[D]$ be a homogeneous, constant-coefficient differential operators $\A[D]$ of order $k$ on $\R^{n}$ from $V$ to $W$; i.e., $\A[D]$ and its Fourier symbol (characteristic polynomial) have a representation
\begin{align}\label{eq:form}
\A[D]=\sum_{|\alpha|=k}\A_{\alpha}\partial^{\alpha},\qquad\qquad\A[\xi]=\sum_{|\alpha|=k}\xi^{\alpha}\A_{\alpha},\;\;\xi\in\R^n
\end{align}
with linear maps $\A_\alpha\in\mathrm{Lin}(V,W)$ for all multi-indices $\alpha\in\N_0^n$ with $|\alpha|=k$, where $V,\,W$ are finite dimensional normed real vector spaces. Connecting with \eqref{eq:sobolev}, the first main objective of this work is to study inequalities of the form 
\begin{align}\label{eq:main_ineq}
	\|D^{k-1}u\|_{\lebe^q(\dif\mu)}\leq c\|\mu\|^{1/q}_{\lebe^{1,n-s}}\|\A[D] u\|_{\lebe^1(\dif\mathscr L^n)}\quad \text{ for }u\in\hold^\infty_c(\R^n,V),
\end{align}
where $0\leq s<1$ and $q=\frac{n-s}{n-1}$. Note that, by the celebrated {Ornstein} Non-inequality \cite{Ornstein,KK,KSW,KW,CFM}, there is \emph{no} constant $c>0$ such that $\|D^{k}u\|_{\lebe^{1}}\leq c\|\A[D]u\|_{\lebe^{1}}$, except in trivial cases. Hence even for $k=1$, \eqref{eq:main_ineq} is not a consequence of \eqref{eq:sobolev}.

The framework of \eqref{eq:form}  is particularly motivated by earlier examples of Sobolev-type inequalities for Hodge systems \cite{BB3,LaSt,VSforms} characterised in \cite{VS}, the study of lower semi-continuity for variational problems of linear growth \cite{BDG}, and applications in plasticity, fracture mechanics and image reconstruction \cite{AnGi,StTe,ChCr,DaFoLi}. To contexualise our objective, we first recall from \cite[Thm.~1]{Hoermander}, \cite[Def.~1.7.1]{Spencer}, \cite[Def.~26]{DGK}, \cite[Def.~1.1]{VS} that $\A[D]$ is called (overdetermined) \emph{elliptic} provided the symbol map $\A[\xi]\colon V\to W$ is injective for all $\xi\in\R^{n}\setminus\{0\}$. The ellipticity assumption is a standard condition in the context of linear coerciveness estimates. As shown by the third author in \cite[Thm.~1.3]{VS}, the following generalisation of the Sobolev-Gagliardo-Nirenberg inequality
\begin{align}\label{eq:VS}
\|D^{k-1}u\|_{\lebe^{\frac{n}{n-1}}}\leq c\|\A[D]u\|_{\lebe^1}\quad\text{ for }u\in\hold^\infty_c(\R^n,V)
\end{align}
holds if and only if $\A[D]$ is elliptic \emph{and cancelling}. Here, we say that the operator $\A[D]$ given by \eqref{eq:form} is \emph{cancelling} if and only if 
\begin{align}\tag{C}\label{eq:cancelling}
\bigcap_{\xi\in\R^{n}\setminus\{0\}}\mathrm{im\,}\A[\xi]=\{0\}.
\end{align}
The class of elliptic and cancelling operators is a strict subclass of the elliptic operators, as can be seen by the derivative on \(\R\), at the Wirtinger derivatives \(\frac{1}{2}(\partial_1 + i \partial_2)\) on $\R^2 \simeq \C$ or the Laplacian \(-\Delta\) on \(\R^n\). As the first main result of this work, we establish that ellipticity and cancellation are necessary and sufficient for \eqref{eq:main_ineq} to hold:
\begin{theorem}\label{thm:int}
	Let $n\geq 2$, $k\geq 1$ and $\A[D]$ be as in \eqref{eq:form}. Moreover, let $0\leq s<1$ and $q:=\frac{n-s}{n-1}$. Then the following are equivalent: 
	\begin{enumerate}
		\item $\A[D]$ is elliptic and cancelling.
		\item There exists a constant $c>0$ such that for all $u\in\hold_{c}^{\infty}(\R^{n},V)$ and all positive Borel measures $\mu$ on $\R^n$ there holds 
		\begin{align*}
		\|D^{k-1}u\|_{\lebe^q(\dif\mu)}\leq c\|\mu\|^{1/q}_{\lebe^{1,n-s}}\|\A[D]u\|_{\lebe^1(\dif\mathscr{L}^n)}.
		\end{align*}
	\end{enumerate}
\end{theorem}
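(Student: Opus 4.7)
The implication (b) $\Rightarrow$ (a) is immediate by specialisation: taking $\mu=\mathscr{L}^n$, which satisfies $\|\mathscr{L}^n\|_{\lebe^{1,n}}<\infty$, yields $s=0$ and $q=n/(n-1)$, and (b) reduces to the Sobolev--Van Schaftingen estimate \eqref{eq:VS}. By \cite[Thm.~1.3]{VS}, this forces $\A[D]$ to be elliptic and cancelling.

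\textbf{Sufficiency.} For (a) $\Rightarrow$ (b), my plan is to combine the convolution representation furnished by ellipticity with a scale-by-scale argument that exploits cancellation and the Morrey control on $\mu$. Ellipticity provides a left fundamental solution $G\in\hold^\infty(\R^n\setminus\{0\};\lin(W,V))$, homogeneous of degree $k-n$ (with a logarithmic correction when $k\geq n$), such that $u=G*\A[D]u$ on $\hold_c^\infty(\R^n,V)$. Differentiating gives the representation $D^{k-1}u=\mathbf K *\A[D]u$ with $\mathbf K:=D^{k-1}G$ smooth away from the origin and satisfying $|\mathbf K(x)|\leq C|x|^{-(n-1)}$. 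One then needs to establish
\begin{equation*}
\|\mathbf K*f\|_{\lebe^q(\dif\mu)}\leq c\,\|\mu\|_{\lebe^{1,n-s}}^{1/q}\|f\|_{\lebe^1}
\qquad\text{for }f=\A[D]u,\;u\in\hold_c^\infty(\R^n,V).
\end{equation*}
I would prove this by a dyadic decomposition $\mathbf K=\sum_j\mathbf K_j$, with $\mathbf K_j$ supported in an annulus of radius $\sim 2^j$. On each annulus, cancellation \eqref{eq:cancelling} is what permits local $\lebe^{n/(n-1)}(\dif\mathscr{L}^n)$-control of $\mathbf K_j*f$ of the same strength as in the proof of \eqref{eq:VS}; H\"older's inequality coupled with the Morrey bound $\mu(B_r)\leq\|\mu\|_{\lebe^{1,n-s}}r^{n-s}$ then converts this into $\lebe^q(\dif\mu)$-control at scale $2^j$. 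The resulting geometric series in $j$ converges precisely because $s<1$.

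\textbf{Main obstacle.} The principal difficulty is the $\lebe^1$ endpoint: Adams' original route via Riesz-potential bounds is blocked by Ornstein's non-inequality, and weak-type bounds on $I_1\colon\lebe^1\to\lebe^{n/(n-1),\infty}$ are too weak. Cancellation \eqref{eq:cancelling} is the exact structural condition restoring the endpoint at the level of the kernel, and its quantitative interplay with the Morrey scaling $r^{n-s}$ is what both closes the dyadic summation and pins down the admissible range $s<1$ (for $s=1$, additional restrictions on $\A[D]$ are expected, consistent with the remark in the introduction). A secondary technical obstacle is that $\A[D]$ does not commute with multiplication by cutoffs, so localisation must be performed on the kernel side $\mathbf K=\sum_j\mathbf K_j$ rather than on $f$ itself; this dictates working with the convolution representation above rather than trying to patch together local versions of \eqref{eq:VS} for $u$ directly.
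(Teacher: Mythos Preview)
Your necessity argument has a genuine gap. In the statement, $s\in[0,1)$ is \emph{fixed}; you are given (b) for that particular $s$ and must deduce (a). When $s>0$ the Lebesgue measure is \emph{not} an admissible test measure, since $\|\mathscr{L}^n\|_{\lebe^{1,n-s}}=\sup_{B}\mathscr{L}^n(B)/r(B)^{n-s}=c_n\sup_{r>0}r^{s}=\infty$. So you cannot ``yield $s=0$'' by choosing $\mu=\mathscr{L}^n$; the reduction to \eqref{eq:VS} only goes through in the special case $s=0$. The paper handles general $s\in[0,1)$ by constructing, inside any prescribed cone, a self-similar set $\Sigma$ with $\mathscr{H}^{n-s}(\Sigma\cap B(0,r))\sim r^{n-s}$, and then exhibiting explicit functions (plane waves for the failure of ellipticity, the fundamental solution $\A[D]u=\delta_0 w$ for the failure of cancellation) whose $(k-1)$-st derivatives have infinite $\lebe^q$-norm against $\mu=\mathscr{H}^{n-s}\mres\Sigma$ while $\|\A[D]u\|_{\lebe^1}$ stays finite. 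This is the extra work your argument is missing.

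For sufficiency your route is genuinely different from the paper's and is, at the level of the sketch, plausible but incomplete. The paper does not decompose the kernel dyadically; instead it writes $D^{k-1}u=I_\alpha I_{-\alpha}D^{k-1}u$ for a suitable $\alpha\in\bigl(s\tfrac{n-1}{n-s},1\bigr)$, applies Adams' trace inequality \eqref{eq:adams} directly (this is where $\mu$ enters, at exponent $p=\tfrac{n}{n-1+\alpha}>1$), and then controls $\|I_{-\alpha}D^{k-1}u\|_{\lebe^p}\sim\|u\|_{\dot\trli{}^{k-1+\alpha}_{p,2}}$ by $\|\A[D]u\|_{\lebe^1}$ via the Triebel--Lizorkin endpoint estimate for elliptic and cancelling operators. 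This sidesteps the step in your outline that worries me: you propose to pass from ``local $\lebe^{n/(n-1)}(\dif\mathscr{L}^n)$-control of $\mathbf K_j*f$'' to ``$\lebe^q(\dif\mu)$-control'' by H\"older and the Morrey bound, but these are norms with respect to \emph{different} measures, and H\"older alone does not mediate between them---some trace-type inequality is exactly what is needed, which risks circularity. The crude alternative, using $\|\mathbf K_j*f\|_\infty\lesssim 2^{-j(n-1)}\|f\|_1$ together with $\mu(\spt(\mathbf K_j*f))\lesssim 2^{j(n-s)}$, yields terms of size $\sim 1$ since $q(n-1)=n-s$, so the series does not sum. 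If your dyadic scheme is to work you must say precisely how cancellation produces the extra decay (in $j$) beyond the pointwise kernel bound.
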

As for comparison with \eqref{eq:adams}, it is not difficult to adapt our method to obtain that, if $0\leq s< \alpha<n$, $q=\frac{n-s}{n-\alpha}$, and $\A[D]$ is elliptic, we have that $\A[D]$ is cancelling if and only if
\begin{align}\label{eq:adamsformula1}
	\|I_\alpha \A[D]u\|_{\lebe^q(\dif\mu)}\leq c\|\mu\|^{1/q}_{\lebe^{1,n-s}}\|\A[D]u\|_{\lebe^1}, \quad\text{ for }u\in\hold^\infty_c(\R^n,V).
\end{align}
The inequality \eqref{eq:adamsformula1} seems to be the first systematic generalisation of \textsc{Adams}' trace inequality \eqref{eq:adams} to the case $p=1$. Let us note that subject to ellipticity and cancellation, suitable estimates on lower order derivatives can equally be obtained, see Proposition~\ref{prop:suff_EC}.

It is important to mention that by {Ornstein}'s Non-inequality, the inequality of Theorem~\ref{thm:int} is a strict improvement of \eqref{eq:sobolev}. For comparison, in the case where $1<p<n$, the corresponding claim of Theorem~\ref{thm:int} is rather straightforward as ellipticity of $\A[D]$ suffices to reduce the analogue of \eqref{eq:main_ineq} to \eqref{eq:sobolev} by the {H\"{o}rmander-Mihlin} multiplier theorem; see Lemma~\ref{lem:WAptracedefine} for the quick argument. Also note that in view of Theorem~\ref{thm:main2} below, Theorem~\ref{thm:int} is optimal in the sense that \emph{it does not extend to $s=1$}, as we will discuss later. Despite having singled out Theorem~\ref{thm:int} for future reference, it can be sharpened and is implied by the following multiplicative trace inequality, which is in the spirit of \cite[Sec.~1.4.7]{mazya}:
\begin{theorem}\label{thm:main1}
In the situation of Theorem~\ref{thm:int}, the following are equivalent: 
\begin{enumerate}
	\item\label{itm:thm1.1_b} $\A[D]$ is elliptic and cancelling.
	\item\label{itm:thm1.1_a} Let $s\frac{n-1}{n-s}<\theta\leq 1$. Then there exists a constant $c>0$ such that for all $u\in\hold_{c}^{\infty}(\R^{n},V)$ and all positive Borel measures $\mu$ on $\R^n$ there holds 
\begin{align}\label{eq:multiplicativetraceinequality}
\|D^{k-1}u\|_{\lebe^q(\dif\mu)}\leq c\|\mu\|^{1/q}_{\lebe^{1,n-s}}\|D^{k-1}u\|^{1-\theta}_{\lebe^{\frac{n}{n-1}}(\dif\mathscr{L}^n)}\|\A[D]u\|^\theta_{\lebe^1(\dif\mathscr{L}^n)}.
\end{align}
\end{enumerate}
\end{theorem}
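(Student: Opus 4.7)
The direction (b) $\Rightarrow$ (a) is immediate: since $s<1$, we have $s(n-1)/(n-s)<1$, so $\theta=1$ is admissible in~\eqref{eq:multiplicativetraceinequality}. At $\theta=1$ the multiplicative inequality reduces to the additive trace estimate of Theorem~\ref{thm:int}, whose equivalence with ellipticity and cancellation is asserted there.

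For the direction (a) $\Rightarrow$ (b), assume $\A[D]$ is elliptic and cancelling, and set $F:=D^{k-1}u$, $f:=\A[D]u$, $E_t:=\{|F|>t\}$, and $v(t):=|E_t|$. Theorem~\ref{thm:int} and the Van Schaftingen inequality~\eqref{eq:VS} supply the two endpoint estimates
\[
\|F\|_{\lebe^q(\dif\mu)}\le c\|\mu\|^{1/q}_{\lebe^{1,n-s}}\|f\|_{\lebe^1}\qquad\text{and}\qquad\|F\|_{\lebe^{n/(n-1)}}\le c\|f\|_{\lebe^1}.
\]
The multiplicative refinement is then derived through a Maz'ya-type layer-cake argument (cf.\ \cite[\S1.4.7]{mazya}). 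The pivotal ingredient is the mixed Morrey estimate
\[
\mu(E)\le c\|\mu\|_{\lebe^{1,n-s}}\,\capa_\A(E)^{s}\,v(E)^{1-s},
\]
valid for open $E\subset\R^n$, where $\capa_\A$ denotes an $\A$-adapted $1$-capacity. This follows by covering $E$ with balls $\{B_i\}$ of radii $\{r_i\}$, bounding $\mu(B_i)\le\|\mu\|_{\lebe^{1,n-s}}r_i^{n-s}$, applying the elementary Hölder inequality
\[
\sum_i r_i^{n-s}\le\Bigl(\sum_i r_i^{n-1}\Bigr)^{s}\Bigl(\sum_i r_i^n\Bigr)^{1-s},
\]
and choosing a Whitney-type covering which realises $\sum_i r_i^{n-1}\lesssim\capa_\A(E)$ and $\sum_i r_i^n\asymp v(E)$ simultaneously.

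Plugging this bound into the layer-cake representation $\|F\|^q_{\lebe^q(\dif\mu)}=q\int_0^\infty t^{q-1}\mu(E_t)\dt$ and applying Hölder in $t$ with the conjugate exponents $(1/s,1/(1-s))$, while noting the identity $(q-1)/(1-s)=1/(n-1)$, yields
\[
\|F\|^q_{\lebe^q(\dif\mu)}\le c\|\mu\|_{\lebe^{1,n-s}}\Bigl(\int_0^\infty\capa_\A(E_t)\dt\Bigr)^{s}\Bigl(\int_0^\infty t^{1/(n-1)}v(t)\dt\Bigr)^{1-s}.
\]
By layer-cake the second factor equals $c\|F\|^{n(1-s)/(n-1)}_{\lebe^{n/(n-1)}}$, while the first is dominated by $c\|f\|^s_{\lebe^1}$ via the $\A$-capacitary coarea estimate $\int_0^\infty\capa_\A(E_t)\dt\le c\|f\|_{\lebe^1}$, an integrated form of~\eqref{eq:VS}. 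Rearranging yields~\eqref{eq:multiplicativetraceinequality} at the threshold $\theta_0:=s(n-1)/(n-s)$; the full range $\theta_0<\theta\le 1$ is then recovered by monotonicity, using~\eqref{eq:VS} to exchange factors of $\|F\|_{\lebe^{n/(n-1)}}$ for factors of $\|f\|_{\lebe^1}$.

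The main technical obstacle is the $\A$-capacitary coarea estimate $\int_0^\infty\capa_\A(E_t)\dt\le c\|f\|_{\lebe^1}$. In the scalar case $\A[D]=D$ this is the classical BV coarea formula applied to $|F|$, but for vectorial $F=D^{k-1}u$ Ornstein's non-inequality precludes any reduction to the scalar coarea of $|F|$. One must instead exploit the cancellation structure of $\A[D]$ directly, essentially by applying~\eqref{eq:VS} to a family of smooth level-set truncations of $u$ and integrating in the level parameter—this is where the full strength of ellipticity and cancellation is used.
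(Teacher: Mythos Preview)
Your reduction \ref{itm:thm1.1_a}$\Rightarrow$\ref{itm:thm1.1_b} by setting $\theta=1$ matches the paper's argument exactly.

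For \ref{itm:thm1.1_b}$\Rightarrow$\ref{itm:thm1.1_a}, however, your Maz'ya-type capacitary route has a genuine gap that you identify but do not close: the $\A$-capacitary coarea estimate
\[
\int_0^\infty \capa_\A(\{|D^{k-1}u|>t\})\dt \le c\,\|\A[D]u\|_{\lebe^1}.
\]
In the scalar case $\A[D]=D$, $k=1$, this is the $\bv$ coarea formula. For vectorial $u$ and general $\A[D]$ no analogue is available, and your proposed fix---``applying~\eqref{eq:VS} to a family of smooth level-set truncations of $u$''---does not work. Truncating $u$ on a superlevel set of $|D^{k-1}u|$ produces, via Leibniz, cross-terms between the cutoff gradient and lower-order derivatives of $u$; Ornstein's non-inequality blocks any attempt to dominate these by $\|\A[D]u\|_{\lebe^1}$ after integration in the level parameter. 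The paper flags precisely this lack of a coarea tool as the reason the endpoint $s=1$ of Theorem~\ref{thm:int} is known only for $\A[D]=D^k$ (see the footnote in \S1.1). The same problem already appears one step earlier: the boxing-type bound $\sum_i r_i^{n-1}\lesssim\capa_\A(E)$ presupposes a workable definition of $\capa_\A$ with vector-valued competitors, and it is not clear how to even formulate ``$D^{k-1}\phi\ge 1$ on $E$'' for $V$-valued $\phi$, let alone prove a Gustin-type boxing inequality for it.

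The paper's proof avoids level sets entirely. It writes $D^{k-1}u=I_\alpha(I_{-\alpha}D^{k-1}u)$ for $\alpha\in\bigl(s\tfrac{n-1}{n-s},\theta\bigr)$, applies Adams' trace inequality~\eqref{eq:adams} to land in $\lebe^{p}$ with $p=\tfrac{n}{n-1+\alpha}>1$, uses H\"ormander--Mihlin to identify $\|I_{-\alpha}D^{k-1}u\|_{\lebe^p}$ with the Triebel--Lizorkin norm $\|u\|_{\dot{\trli}{^{k-1+\alpha}_{p,2}}}$, and then interpolates between $\dot{\trli}{^{k-1}_{n/(n-1),2}}$ and $\dot{\trli}{^{k-1+\alpha/\theta}_{n/(n-1+\alpha/\theta),2}}$ (Lemma~\ref{lem:interpolation}). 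The first interpolant is $\|D^{k-1}u\|_{\lebe^{n/(n-1)}}$; the second is bounded by $\|\A[D]u\|_{\lebe^1}$ via Van Schaftingen's Triebel--Lizorkin embedding (Proposition~\ref{prop:TriebelLizorkin}). The cancellation condition enters only through this last embedding, and no decomposition of $u$ along level sets of $|D^{k-1}u|$ is ever needed.
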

The theorem is equally optimal in the sense that \emph{no such multiplicative inequality can hold for} $s=1$. Indeed, as one of the main points of the paper, we shall establish that the specific multiplicative form of \eqref{eq:multiplicativetraceinequality} directly translates to the so-called $\A$-strict continuity of the trace operator associated with $\mu$. On the other hand, for $s=1$ the trace operator \emph{is never  $\A$-strictly continuous}, cf. Remark~\ref{rem:nonstrictlycontinuous} and Section~\ref{sec:BVAspaces} for the requisite terminology. In particular, we see that the fact that the range of $\theta$ in \ref{itm:thm1.1_a} is empty for $s=1$ is phenomenological.

At the endpoint $s=1$, Theorem~\ref{thm:int} cannot be generalised by easy means\footnote{In fact, to the best of our knowledge, Theorem~\ref{thm:int} is only known in the limiting case $s=1$ for $\A[D]=D^k$, which follows from the work of Meyers and Ziemer \cite{MeZi}. There, the coarea formula is crucially used and there seems to be no simple replacement of this tool for other operators $\A[D]$.}. Below we show that the class of operators admitting the suitable endpoint estimate is, in general, \emph{strictly} smaller than the class of elliptic and cancelling operators
. As a metaprinciple, lower codimensions $0<s<1$ require weaker conditions on $\A[D]$ for the respective trace inequalities to hold whereas the \emph{borderline case} $s=1$ necessarily requires stronger conditions on $\A[D]$. 
\begin{proposition}\label{prop:int_tr_implies_En-2canc}
Let $\A[D]$ be as in \eqref{eq:form}. Suppose that for every $(n-1)$-dimensional hyperplane $\Sigma\subset\R^{n}$ there exists a constant $c>0$ such that for all $u\in\hold_{c}^{\infty}(\R^{n},V)$ there holds 
\begin{align}\label{eq:int_tr_k}
\|D^{k-1}u\|_{\lebe^1(\Sigma;\dif\mathscr{H}^{n-1})} \leq c \|\A[D]u\|_{\lebe^1(\R^n;\dif\mathscr{L}^{n})}. 
\end{align}
Then $\A[D]$ is elliptic and satisfies the \emph{strong cancellation} condition
\begin{align}\tag{SC}\label{eq:n-2_canc_intro}
\bigcap_{\xi\in H\setminus\{0\}}\mathrm{im\,}\A[\xi]=\{0\}\text{ for any subspace $H\leq \R^n$ with $\dim H=2$.}
\end{align}
\end{proposition}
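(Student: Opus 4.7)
The plan is to establish ellipticity and the strong cancellation condition \eqref{eq:n-2_canc_intro} separately, each by exhibiting a sequence of test functions in $\hold_c^\infty(\R^n, V)$ that contradicts \eqref{eq:int_tr_k}.

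For \emph{ellipticity}, suppose to the contrary that $\A[\xi_0]v_0 = 0$ for some $\xi_0 \in \R^n \setminus \{0\}$ and $v_0 \in V \setminus \{0\}$. Taking the hyperplane $\Sigma := \{x : \xi_0 \cdot x = 0\}$, I would test the inequality against the plane-wave family
\begin{equation*}
u_\epsilon(x) := \epsilon^{k-1} v_0 \, g(\xi_0 \cdot x / \epsilon) \, \eta(x),
\end{equation*}
where $g \in \hold_c^\infty(\R)$ is chosen with $g^{(k-1)}(0) \neq 0$ and $\eta \in \hold_c^\infty(\R^n)$ is a fixed bump. Since $\A[\xi_0]v_0 = 0$, the principal part of $\A[D]u_\epsilon$ (where every derivative falls on $g(\xi_0\cdot x/\epsilon)$) collapses; the remaining commutator terms carry at least one derivative of $\eta$ and are supported in the $\epsilon$-thin slab $\{|\xi_0 \cdot x| \lesssim \epsilon\}$, hence $\|\A[D] u_\epsilon\|_{\lebe^1(\R^n)} = O(\epsilon)$. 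On $\Sigma$, however, $\xi_0 \cdot x \equiv 0$, so the leading term of $D^{k-1} u_\epsilon|_\Sigma$ equals $v_0 \otimes \xi_0^{\otimes (k-1)} g^{(k-1)}(0)\, \eta|_\Sigma$, which is independent of $\epsilon$, giving a uniform positive lower bound on $\|D^{k-1} u_\epsilon\|_{\lebe^1(\Sigma;\, \dif\mathscr{H}^{n-1})}$ and contradicting \eqref{eq:int_tr_k}.

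For \emph{strong cancellation}, I argue by contradiction. Suppose that there exist a two-dimensional subspace $H \leq \R^n$ and $w \in V \setminus \{0\}$ with $w \in \image \A[\xi]$ for every $\xi \in H \setminus \{0\}$. Choosing coordinates so that $H = \Span(e_1, e_2)$, introduce the two-dimensional operator $\mathcal{B}[D]$ on $\R^2$ with symbol $\mathcal{B}[\eta_1, \eta_2] := \A[(\eta_1, \eta_2, 0, \ldots, 0)]$; this operator is elliptic (by the ellipticity of $\A[D]$) but fails to be cancelling in dimension $2$, since $w \in \bigcap_{\eta \in \R^2 \setminus \{0\}} \image \mathcal{B}[\eta]$. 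The first stage is to reduce \eqref{eq:int_tr_k} to an analogous 2D trace inequality for $\mathcal{B}[D]$ by lifting: for $U \in \hold_c^\infty(\R^2, V)$ and a fixed bump $\rho \in \hold_c^\infty(\R^{n-2})$, the family
\begin{equation*}
u_R(x) := U(x_1, x_2) \, \rho((x_3, \ldots, x_n)/R)
\end{equation*}
satisfies, by a Leibniz expansion, $\A[D] u_R = (\mathcal{B}[D] U)(x_1, x_2)\, \rho((x_3, \ldots, x_n)/R) + O(R^{-1})$-commutators, the remainder carrying at least one derivative of $\rho$. Testing \eqref{eq:int_tr_k} on $\Sigma = \{x_2 = 0\}$, dividing by $R^{n-2}$, and letting $R \to \infty$ produces
\begin{equation*}
\|(D^{k-1} U)|_{\{x_2 = 0\}}\|_{\lebe^1(\R)} \leq c \, \|\mathcal{B}[D] U\|_{\lebe^1(\R^2)} \quad \text{for all } U \in \hold_c^\infty(\R^2, V);
\end{equation*}
rotating the line within $H$ yields the analogous inequality for traces on any line through the origin in $\R^2$.

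The second stage is to falsify this 2D inequality using the fundamental solution $\Gamma$ of $\mathcal{B}[D]$ with charge $w$, i.e., $\mathcal{B}[D]\Gamma = w\delta_0$, characterised by $\hat\Gamma(\xi) = \mathcal{B}[\xi]^{+} w$ for a measurable selection of a right-inverse. The $(-k)$-homogeneity of $\hat\Gamma$ gives $(k-2)$-homogeneity of $\Gamma$ (with a $\log|x|$ correction when $k=2$) and $(-1)$-homogeneity of $D^{k-1}\Gamma$. Forming the mollified cut-off $\tilde U_\delta := \chi (\phi_\delta \ast \Gamma) \in \hold_c^\infty(\R^2, V)$ with $\phi_\delta$ a standard mollifier and $\chi \equiv 1$ near the origin, the identity $\mathcal{B}[D](\phi_\delta \ast \Gamma) = w\phi_\delta$ and the fact that the commutator $[\mathcal{B}[D], \chi](\phi_\delta \ast \Gamma)$ is supported away from $0$ keep $\|\mathcal{B}[D]\tilde U_\delta\|_{\lebe^1(\R^2)}$ bounded uniformly in $\delta$; along a suitably chosen line $\ell$ through the origin, however, the $|x|^{-1}$-size of $D^{k-1}\Gamma$ survives mollification at scales $|x| \gtrsim \delta$ and forces $\|(D^{k-1}\tilde U_\delta)|_\ell\|_{\lebe^1(\R)} \gtrsim \log(1/\delta) \to \infty$, a contradiction. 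The main obstacle is this second stage: one must verify that the angular profile of $D^{k-1}\Gamma$ on $\mathbb{S}^1$ does not vanish identically (possibly after rotating $H$) so that a genuinely singular direction can be selected, and control the cutoff commutator so that the logarithmic divergence is preserved.
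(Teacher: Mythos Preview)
Your proposal is correct in outline and reaches the same conclusion as the paper, but the packaging differs in both parts.

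For \emph{ellipticity}, the paper uses a fixed singular profile $g(t)=|t|^{k-3/2}$ (so that $g^{(k-1)}\notin\lebe^1_{\locc}$), cuts off, and mollifies. Your scaling construction with a compactly supported smooth $g$ is cleaner and avoids the mollification step entirely; both arguments exploit $\A[\xi_0]v_0=0$ to kill the principal term, and the thin-slab support of $g(\xi_0\cdot x/\epsilon)$ to make the remainder $O(\epsilon)$ in $\lebe^1$.

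For \emph{strong cancellation}, the paper does not reduce to two dimensions. Instead it builds directly in $\R^n$ a distribution $u$ depending only on the $H$-component of $x$, with $\A[D]u=(\mathscr{H}^{n-2}\mres H^\perp)w$ and $D^{k-1}u(g+h)=F(h)$ for a $(-1)$-homogeneous $F\in\hold^\infty(H\setminus\{0\})$ (Lemma~\ref{lem:aux_nec_nu} with $d=2$). One then picks $\eta\in H\cap\mathbb{S}^{n-1}$ with $F(\eta)\neq0$, sets $\Sigma=\nu^\perp$ for $\nu\in H$ independent of $\eta$, and computes $\int_{\Sigma\cap Q}|D^{k-1}u|\,\dif\mathscr{H}^{n-1}=|F(\eta)|\int_0^1 t^{-1}\,\dif t=\infty$ after a cut-off. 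Your two-stage route---lift $U(x_1,x_2)\rho(x'/R)$ to deduce a 2D trace inequality for $\mathcal{B}$, then contradict via the 2D fundamental solution $\Gamma$---is equivalent: your $\Gamma$ is precisely the paper's $u$ viewed on $H\simeq\R^2$, and your line $\ell$ corresponds to $\Sigma\cap H$. The reduction buys you a transparently two-dimensional homogeneity computation at the cost of the extra Leibniz/limit argument in $R$.

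Two small points. First, $w\in W\setminus\{0\}$, not $V\setminus\{0\}$, since $w\in\image\A[\xi]\subset W$. Second, the ``obstacle'' you flag at the end is handled exactly as in the paper: if the angular profile of $D^{k-1}\Gamma$ vanished identically on $\mathbb{S}^1$, then $\Gamma$ would be a polynomial of degree at most $k-2$, hence $\mathcal{B}[D]\Gamma=0$, contradicting $\mathcal{B}[D]\Gamma=w\delta_0$ with $w\neq0$. Thus some $\eta\in\mathbb{S}^1$ with $D^{k-1}\Gamma(\eta)\neq0$ exists, and you take $\ell=\R\eta$ (equivalently, $\Sigma=H^\perp\oplus\R\eta$ in $\R^n$). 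The cutoff commutator is supported in an annulus where $\phi_\delta*\Gamma\to\Gamma$ in $\hold^\infty$, so it stays bounded in $\lebe^1$ uniformly in $\delta$; the $\log(1/\delta)$ divergence on $\ell$ then survives.
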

Obviously, condition~\eqref{eq:n-2_canc_intro} reduces to cancellation if $n=2$. For $n>2$, to see that condition~\eqref{eq:n-2_canc_intro} is strictly stronger than cancellation in the class of elliptic operators, one considers an elliptic operator $\mathbb{B}_1[D]$ on $\R^2$ from $\R^2$ to $\R^2$ (e.g., $\mathbb{B}_1[D]=\Delta^{k/2}$ if $k$ is even and $\mathbb{B}_1[D]=\Delta^{(k-1)/2}\circ(\di,\,\mathrm{curl})$ if $k$ is odd) and sets
\begin{align*}
\A[D]=\left(
\begin{matrix}
\mathbb{B}_1[D]&0\\
0&\mathbb{B}_2[D]
\end{matrix}
\right),
\end{align*}
where $\mathbb{B}_2[D]$ is a $k$-th order elliptic and cancelling operator on $\R^{n-2}$ (e.g., the $k$-th gradient). We shall give an explicit example in Example~\ref{ex:ESCnotCell}. Condition~\eqref{eq:n-2_canc_intro} first appeared in the context of differential operators in the second author's technical report \cite[Sec.~5.2,~Prop.~2.6]{Raita18}. Similar ideas appeared much earlier in \cite[Thm.~3]{RW} in connection with dimensional estimates for measures satisfying certain restrictions in Fourier space (see also the recent \cite[Def.~1.4]{AW}, which is related to our context via the substitution $\phi(\xi)=\mathrm{im\,}\A[\xi]$). As for a possible converse of Proposition~\ref{prop:int_tr_implies_En-2canc}, we demonstrate that this is true indeed provided that the operator $\A[D]$ is \emph{of first order}:
\begin{theorem}\label{thm:main2}
Let $\A[D]$ be a \emph{first order} operator as in \eqref{eq:form}. Then the following are equivalent: 
\begin{enumerate}
\item\label{it:int_k=1_est} For every $(n-1)$-dimensional hyperplane $\Sigma\subset\R^{n}$ there exists a constant $c>0$ such that for all $u\in\hold_{c}^{\infty}(\R^{n},V)$ there holds 
\begin{align*}
\|u\|_{\lebe^1(\Sigma;\dif\mathscr{H}^{n-1})} \leq c \|\A[D]u\|_{\lebe^1(\R^n;\dif\mathscr{L}^{n})}. 
\end{align*}
\item\label{it:int_k=1_En-2C} $\A[D]$ is elliptic and \eqref{eq:n-2_canc_intro} holds.
\item\label{it:int_k=1_est_strong} For every $(n-1)$-dimensional hyperplane $\Sigma\subset\R^{n}$ there exists a constant $c>0$ such that for all $u\in\hold_{c}^{\infty}(\R^{n},V)$ there holds 
\begin{align*}
\|u\|_{\lebe^1(\Sigma;\dif\mathscr{H}^{n-1})} \leq c \|\A[D]u\|_{\lebe^1(\Sigma^+;\dif\mathscr{L}^{n})}. 
\end{align*}
Here $\Sigma^+$ is a half-space with boundary $\Sigma$.
\item\label{it:int_k=1_Cell} $\A[D]$ is \emph{$\mathbb{C}$-elliptic}, i.e., $\ker_{\mathbb{C}}\A[\xi]=\{0\}$ for all $\xi\in\mathbb{C}^n\zeroset$.
\end{enumerate}
\end{theorem}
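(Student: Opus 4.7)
The plan is to close the cycle $(c)\Rightarrow(a)\Rightarrow(b)\Rightarrow(d)\Rightarrow(c)$. The first implication is immediate from $\|\A[D]u\|_{\lebe^{1}(\Sigma^{+};\dif\mathscr{L}^{n})}\leq\|\A[D]u\|_{\lebe^{1}(\R^{n};\dif\mathscr{L}^{n})}$, and the second is precisely Proposition~\ref{prop:int_tr_implies_En-2canc} specialised to $k=1$.

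For $(b)\Rightarrow(d)$ I argue by contrapositive. Suppose $\A[D]$ fails to be $\C$-elliptic, so that there exist $\xi=\eta+\imag\zeta\in\C^{n}\zeroset$ and $v=v_{1}+\imag v_{2}\in V_{\C}\zeroset$ with $\A[\xi]v=0$. Real ellipticity (which is part of (b)) forces $\eta$ and $\zeta$ to be $\R$-linearly independent, hence to span a $2$-plane $H\subset\R^{n}$. Splitting $\A[\xi]v=0$ into real and imaginary parts gives
\begin{align*}
\A[\eta]v_{1}=\A[\zeta]v_{2}\qquad\text{and}\qquad\A[\eta]v_{2}=-\A[\zeta]v_{1},
\end{align*}
after which a short trigonometric expansion using both identities verifies
\begin{align*}
\A\bigl[\cos t\,\eta+\sin t\,\zeta\bigr]\bigl(\cos t\,v_{1}+\sin t\,v_{2}\bigr)=\A[\eta]v_{1}\quad\text{for every }t\in\R.
\end{align*}
As $\A[\eta]v_{1}\neq 0$ by real injectivity, the nonzero vector $\A[\eta]v_{1}$ lies in $\bigcap_{\xi\in H\zeroset}\image\A[\xi]$, contradicting~\eqref{eq:n-2_canc_intro}.

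The analytic heart of the theorem is $(d)\Rightarrow(c)$. My strategy is to reduce the half-space inequality to a full-space one by means of an $\A$-compatible extension operator $E\colon\hold_{c}^{\infty}(\overline{\Sigma^{+}},V)\to\hold_{c}^{\infty}(\R^{n},V)$ satisfying $(Eu)|_{\Sigma^{+}}=u$ and the quantitative bound
\begin{align*}
\|\A[D](Eu)\|_{\lebe^{1}(\R^{n};\dif\mathscr{L}^{n})}\leq c\|\A[D]u\|_{\lebe^{1}(\Sigma^{+};\dif\mathscr{L}^{n})}.
\end{align*}
$\C$-ellipticity of the first-order operator $\A[D]$ is precisely what permits the construction of such an extension: the Smith form of the symbol $\A[\xi]$ (which has full column rank at every $\xi\in\C^{n}\zeroset$) is used to tune the coefficients of a Stein--Whitney higher-order reflection across $\Sigma$ so that the distributional jump of $\A[D](Eu)$ across $\Sigma$ vanishes. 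Combined with the full-space $s=1$ trace inequality $\|w|_{\Sigma}\|_{\lebe^{1}(\Sigma;\dif\mathscr{H}^{n-1})}\leq c\|\A[D]w\|_{\lebe^{1}(\R^{n};\dif\mathscr{L}^{n})}$ available for $\C$-elliptic first-order operators (via the $\C$-elliptic $\lebe^{1}$-Korn inequality together with Gagliardo's classical $\sobo^{1,1}$ trace argument) and applied to $w=Eu$, this chain yields precisely (c).

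The main obstacle is the construction of the extension $E$: naive even or odd reflections produce a singular jump in $\A[D](Eu)$ on $\Sigma$, so the reflection coefficients must be precisely dictated by the symbol. $\C$-ellipticity is the sharp algebraic condition that makes this possible, as it is equivalent to the boundary operators attached to $\A[D]$ on $\Sigma$ forming an elliptic boundary-value system for which a bounded extension in $\lebe^{1}$ exists; the equivalence $(b)\Leftrightarrow(d)$ established above identifies this condition with strong cancellation on $2$-planes, matching exactly what Proposition~\ref{prop:int_tr_implies_En-2canc} already shows is necessary.
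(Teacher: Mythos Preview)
Your cycle $(c)\Rightarrow(a)\Rightarrow(b)\Rightarrow(d)$ matches the paper's route exactly: $(c)\Rightarrow(a)$ is trivial, $(a)\Rightarrow(b)$ is Proposition~\ref{prop:int_tr_implies_En-2canc} with $k=1$, and your contrapositive argument for $(b)\Rightarrow(d)$ is the content of Lemma~\ref{lem:smahoz} (your trigonometric parametrisation is the paper's bilinear computation $\A[a\xi_{1}+b\xi_{2}](av_{1}+bv_{2})=(a^{2}+b^{2})\A[\xi_{1}]v_{1}$ restricted to $a^{2}+b^{2}=1$).

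The genuine gap is in $(d)\Rightarrow(c)$. Your scheme is ``extension to $\R^{n}$ plus full-space trace inequality'', and for the latter you invoke a ``$\C$-elliptic $\lebe^{1}$-Korn inequality'' followed by Gagliardo. But no such Korn inequality exists: by Ornstein's non-inequality (discussed explicitly in the introduction), there is \emph{no} estimate $\|Du\|_{\lebe^{1}(\R^{n})}\leq c\|\A[D]u\|_{\lebe^{1}(\R^{n})}$ unless $\A[D]$ already dominates the full gradient algebraically. $\C$-ellipticity does not rescue this; it gives the factorisation $D^{d}=\mathbb{B}[D]\circ\A[D]$ of Proposition~\ref{prop:smith}, which is a \emph{differential} relation, not a bounded $\lebe^{1}\to\lebe^{1}$ one. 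Consequently the full-space inequality $\|w|_{\Sigma}\|_{\lebe^{1}(\Sigma)}\leq c\|\A[D]w\|_{\lebe^{1}(\R^{n})}$ that you feed into the extension is precisely item (a), and your argument becomes circular.

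The paper's proof of $(d)\Rightarrow(c)$ (Theorem~\ref{thm:ext}) bypasses this entirely by working directly on $\Sigma^{+}$. One covers a one-sided neighbourhood of $\Sigma$ by dyadic Whitney cubes, on each cube projects $u$ onto the \emph{finite-dimensional} nullspace $\ker\A[D]$ (finite-dimensionality being exactly the usable consequence of $\C$-ellipticity via Proposition~\ref{prop:smith}), and combines the Poincar\'e inequality \cite[Prop.~4.2]{GR} with inverse estimates for polynomials to show that the piecewise-polynomial trace approximants $T_{j}u$ are Cauchy in $\lebe^{1}(\Sigma)$ with increments controlled by $\|\A[D]u\|_{\lebe^{1}}$ on successive dyadic strips. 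The extension operator you allude to does exist (\cite[Thm.~4.1]{GR}), but it only reduces $(c)$ to $(a)$, and $(a)$ must still be obtained by this local Whitney argument rather than by any $\lebe^{1}$-Korn mechanism.
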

In proving the previous theorem, we extend and augment ideas given by \textsc{Breit, Diening} and the first author in \cite{BDG}. Modifying the approach given in \cite{BDG}, it is further possible to establish \emph{for $k$-th order $\mathbb{C}$-elliptic operators} that the \emph{interior trace inequality} \eqref{eq:int_tr_k} remains valid. This is a consequence of an \emph{exterior trace inequality}, cf.~ Theorem~\ref{thm:ext} below. Hinging on the linearity of $\xi\mapsto\A[\xi]$, the previous theorem does not immediately generalise to $k$-th order operators, see Open Problem~\ref{openproblem} below. 
\subsection{Trace embeddings for function spaces}
As already alluded to above, Theorems~\ref{thm:int}-\ref{thm:main2} have counterparts in the trace theory for function spaces. Let $\A[D]$ be of the form \eqref{eq:form}. Introduced in \cite{BDG} and studied in \cite{ChCr,DaFoLi,GR,GR_diff,Raita18,R}, we recall that the space of \emph{functions of bounded $\A$-variation} is given by
\begin{align*}
	\bv^\A(\R^n):=\{u\in\sobo^{k-1,1}(\R^n,V)\colon \A[D]u\in\mathscr{M}(\R^n,W)\}.  
\end{align*}
These spaces provide a unifying treatment of well-known spaces such as $\bv(\R^{n})$ or $\bd(\R^{n})$. To connect with the theme from above, obtaining a definition of trace operator from Theorem~\ref{thm:int} in itself is not obvious and cannot be accessed by the coarea-formula approach of \textsc{Ziemer} \cite[Thm.~5.2.13]{Ziemer}. Similar to the $\bv$-case, the norm topology on $\bv^{\A}$ is not well-suited for most applications; in particular, note that elements of $\bv^{\A}(\R^{n})$ cannot be smoothly approximated in this topology. The correct substitute topology is that induced by the $\A$-strict metric $\dif_{\A}(u,v):=\|u-v\|_{\sobo^{k-1,1}(\R^{n},V)}+||\A[D]u|(\R^{n})-|\A[D]v|(\R^{n})|$. This topology not only ensures continuity of suitably convex functionals on $\bv^{\A}(\R^{n})$ (cf.~\cite{Resh,BDG}), but also admits smooth approximation of $\bv^{\A}$-maps. In this respect, the concluding main result of the present paper is the following theorem, for ease of exposition stated for first order operators.
\begin{theorem}[Traces and $\A$-strict continuity]\label{thm:strict}
In the setting of Theorem~\ref{thm:int}, let $\A[D]$ be a first order elliptic and cancelling operator and $\mu\in\lebe^{1,n-s}(\R^{n})$. Then the following hold: 
\begin{enumerate}
\item\label{itm:tr1intro} There exists a norm continuous linear trace embedding operator 
\begin{align*}
\trace_{\mu}\colon\bv^{\A}(\R^{n})\hookrightarrow \lebe^{q}(\R^{n};\dif\mu).
\end{align*}
 \item\label{itm:tr2intro} Moreover, the operator $\trace_{\mu}$ from \ref{itm:tr1intro} is $\A$-strictly continuous. 
\end{enumerate}
\end{theorem}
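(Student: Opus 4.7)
Both parts rest on the multiplicative trace inequality of Theorem~\ref{thm:main1}, exploited at different levels.

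\emph{Part~\ref{itm:tr1intro}.} For $u\in\bv^{\A}(\R^{n})$, construct a sequence $(\phi_{j})\subset\hold^{\infty}_{c}(\R^{n},V)$ with $\phi_{j}\to u$ in the $\A$-strict topology by the standard combination of mollification and truncation on growing balls. Applying Theorem~\ref{thm:main1} to the differences,
\[
\|\phi_{j}-\phi_{k}\|_{\lebe^{q}(\dif\mu)}\leq c\|\mu\|^{1/q}_{\lebe^{1,n-s}}\|\phi_{j}-\phi_{k}\|^{1-\theta}_{\lebe^{n/(n-1)}}\|\A[D](\phi_{j}-\phi_{k})\|^{\theta}_{\lebe^{1}},
\]
the first right-hand factor vanishes via the Sobolev embedding~\eqref{eq:VS} applied to the smooth difference combined with $\lebe^{1}$-convergence, while the second is uniformly bounded under $\A$-strict convergence. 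Hence $(\phi_{j})$ is Cauchy in $\lebe^{q}(\dif\mu)$; set $\trace_{\mu}u$ to be the limit, verify independence of the approximating sequence by the same estimate, and read off the norm-continuity bound by passing to the limit in Theorem~\ref{thm:int}.

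\emph{Part~\ref{itm:tr2intro}.} First pass the multiplicative inequality to $u\in\bv^{\A}(\R^{n})$ via part~\ref{itm:tr1intro}; applied to the difference of an $\A$-strictly convergent sequence $u_{j}\to u$, this gives
\[
\|\trace_{\mu}(u_{j}-u)\|_{\lebe^{q}(\dif\mu)}\leq c\|\mu\|^{1/q}_{\lebe^{1,n-s}}\|u_{j}-u\|^{1-\theta}_{\lebe^{n/(n-1)}}|\A[D](u_{j}-u)|(\R^{n})^{\theta}.
\]
The total-variation factor is uniformly bounded under $\A$-strict convergence, so the task reduces to showing $u_{j}\to u$ in $\lebe^{n/(n-1)}(\R^{n})$.

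\emph{Main obstacle.} The delicate step is to prove that $\A$-strict convergence in $\bv^{\A}(\R^{n})$ forces strong $\lebe^{n/(n-1)}$-convergence. I would combine: $\lebe^{n/(n-1)}$-boundedness from~\eqref{eq:VS}; tightness of $\{u_{j}\}$ at infinity obtained via a cutoff version of~\eqref{eq:VS} together with $|\A[D]u_{j}|(\{|x|>R\})\to|\A[D]u|(\{|x|>R\})$ for continuity radii $R$, a consequence of the Reshetnyak-type strict convergence of $|\A[D]u_{j}|$ to $|\A[D]u|$ as positive measures; and local Rellich--Kondrachov compactness in $\lebe^{p}$ for $p<n/(n-1)$ on bounded balls. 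A concentration-compactness-type dichotomy then upgrades to strong $\lebe^{n/(n-1)}$-convergence, the no-concentration case being ruled out precisely by the convergence of total variations. That the \emph{multiplicative} form of Theorem~\ref{thm:main1} is indispensable here---rather than the direct trace inequality of Theorem~\ref{thm:int}---is clear from the estimate: the fractional power $\theta<1$ lets $|\A[D](u_{j}-u)|(\R^{n})$ enter as a mere bound, which is essential because this quantity does not vanish under $\A$-strict convergence; the compensating smallness must therefore be supplied by the $\lebe^{n/(n-1)}$-factor, and this is what the concentration-compactness argument delivers.
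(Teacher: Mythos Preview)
Your overall architecture is the same as the paper's: reduce \ref{itm:tr2intro} to the multiplicative inequality plus the statement that $\A$-strict convergence in $\bv^{\A}(\R^{n})$ implies strong $\lebe^{n/(n-1)}$-convergence, and establish the latter by a concentration-compactness argument. Two points deserve attention.

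\emph{Part~\ref{itm:tr1intro}.} Your justification that $\|\phi_{j}-\phi_{k}\|_{\lebe^{n/(n-1)}}\to 0$ ``via the Sobolev embedding applied to the smooth difference combined with $\lebe^{1}$-convergence'' is not quite right as stated: the Sobolev inequality only gives boundedness of this quantity, since $\|\A[D](\phi_{j}-\phi_{k})\|_{\lebe^{1}}$ does not tend to zero under $\A$-strict convergence. For the \emph{specific} sequence of mollifications the argument is salvageable, because once $u\in\lebe^{n/(n-1)}$ (by Fatou), $\rho_{\varepsilon}*u\to u$ in $\lebe^{n/(n-1)}$ is standard. But then ``independence of the approximating sequence by the same estimate'' fails for \emph{arbitrary} $\A$-strict approximations, and you are tacitly invoking the hard step of Part~\ref{itm:tr2intro} already here. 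The paper sidesteps this by defining $\trace_{\mu}u:=u^{*}$ via the precise representative, showing first that $\mu$-a.e.\ point is a Lebesgue point of $u$ through the embedding $\bv^{\A}\hookrightarrow\sobo^{\theta,n/(n-1+\theta)}$ for $s<\theta<1$, and then bounding the trace by Fatou.

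\emph{Part~\ref{itm:tr2intro}.} There is a genuine gap in your concentration-compactness sketch. You write that concentration is ``ruled out precisely by the convergence of total variations'', but this is not sufficient. Localising the Sobolev inequality around a putative concentration point $x_{0}$ yields, after sending $j\to\infty$ and then the radius to zero, only
\[
\mu_{1}(\{x_{0}\})\leq c\,|\A[D]u|(\{x_{0}\})^{\frac{n}{n-1}},
\]
where $\mu_{1}$ is the defect measure of $|u_{j}-u|^{n/(n-1)}$. Strict convergence of the total variations lets you pass to this limit, but it does \emph{not} force the right-hand side to vanish. What is needed is the separate, non-trivial fact that for an elliptic and cancelling operator every measure of the form $\A[D]u$ with $u\in\lebe^{1}_{\locc}$ is \emph{non-atomic}; this is Lemma~\ref{lem:nonatomic} in the paper (relying on \cite{RW} or \cite{ARDPHR}). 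Without it a Dirac mass in $\A[D]u$ could absorb concentration of $|u_{j}-u|^{n/(n-1)}$ and the argument collapses. You should name this ingredient explicitly. (Incidentally, Rellich--Kondrachov is not needed: $\lebe^{1}$-convergence is part of the hypothesis of $\A$-strict convergence, and the paper argues via convergence in measure plus the Brezis--Lieb lemma.)
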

Suitable higher order variants can be easily formulated. Whereas \ref{itm:tr1intro} is known for $\bv(\R^{n})$, it seems to be new even for $\bd(\R^{n})$. To the best of our knowledge, \ref{itm:tr2intro} seems to be a novel result even for $\bv(\R^{n})$. The strengthening of continuity properties from \ref{itm:tr1intro} to \ref{itm:tr2intro} can be seen in parallel with the strengthening of Theorem~\ref{thm:int} by Theorem~\ref{thm:main1}. Note that the foremost issue in proving \ref{itm:tr2intro} is that the $\A$-strict convergence is nonlinear\footnote{In the sense that $u_{j}\to u$ and $v_{j}\to v$ $\A$-strictly do \emph{not} imply that $u_{j}+v_{j}\to u+v$ $\A$-strictly.}. If $s=1$, then it is possible to give a variant of \ref{itm:tr1intro} for $\mathbb{C}$-elliptic operators $\A[D]$, but even here $\A$-strict continuity is never achievable.
\subsection{Structure of the paper}
This paper is organised as follows: In Section~\ref{sec:prelims} we fix notation and gather preliminary results on function spaces and potential theory. Section~\ref{sec:notions} gathers and connects algebraically the notions for differential operators to be used throughout. In Section~\ref{sec:EC}, we prove Theorems~\ref{thm:int} and \ref{thm:main1}, while Section~\ref{sec:s=1} is devoted to Theorem~\ref{thm:main2}. The aforementioned trace theory for $\bv^{\A}$-spaces then is given in Section~\ref{sec:selected}, and the appendix gathers background results on vectorial measures.
 
{\small
\subsection*{Acknowledgment} The authors thank Panu Lahti and Ben Hambly for helpful discussions on fractal sets and measures. B.R. acknowledges the hospitality of HCM Bonn, where a significant part of this work was completed. J.V.S. acknowledges the hospitality of the Mathematical Institute of the University of Oxford where part of this work was completed. This project has received funding from the European Research Council (ERC) under the European Union's Horizon 2020 research and innovation programme under grant agreement No 757254 (SINGULARITY). This
work was supported by Engineering and Physical Sciences Research Council Award
EP/L015811/1.}

\section{Preliminaries}\label{sec:prelims}
\subsection{General notation}\label{sec:notation}
Given $x_{0}\in\R^{n}$ and $r>0$, we denote $\ball(x_{0},r):=\{x\in\R^{n}\colon\;|x-x_{0}|<r\}$ the open ball with radius $r>0$ centered at $x_{0}$. We will work with \emph{double cones}, by which we mean sets $\mathcal{C}=\{x_0+tx\colon x\in\mathcal{S},t\in\R\}$, where $x_0\in\R^n$ is the \emph{apex} of $\mathcal{C}$ and $\mathcal{S}\subset\mathbb{S}^{n-1}$ denotes a non-empty, relatively open \emph{spherical cap}, i.e., the non-empty intersection of $\mathbb{S}^{n-1}$ with an open ball in $\R^n$.  
For an open set $\Omega\subset\R^{n}$ and a finite dimensional real vector space $E$, we denote $\mathscr{M}(\Omega,E)$ the finite $E$-valued Radon measures on $\Omega$; for more background information on vectorial measures, see, e.g., \cite[Chpt.~1]{AFP}. For $\mu\in\mathscr{M}(\Omega,E)$ and $A\in\mathscr{B}(\Omega)$ (with the Borel $\sigma$-Algebra $\mathscr{B}(\Omega)$ on $\Omega$), we denote $\mu\mres A:=\mu(A\cap -)$ the restriction of $\mu$ to $A$. The $n$-dimensional Lebesgue and $\alpha$-dimensional Hausdorff measures, $0\leq \alpha \leq n$, are denoted $\mathscr{L}^{n}$ and $\mathscr{H}^{\alpha}$, respectively. Given a $\mathscr{L}^{n}$-measurable map $u\colon\R^{n}\to V$, we recall that its \emph{precise representative} is defined by 
\begin{align*}
u^{*}(x):=\begin{cases}\displaystyle \lim_{R\searrow 0}\dashint_{\ball(x,R)}u\dif y&\;\text{provided this limit exists and is finite}, \\ 0 &\;\text{otherwise}. \end{cases} 
\end{align*}
Denoting as usual $S_{u}$ the Lebesgue discontinuity points of $u$, the right-hand side of the previous definition exists for all $x\in S_{u}^{c}$. For a given map $u\in\lebe^{1}(\R^{n},E)$, we shall work with the Fourier transform $\mathscr{F}u$ defined by 
\begin{align*}
\mathscr{F}u(\xi):=\frac{1}{(2\pi)^{\frac{n}{2}}}\int_{\R^{n}}u(x)e^{-\imag x \cdot \xi}\dif x,\qquad \xi\in\R^{n}.
\end{align*}
In this connection, we will write $\mathscr{S}(\R^n,E)$ for the Schwarz class of rapidly decreasing functions and $\mathscr{S}^\prime(\R^n,E)$ for its linear topological dual, the space of tempered distributions. Further, we denote the set of $E$-valued polynomials on $\R^{n}$ of degree at most $d$ by $\mathscr{P}_{d}(\R^{n},E)$, and set $\mathscr{P}(\R^{n},E):=\bigcup_{d}\mathscr{P}_{d}(\R^{n},E)$. We use the notation ``$\leq$'' for the linear subspace inclusion relation, whenever it does not denote inequality between numbers. Throughout, $c>0$ denotes a constant that does not depend on any quantity that may change from line to line. Also, $c_n$ denotes a constant that depends on the space dimension $n$ only. Finally, we clarify that $x\cdot\xi=x\cdot\Re\xi+\imag x\cdot \Im\xi$ whenever $x\in\R^n$ and $\xi\in\C^n$, where the dot product of real vectors is defined in a standard way.
\subsection{Function spaces and potential theory}
In this section we record various background results on $\lebe^{p}$, Riesz potential, and Triebel-Lizorkin   spaces that shall be required in the sequel; for more detail, the reader is referred to \textsc{Triebel} \cite{Triebel} and \textsc{Adams \& Hedberg} \cite{AH}.  We begin with the following lemma due to \textsc{Brezis \& Lieb}:
\begin{lemma}[{\cite{BrezisLieb}}]\label{lem:BrezisLieb}
Let $\Omega\subset\R^{n}$ be open and suppose that $1\leq p < \infty$. If $u,u_{1},u_{2},...\in\lebe^{p}(\Omega,V)$ satisfy (i) $u_{j}\rightharpoonup u$ in $\lebe^{p}(\Omega,V)$ and (ii) $u_{j}\to u$ pointwisely $\mathscr{L}^{n}$-a.e., then there holds 
\begin{align}
 \lim_{j\to\infty}\bigl(\|u_{j}\|_{\lebe^{p}(\Omega,V)}^{p}-\|u_{j}-u\|_{\lebe^{p}(\Omega,V)}^{p}\bigr)=\|u\|_{\lebe^{p}(\Omega,V)}^{p}.
\end{align}
\end{lemma}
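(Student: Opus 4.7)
The plan is to reduce the proof to an application of the dominated convergence theorem via a standard algebraic inequality. The key tool is the following pointwise estimate: for every $\varepsilon>0$ there exists $C_{\varepsilon}>0$ such that, for all $a,b\in V$,
\begin{align*}
\bigl||a+b|^{p}-|a|^{p}-|b|^{p}\bigr|\leq \varepsilon|a|^{p}+C_{\varepsilon}|b|^{p}.
\end{align*}
This inequality follows from the continuity and homogeneity of $t\mapsto |1+t|^{p}-|1|^{p}-|t|^{p}$ on $V$: for $|b|\leq \delta|a|$ with $\delta$ small one bounds the left-hand side by $\varepsilon|a|^{p}$ directly, and in the complementary regime $|b|>\delta|a|$ one bounds it by a multiple of $|b|^{p}$ using $|a+b|^{p}\leq 2^{p-1}(|a|^{p}+|b|^{p})$.

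Next, I would apply this inequality pointwise with $a:=u_{j}(x)-u(x)$ and $b:=u(x)$, giving $\mathscr{L}^{n}$-almost everywhere
\begin{align*}
\bigl||u_{j}|^{p}-|u_{j}-u|^{p}-|u|^{p}\bigr|\leq \varepsilon|u_{j}-u|^{p}+C_{\varepsilon}|u|^{p}.
\end{align*}
Introducing the nonnegative measurable function
\begin{align*}
W_{\varepsilon,j}:=\max\bigl\{\bigl||u_{j}|^{p}-|u_{j}-u|^{p}-|u|^{p}\bigr|-\varepsilon|u_{j}-u|^{p},\,0\bigr\},
\end{align*}
the inequality above yields $0\leq W_{\varepsilon,j}\leq C_{\varepsilon}|u|^{p}\in\lebe^{1}(\Omega)$, while hypothesis (ii) implies $W_{\varepsilon,j}\to 0$ almost everywhere in $\Omega$. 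The dominated convergence theorem gives $\int_{\Omega}W_{\varepsilon,j}\,\dif \mathscr{L}^{n}\to 0$ as $j\to\infty$, and hence
\begin{align*}
\limsup_{j\to\infty}\int_{\Omega}\bigl||u_{j}|^{p}-|u_{j}-u|^{p}-|u|^{p}\bigr|\,\dif\mathscr{L}^{n}\leq \varepsilon\,\limsup_{j\to\infty}\|u_{j}-u\|_{\lebe^{p}(\Omega,V)}^{p}.
\end{align*}

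Finally, weak convergence (i) is invoked only to ensure that $\{u_{j}\}$ is norm bounded in $\lebe^{p}(\Omega,V)$, so the right-hand side above is bounded by $M\varepsilon$ for some constant $M$ independent of $\varepsilon$. Letting $\varepsilon\searrow 0$ yields
\begin{align*}
\lim_{j\to\infty}\int_{\Omega}\bigl(|u_{j}|^{p}-|u_{j}-u|^{p}-|u|^{p}\bigr)\,\dif\mathscr{L}^{n}=0,
\end{align*}
which is exactly the desired identity. The only nontrivial step is the elementary inequality at the start; the remainder is a routine application of dominated convergence. Note that hypothesis (i) could in fact be weakened to mere $\lebe^{p}$-boundedness of the sequence, since that is all that is used.
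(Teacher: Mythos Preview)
The paper does not actually prove this lemma; it is merely quoted with a citation to the original Br\'ezis--Lieb paper. Your argument is correct and is precisely the original proof given by Br\'ezis and Lieb, so there is nothing to compare. Your closing remark that hypothesis (i) is used only through the uniform $\lebe^{p}$-bound is also accurate and worth noting.
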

We now briefly recall the definition of homogeneous Triebel-Lizorkin spaces as they will play an important auxiliary role later on. To this end, let $\Phi\in\mathscr{S}(\R^{n})$ be such that $\spt(\widehat{\Phi})\subset\ball(0,1)=:\ball_{0}$ such that $\widehat{\Phi}=1$ on $\ball(0,\frac{1}{2})=:\ball_{1}$. A dyadic resolution of unity then is obtained by virtue of $\varphi_{j}:=2^{jn}\Phi(2^{j}x)-2^{(j-1)n}\Phi(2^{j-1}x)$ for $j\in\mathbb{Z}$. Given $s\in\R$ and $1<p,q<\infty$, the \emph{homogeneous Triebel-Lizorkin space} $\dot{\trli}{_{p,q}^{s}}(\R^{n},V)$ is defined as the linear space of all $u\in(\mathscr{S}'/\mathscr{P})(\R^{n},V)$ such that 
\begin{align}\label{eq:TriebelLizorkinnorm}
\|f\|_{\dot{\trli}{_{p,q}^{s}}(\R^{n},V)}:=\Big\lVert\Big(\sum_{j\in\mathbb{Z}}2^{sjq}|(\mathscr{F}^{-1}(\varphi_{j}\mathscr{F}f))(\cdot)|^{q} \Big)^{\frac{1}{q}}\Big\rVert_{\lebe^{p}(\R^{n})}<\infty.
\end{align}
The inhomogeneous variant $\trli_{p,q}^{s}(\R^{n},V)$ is obtained by replacing $\mathscr{S}'/\mathscr{P}$ by $\mathscr{S}'$ and, in \eqref{eq:TriebelLizorkinnorm}, $j\in\mathbb{Z}$ by $j\in\mathbb{N}_{0}$. Following \cite[Ch.~5.2.3]{Triebel}, we define the Riesz potential operators $I_\alpha$ for $\alpha\in\R$ by
\begin{align*}
	\widehat{I_\alpha f}(\xi):=|\xi|^{-\alpha}\widehat{f}(\xi),\quad\xi\in\R^n
\end{align*}
for Schwartz maps $f\in\mathscr{S}(\R^{n},V)$. In particular, if $0<\alpha<n$, we retrieve the standard representation of the Riesz potential operators given by \eqref{eq:Riesz}. With this definition, the Riesz potential operators satisfy the semigroup property $I_\alpha I_\beta=I_{\alpha+\beta}$ for $\alpha,\beta>0$ with $\alpha+\beta<n$. The instrumental continuity property of Riesz potentials then is given in:
\begin{lemma}[{\cite[Thm.~5.2.3]{Triebel}}]\label{lem:triebel_lizorkin}
	Let $-\infty<s<\infty$, $1<p<\infty$. Then we have the equivalence of norms
	\begin{align*}
		\|f\|_{\dot{\trli}{_{p,2}^{s}}(\R^n)}\sim \|I_{-s}f\|_{\lebe^p(\R^n)}.
	\end{align*}
	Moreover, if $s=m$ is a natural number, we have that the quantity above is equivalent to the $\dot{\sobo}{^{m,p}}$ (semi-)norm, with the convention that $\dot{\sobo}{^{0,p}}=\lebe^p$. 
\end{lemma}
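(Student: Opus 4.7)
My strategy is to reduce the claimed equivalence to the classical Littlewood--Paley square function theorem combined with the H\"ormander--Mihlin multiplier theorem applied block-by-block, and then to derive the integer case from elementary Fourier multiplier identities.

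First, I would invoke the Littlewood--Paley theorem: for $1<p<\infty$ and $g\in(\mathscr{S}'/\mathscr{P})(\R^n)$ one has
\begin{equation*}
\|g\|_{\lebe^p(\R^n)}\sim \Big\lVert \Big(\sum_{j\in\mathbb{Z}}|\mathscr{F}^{-1}(\varphi_j\mathscr{F}g)|^{2}\Big)^{1/2}\Big\rVert_{\lebe^p(\R^n)},
\end{equation*}
which is precisely the identification $\lebe^p\simeq\dot{\trli}{_{p,2}^{0}}$ modulo polynomials. Applied to $g=I_{-s}f$, this transfers the task to the $\lebe^p$-level $\ell^{2}$ comparison of the blocks $\mathscr{F}^{-1}(\varphi_j|\cdot|^{s}\mathscr{F}f)$ with $2^{sj}\mathscr{F}^{-1}(\varphi_j\mathscr{F}f)$.

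Second, I would factor each block. Choosing a smooth $\widetilde{\varphi}_j$ equal to $1$ on $\spt\varphi_j$ and supported in a slightly larger dyadic annulus $\{|\xi|\sim 2^j\}$, one has $\varphi_j|\xi|^s = 2^{sj}\varphi_j m_j$ with $m_j(\xi):=\widetilde{\varphi}_j(\xi)|\xi|^{s}2^{-sj}$. The rescaled multipliers $\xi\mapsto m_j(2^{j}\xi)$ are all supported in one fixed annulus and satisfy H\"ormander--Mihlin derivative bounds uniformly in $j\in\mathbb{Z}$; by the dilation invariance of the Mihlin theorem together with the Fefferman--Stein $\ell^{2}$-valued extension, one obtains
\begin{equation*}
\Big\lVert\Big(\sum_{j}|\mathscr{F}^{-1}(\varphi_{j}\mathscr{F}I_{-s}f)|^{2}\Big)^{1/2}\Big\rVert_{\lebe^p}\sim \Big\lVert\Big(\sum_{j}2^{2sj}|\mathscr{F}^{-1}(\varphi_{j}\mathscr{F}f)|^{2}\Big)^{1/2}\Big\rVert_{\lebe^p} = \|f\|_{\dot{\trli}{_{p,2}^{s}}(\R^{n})}.
\end{equation*}
The converse direction is symmetric, via the reciprocal symbol $\widetilde{\varphi}_j|\xi|^{-s}2^{sj}$, which enjoys the same uniform Mihlin bounds.

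Third, for the integer case $s=m\in\mathbb{N}$, I would reduce $\|I_{-m}f\|_{\lebe^p}\sim \|D^{m}f\|_{\lebe^p}$ to two Mihlin applications. For any multi-index $\alpha$ with $|\alpha|=m$ the symbol $(\imag\xi)^\alpha/|\xi|^m$ is smooth on $\mathbb{S}^{n-1}$ and homogeneous of degree zero, hence a bounded multiplier on $\lebe^p$, giving $\|\partial^\alpha f\|_{\lebe^p}\leq c\|I_{-m}f\|_{\lebe^p}$. Conversely, the multinomial expansion $|\xi|^{2m}=\sum_{|\alpha|=m}\binom{m}{\alpha}\xi^{2\alpha}$ yields
\begin{equation*}
|\xi|^m = \sum_{|\alpha|=m}\binom{m}{\alpha}\frac{\xi^\alpha}{|\xi|^m}\,\xi^\alpha,
\end{equation*}
so that $I_{-m}f$ is realised as a finite sum of degree-zero (Mihlin-bounded) multiplier operators applied to the derivatives $\partial^{\alpha}f$ with $|\alpha|=m$, yielding the opposite bound.

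The principal obstacle is ensuring that the Mihlin constants for the block multipliers $m_j$ are genuinely independent of $j$: this is what forces the rescaling $\xi\mapsto 2^{j}\xi$ and exploits the fact that, once normalised to a fixed annulus, both $\widetilde{\varphi}_1$ and $|\cdot|^{s}$ enjoy uniform $C^{N}$-bounds for every $N$. A secondary technical point is the passage from scalar to $\ell^{2}$-valued Mihlin estimates via the Fefferman--Stein vector-valued inequality, which is standard in this context but must be explicitly invoked.
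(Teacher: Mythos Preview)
Your proposal is correct and follows the standard textbook route, but there is nothing to compare it against: the paper does not prove this lemma at all. It is stated as a citation to \cite[Thm.~5.2.3]{Triebel} and used as a black box in the proof of Proposition~\ref{prop:suff_EC}. Your Littlewood--Paley plus block-wise Mihlin argument, together with the Fefferman--Stein vector-valued extension, is precisely the mechanism behind the cited result in Triebel's monograph, so in effect you have reconstructed the reference rather than diverged from the paper.
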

We will also use an interpolation result for homogeneous Triebel-Lizorkin spaces:
\begin{lemma}[{\cite[Ch.~5.2.5,~Thm.~2.4.7]{Triebel}}]\label{lem:interpolation}
Let $-\infty<s_1,s_2<\infty$, $1<p_1,p_2<\infty$, and $0\leq \theta\leq 1$. Then
\begin{align*}
	\|f\|_{\dot{\trli}{_{p,2}^{s}}(\R^n)}\leq c\|f\|^{1-\theta}_{\dot{\trli}{_{p_1,2}^{s_1}}(\R^n)}\|f\|^{\theta}_{\dot{\trli}{_{p_2,2}^{s_2}}(\R^n)}\quad\text{ for }f\in\hold^\infty_c(\R^n),
\end{align*}
where
\begin{align*}
	s=(1-\theta)s_1+\theta s_2,\quad\frac{1}{p}=\frac{1-\theta}{p_1}+\frac{\theta}{p_2}.
\end{align*}
\end{lemma}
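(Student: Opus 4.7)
The plan is to argue directly from the Littlewood--Paley characterisation \eqref{eq:TriebelLizorkinnorm} via a two-step H\"older estimate: once pointwise on the dyadic frequency scale, then globally in $\lebe^p$. Write $\Delta_j f := \mathscr{F}^{-1}(\varphi_j \mathscr{F} f)$ for the $j$-th Littlewood--Paley block, so that
\begin{equation*}
\|f\|_{\dot{\trli}{_{p,2}^{s}}(\R^n)} = \Big\|\Big(\sum_{j\in\mathbb{Z}} 2^{2sj} |\Delta_j f|^2\Big)^{1/2}\Big\|_{\lebe^p(\R^n)},
\end{equation*}
and analogously with $(s_i,p_i)$ in place of $(s,p)$ for $i=1,2$. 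Since $f\in\hold^\infty_c(\R^n)$, these series and their summands are classical objects and no distributional subtlety arises.

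The pointwise step exploits the interpolation identity $s = (1-\theta) s_1 + \theta s_2$ to factorise
\begin{equation*}
2^{s j} |\Delta_j f(x)| = \bigl(2^{s_1 j}|\Delta_j f(x)|\bigr)^{1-\theta} \bigl(2^{s_2 j}|\Delta_j f(x)|\bigr)^{\theta},
\end{equation*}
after which H\"older's inequality for the counting measure on $\mathbb{Z}$, with conjugate exponents $\frac{1}{1-\theta}$ and $\frac{1}{\theta}$ (the cases $\theta\in\{0,1\}$ being trivial), yields the pointwise bound
\begin{equation*}
\Big(\sum_j 2^{2sj}|\Delta_j f(x)|^2\Big)^{1/2} \leq \Big(\sum_j 2^{2 s_1 j}|\Delta_j f(x)|^2\Big)^{(1-\theta)/2} \Big(\sum_j 2^{2 s_2 j}|\Delta_j f(x)|^2\Big)^{\theta/2}.
\end{equation*}
The second step is to integrate in $x$ against $\mathscr{L}^n$ and apply H\"older once more, this time in $\lebe^p(\R^n)$ with conjugate exponents $\frac{p_1}{p(1-\theta)}$ and $\frac{p_2}{p\theta}$. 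The hypothesis $\frac{1}{p} = \frac{1-\theta}{p_1} + \frac{\theta}{p_2}$ is exactly what makes these exponents H\"older conjugate after multiplication by $p$, and the resulting inequality is precisely the claim.

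I do not foresee any substantive obstacle: the assumption $f\in\hold^\infty_c(\R^n)$ ensures absolute and uniform convergence of the dyadic series on compact sets, and the only mild care required is to absorb the normalising constants appearing in the Littlewood--Paley definition into $c$ via the standard equivalence of Triebel--Lizorkin (quasi-)norms. A conceptually heavier alternative would be to invoke the complex interpolation identity $[\dot{\trli}{_{p_1,2}^{s_1}}, \dot{\trli}{_{p_2,2}^{s_2}}]_\theta = \dot{\trli}{_{p,2}^{s}}$ and Calder\'on's three-lines lemma, but the direct Littlewood--Paley/H\"older route above is self-contained and avoids appealing to the full interpolation theory.
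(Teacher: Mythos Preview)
Your proof is correct. The paper does not prove this lemma at all; it merely cites it from \textsc{Triebel}'s monograph as a known result, so there is no ``paper's own proof'' to compare against directly. The referenced passage in Triebel establishes the result via the abstract complex interpolation identity $[\dot{\trli}{_{p_1,2}^{s_1}}, \dot{\trli}{_{p_2,2}^{s_2}}]_\theta = \dot{\trli}{_{p,2}^{s}}$, which you rightly mention as the heavier alternative. Your two-step H\"older argument (first in $\ell^2(\mathbb{Z})$ on the Littlewood--Paley side, then in $\lebe^p(\R^n)$) is the standard elementary route to the multiplicative inequality and is entirely self-contained; it avoids the machinery of interpolation functors at the cost of being specific to the second microlocal index $q=2$ (or more generally any fixed $q$), which is all that is needed here.
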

We conclude this subsection with a result on Lebesgue points in a form following directly from \cite[Thm.~6.2.1]{AH}: 
\begin{lemma}\label{lem:Triebeltrace}
Let $0<s<1$, $0<t<1$ and $1<p<\infty$ be such that $t<sp$. If $\mu\in\lebe^{1,n-t}(\R^{n})$, then there holds $\mu(S_{u})=0$ for all $u\in \trli_{p,p}^{s}(\R^{n},V)$. 
\end{lemma}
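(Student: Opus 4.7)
My plan is to deduce the lemma as a direct consequence of two classical ingredients of nonlinear potential theory, both contained in Adams--Hedberg \cite{AH}. The first is a quasicontinuity statement: for $u\in\trli_{p,p}^{s}(\R^{n},V)$, the set $S_u$ of non-Lebesgue points is contained in a Borel set of vanishing Bessel $(s,p)$-capacity. The second is an absolute continuity statement: any Morrey measure $\mu\in\lebe^{1,n-t}(\R^{n})$ with $t<sp$ annihilates every set of vanishing Bessel $(s,p)$-capacity. Combining these gives $\mu(S_u)=0$ at once.

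For the first step, I would appeal to the Besov--Triebel-Lizorkin identification $\trli_{p,p}^{s}=\besov_{p,p}^{s}$ together with \cite[Thm.~6.2.1]{AH}: the latter shows that any function representable as a Bessel potential of an $\lebe^{p}$-function admits a Lebesgue point outside of a set of vanishing Bessel $(s,p)$-capacity $\capa_{s,p}$. This is precisely the form of \cite[Thm.~6.2.1]{AH} alluded to in the statement, and its proof proceeds via a truncation argument combined with a maximal-function estimate applied to the Bessel potential representation. For the second step, I rely on the classical Frostman-type lower bound $\capa_{s,p}(\ball(x,r))\geq c\, r^{n-sp}$, valid for $sp<n$ and $r\leq 1$, together with the Morrey control $|\mu|(\ball(x,r))\leq\|\mu\|_{\lebe^{1,n-t}}\, r^{n-t}$. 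Since the hypothesis $t<sp$ forces $n-t>n-sp$, a standard capacitary Vitali covering argument shows that any Borel set of vanishing $(s,p)$-capacity must have $\mu$-mass zero.

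The main technical delicacy is reconciling the secondary Triebel-Lizorkin index $q=p$ with the $q=2$ (Bessel potential) formulation in which Adams--Hedberg's capacity theory is most naturally phrased; this is handled by the above Besov identification together with the monotonicity of capacity under the standard Besov/Bessel embeddings, so no separate argument is required beyond invoking \cite[Thm.~6.2.1]{AH} and its Morrey corollary in the stated form. I expect this index-matching to be the only nontrivial bookkeeping point in the argument; the two underlying potential-theoretic statements are well established.
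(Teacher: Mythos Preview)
Your proposal is correct and aligns with the paper's own treatment: the paper does not give a separate proof but simply records the lemma as ``a result on Lebesgue points in a form following directly from \cite[Thm.~6.2.1]{AH}''. Your two-step outline (the non-Lebesgue set $S_u$ has vanishing $(s,p)$-capacity; Morrey measures in $\lebe^{1,n-t}$ with $t<sp$ are absolutely continuous with respect to $(s,p)$-capacity) is precisely the unpacking of that citation, so there is no methodological difference to report.
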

\section{Notions for Differential Operators}\label{sec:notions}
In this section, we compactly gather and introduce the necessary notions for $k$-th order differential operators $\A[D]$ of the form \eqref{eq:form}. These are stated in terms of the symbol map $\R^{n}\ni\xi\mapsto\A[\xi]$, where $n\geq 2$ throughout, and we say that $\A[D]$ is 
\begin{enumerate}
	\item\label{itm:elliptic} \emph{elliptic} whenever $\ker_{\R}\A[\xi]=\{0\}$  for any $\xi\in\R^n\setminus\{0\}$.
	\item\label{itm:Celliptic} \emph{$\C$-elliptic} whenever $\ker_{\C}\A[\xi]=\{0\}$  for any $\xi\in\C^n\setminus\{0\}$.
	\item\label{itm:cancelling} \emph{cancelling} whenever $\bigcap_{\xi\in\R^n\setminus\{0\}}\mathrm{im\,}\A[\xi]=\{0\}$.
	\item\label{itm:stronglycancelling} \emph{strongly cancelling} whenever $\bigcap_{\xi\in H\setminus\{0\}}\mathrm{im\,}\A[\xi]=\{0\}$ for any subspace $H\leq \R^n$ with $\dim H=2$.
\end{enumerate}
The difference between the conditions in \ref{itm:cancelling} and \ref{itm:stronglycancelling} can be conveniently represented as 
\begin{align*}
&\text{(cancellation)}\;\;\;\;\;\;\;\;\;\;\;\Leftrightarrow\;\;\;\bigcup_{H = \R^{n}}\bigcap_{\xi\in H\setminus\{0\}}\image \A[\xi]=\{0\},\\ &\text{(strong cancellation)}\Leftrightarrow\bigcup_{\substack{H\leq\R^{n} \\ \dim(H)=2}}\bigcap_{\xi\in H\setminus\{0\}}\image\A[\xi]=\{0\}.
\end{align*}
It is clear that $\mathbb{C}$-ellipticity implies ellipticity, and that if $\A[D]$ is elliptic and strongly cancelling, then it is elliptic and cancelling. The fact that $\C$-ellipticity implies cancellation was first observed in \cite[Lem.~3.2]{GR}, where it was established by use of an analytic characterisation of  cancellation \cite[Prop.~6.1]{VS}. The fact that $\C$-ellipticity implies strong cancellation was first noticed by the second author in \cite[Prop.~2.6]{Raita18}, where a slightly different characterisation of the cancellation is employed \cite[Lem.~2.5]{R}. Here we give the first purely algebraic proof of this fact:
\begin{proposition}\label{lem:FDN_implies_EC}
Let $k\geq 1$ and suppose that $\A[D]$ is a $k$-th order, $\mathbb{C}$-elliptic differential operator of the form \eqref{eq:form}. Then $\A[D]$ is strongly cancelling.
\end{proposition}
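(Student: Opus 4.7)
I argue by contradiction: suppose there is a $2$-dimensional subspace $H\leq \R^n$ with basis $\{e_1,e_2\}$ and some non-zero $w \in W$ lying in $\image\A[\xi]$ for every $\xi\in H\setminus\{0\}$. Parametrising $\xi=t_1e_1+t_2e_2$, the plan is to produce a $V$-valued rational solution $v(t_1,t_2)$ of $\A[\xi]v=w$ which is homogeneous of degree $-k$, and then exploit the complete factorisation of binary forms over $\mathbb{C}$ to violate a coprimality statement.

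\textbf{Construction of a rational right-inverse.} Because $\ker_{\mathbb{C}}\A[\xi]=\{0\}$ on $\mathbb{C}^n\setminus\{0\}$, the columns of the matrix $\A[t_1e_1+t_2e_2]$ are linearly independent over the fraction field $\mathbb{C}(t_1,t_2)$, so some $(\dim V)\times(\dim V)$ submatrix has non-vanishing polynomial determinant. Cramer's rule applied to that submatrix yields a candidate $v\in V\otimes\mathbb{C}(t_1,t_2)$ solving the corresponding sub-system. Membership of $w$ in $\image\A[\xi]$ is a polynomial condition on $\xi$ (vanishing of all $(\dim V+1)$-minors of $[\A[\xi]\mid w]$); by hypothesis it holds on the Zariski-dense real locus $\R^2\setminus\{0\}$, hence on all of $\mathbb{C}^2\setminus\{0\}$. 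Combined with uniqueness from $\mathbb{C}$-ellipticity, this upgrades the Cramer candidate to a genuine rational solution of the full system $\A[\xi]v=w$ over $\mathbb{C}(t_1,t_2)$.

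\textbf{Homogeneity and coprime form.} From $\A[\lambda\xi]v(\lambda\xi)=w=\A[\xi]v(\xi)$ together with $\mathbb{C}$-injectivity I extract $v(\lambda t_1,\lambda t_2)=\lambda^{-k}v(t_1,t_2)$, so $v$ is a rational binary form of degree $-k$. Since $\mathbb{C}[t_1,t_2]$ is a UFD and any rational binary form admits a homogeneous coprime representation, write $v=N/D$ with $N\in V\otimes\mathbb{C}[t_1,t_2]$ and $D\in\mathbb{C}[t_1,t_2]$ both homogeneous, coprime in the sense that no non-constant polynomial divides $D$ and every component of $N$, and satisfying $\deg D-\deg N=k\geq 1$. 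Clearing denominators yields the polynomial identity
\begin{equation*}
\A[t_1e_1+t_2e_2]\,N(t_1,t_2)\;=\;D(t_1,t_2)\,w\qquad\text{in }\mathbb{C}[t_1,t_2].
\end{equation*}

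\textbf{Algebraic contradiction.} Since $\deg D\geq 1$, the homogeneous polynomial $D$ splits over $\mathbb{C}$ into linear factors and admits a root $(a,b)\in\mathbb{C}^2\setminus\{0\}$. Evaluating the identity at $(a,b)$ gives $\A[ae_1+be_2]\,N(a,b)=0$; because $ae_1+be_2\in\mathbb{C}^n\setminus\{0\}$, $\mathbb{C}$-ellipticity forces $N(a,b)=0$. Homogeneity of $N$ then ensures that the linear form $bt_1-at_2$ divides every component of $N$, and the same linear form divides $D$ by construction, contradicting coprimality. Hence $w=0$, establishing \eqref{eq:n-2_canc_intro}. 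The principal obstacle is the first step: rigorously converting pointwise solvability on $\R^2\setminus\{0\}$ into an identically-valid rational right-inverse and then arranging the coprime homogeneous representation of $v$; once these algebraic preliminaries are in place, the binary-form contradiction at the end is essentially forced.
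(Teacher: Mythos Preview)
Your proof is correct and follows a genuinely different route from the paper's. Both arguments start the same way --- assume $w\in\image\A[\xi]$ for all $\xi$ in a $2$-plane $H$, and produce a $(-k)$-homogeneous rational solution $v(\xi)$ of $\A[\xi]v=w$ on $H$ --- but they diverge in how the contradiction is extracted.

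The paper constructs $v$ via the Moore--Penrose pseudoinverse $\A^\dagger[\xi]$, then invokes Proposition~\ref{prop:smith} (the Smith-type characterisation $D^d=\mathbb{B}[D]\circ\A[D]$, whose proof rests on Hilbert's Nullstellensatz) to produce operators $\mathbb{B}_i$ with $\mathbb{B}_i[\xi]w=(\xi\cdot e_i)^d P(\xi)/Q(\xi)$; this forces the denominator $Q$ to divide both coprime powers $(\xi\cdot e_1)^d$ and $(\xi\cdot e_2)^d$, hence to be constant. Your argument bypasses Proposition~\ref{prop:smith} entirely: you build $v$ by Cramer's rule plus a Zariski-density extension from $\R^2\setminus\{0\}$ to $\mathbb{C}^2\setminus\{0\}$, write $v=N/D$ in coprime homogeneous form, and then use directly that any non-constant binary form over $\mathbb{C}$ has a non-trivial root $(a,b)$. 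Evaluating $\A[ae_1+be_2]N(a,b)=D(a,b)w=0$ and applying $\mathbb{C}$-ellipticity gives $N(a,b)=0$, so the linear form $bt_1-at_2$ divides both $N$ and $D$.

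What this buys you: your proof is self-contained and avoids the Nullstellensatz, relying only on Cramer's rule and the elementary fact that homogeneous polynomials in two variables split into linear factors over $\mathbb{C}$. The paper's route, by contrast, leverages the already-established structural result Proposition~\ref{prop:smith}, which is of independent interest but heavier machinery for this particular implication.
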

For the proof of Proposition~\ref{lem:FDN_implies_EC} and as a result on its own right, we require the following characterisation of $\mathbb{C}$-elliptic operators. This appears as an extension of the arguments given by  \textsc{Smith} \cite{Smith}:
\begin{proposition}\label{prop:smith}
	Let $\A[D]$ be as in \eqref{eq:form}. The following are equivalent:
	\begin{enumerate}
		\item\label{itm:smith_a} $\A[D]$ is $\C$-elliptic.
		\item\label{itm:smith_b} There exists an integer $d$ and a homogeneous linear differential operator $\mathbb{B}[D]$ on $\R^n$ from $W$ to $V\odot^{d-k}\R^n$ such that $D^d=\mathbb{B}[D]\circ\A[D]$.
		\item\label{itm:smith_c} $\A[D]$ has \emph{finite dimensional nullspace}, i.e., $\dim\{u\in\mathscr{D}^\prime(\R^n,V)\colon \A[D]u=0\}<\infty$.
	\end{enumerate}
\end{proposition}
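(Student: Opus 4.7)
The plan is to close the cycle $(a)\Rightarrow (b)\Rightarrow (c)\Rightarrow (a)$, with the first implication being the substantive one and recovering a version of Smith's classical factorisation; the remaining two steps are soft.

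For $(a)\Rightarrow (b)$, I view $\A[\xi]$ as a $\dim W \times \dim V$ matrix with entries in $\C[\xi_1,\ldots,\xi_n]$. Because $\A[D]$ is $\C$-elliptic, this matrix has rank $\dim V$ at every $\xi\in\C^n\setminus\{0\}$, so the ideal $I$ generated by its $\dim V \times \dim V$ minors cuts out only $\{0\}$ in $\C^n$. The homogeneous Nullstellensatz then yields $\sqrt{I}=(\xi_1,\ldots,\xi_n)$, whence there exists an integer $N$ with $\xi_i^N \in I$ for every $i$. Cramer's rule, applied to each $\dim V\times\dim V$ square submatrix $\A_J[\xi]$ of $\A[\xi]$, supplies a polynomial matrix $C_J[\xi]\colon W\to V$, built from the adjugate of $\A_J[\xi]$ composed with the projection $W\to\C^{\dim V}$ onto the rows indexed by $J$, satisfying $C_J[\xi]\A[\xi]=(\det \A_J[\xi])\,I_V$. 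Substituting these into the representation of $\xi_i^N$ as a polynomial combination of the minors, I obtain polynomial matrices $M_i[\xi]$ with $M_i[\xi]\A[\xi]=\xi_i^N I_V$; iterating yields $\xi^\alpha I_V = M_\alpha[\xi]\A[\xi]$ for all multi-indices $\alpha$ with $|\alpha|$ sufficiently large. Fixing $d=|\alpha|$ large enough and extracting the homogeneous degree-$(d-k)$ component of each $M_\alpha$, the collection $\{M_\alpha\}_{|\alpha|=d}$ assembles into a single homogeneous symbol $\mathbb{B}[\xi]$ whose composition with $\A[\xi]$ reproduces the symbol of $D^d$, which transcribes back to the identity $D^d=\mathbb{B}[D]\circ \A[D]$ on the operator level.

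The implication $(b)\Rightarrow (c)$ is then immediate: any $u\in\mathscr{D}'(\R^n,V)$ with $\A[D]u=0$ satisfies $D^d u = \mathbb{B}[D]\A[D]u = 0$, so $u\in\mathscr{P}_{d-1}(\R^n,V)$, a finite-dimensional space. For the contrapositive $(c)\Rightarrow (a)$, if $\C$-ellipticity fails fix $\xi_0\in\C^n\setminus\{0\}$ and $v\in V\otimes_\R\C\setminus\{0\}$ with $\A[\xi_0]v=0$. Then $\A[t\xi_0]v=t^k\A[\xi_0]v=0$ for every $t\in\R$, and since $\A[D]$ has real coefficients the real-valued smooth function $u_t(x):=\operatorname{Re}\bigl(e^{i t\xi_0\cdot x}v\bigr)$ lies in $\ker\A[D]$. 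Letting $t$ range over an infinite subset of $\R$, the standard linear independence of distinct exponential characters produces infinitely many linearly independent members of $\ker\A[D]$, contradicting (c).

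The principal obstacle is the algebraic construction in $(a)\Rightarrow (b)$: passing from the pointwise injectivity of $\A[\xi]$ on $\C^n\setminus\{0\}$ to an \emph{explicit} polynomial left inverse combines Hilbert's Nullstellensatz with Cramer's rule, and the bookkeeping required to make the resulting $\mathbb{B}[\xi]$ both \emph{homogeneous} of degree $d-k$ and simultaneously factorise \emph{every} monomial $\xi^\alpha$ with $|\alpha|=d$ is the technical core of the argument. By comparison, $(b)\Rightarrow(c)$ rests only on the identification of distributions annihilated by all $d$-th derivatives with polynomials, and $(c)\Rightarrow(a)$ on the elementary construction of exponential-type solutions from a complex kernel vector.
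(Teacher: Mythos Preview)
Your proof is correct and follows essentially the same route as the paper's: Nullstellensatz combined with the adjugate/Cramer identity for $(a)\Rightarrow(b)$, the polynomial characterisation of $\ker D^d$ for $(b)\Rightarrow(c)$, and plane-wave solutions for the contrapositive $(c)\Rightarrow(a)$. The only visible difference is in the last step, where the paper invokes the full family $x\mapsto f(x\cdot\xi_0)v$ for arbitrary holomorphic $f$, while you specialise to exponentials $e^{it\xi_0\cdot x}v$; both produce an infinite-dimensional nullspace. One small point: your claim that the \emph{real parts} $u_t$ are linearly independent is not quite automatic from the independence of the complex characters (e.g.\ when $\xi_0$ and $v$ are real one has $u_t=u_{-t}$), but the argument is easily salvaged by observing that $\dim_{\R}\ker_{\R}\A[D]=\dim_{\C}\ker_{\C}\A[D]$ and establishing independence of the complex exponentials directly.
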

Here $V\odot^l\R^n$ denotes the space of $V$-valued, symmetric, $l$-linear maps on $\R^n$.
\begin{figure}
\centerline{
  \resizebox{9.5cm}{!}{
    \begin{tikzpicture}[
      scale=1,
      level/.style={thick},
      virtual/.style={thick,densely dashed},
      trans/.style={thin,double,<-,shorten >=2pt,shorten <=2pt,>=stealth},
      classical/.style={thin,double,<->,shorten >=4pt,shorten <=4pt,>=stealth}
    ]
    \draw[virtual] (2cm,8em) -- (4cm,8em) node[midway,above] {$\mathbb{C}$-ellipticity};
    \draw[virtual] (3.5cm,-2em) -- (4.5cm,-2em) node[midway,below] {\text{Ellipticity and Cancellation}};
    \draw[virtual] (1cm,1em) -- (0cm,1em) node[midway,above] {\text{Ellipticity and Strong Cancellation}};
    \draw[trans,red] (2.25cm,8em) -- (0.5cm,2.5em)  node[midway,left] {{$k=1$}};
    \draw[trans,black] (0.7cm,2.5em) -- (2.45cm,8em)  node[midway,left] {};
    \draw[trans,red] (3.5cm,8em) -- (4cm,-2em) node[midway,right] {{ $n=2$}};
    \draw[trans] (3.75cm,-1.75em) -- (0.5cm,0.75em) node[midway,below] {};
\draw[->] (-3.5cm,-7em) -- (-3.5cm,12em);
\draw[virtual] (3cm,-7em) -- (5cm,-7em) node[midway,above] {\text{Ellipticity}};
 \draw[trans] (4cm,-5.5em) -- (4cm,-3.5em)  node[midway,left] {};
 \node[rotate=90] at (-3cm,2em) {strength of the conditions};
    \end{tikzpicture}
  }
}
\caption{Connection of the single notions for differential operators of the form \eqref{eq:form}, contextualising Proposition~\ref{lem:FDN_implies_EC}, Lemma~\ref{lem:smahoz} and Example~\ref{ex:ESCnotCell} with previous results, cf. \cite{GR}; black arrows hold unconditionally for $k\geq 1$ and $n\geq 2$, whereas red arrows only hold under the conditions as specified beneath.}
\end{figure}
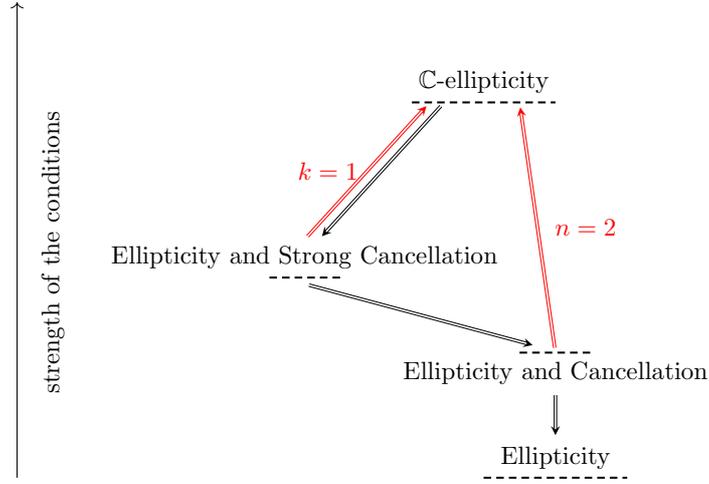
\begin{proof}
Suppose that $\A[D]$ is $\C$-elliptic. To prove that \ref{itm:smith_b} holds, we follow the ideas in \cite[Thm.~4.1]{Smith}, where the argument is formulated for convolution kernels instead of polynomials. We write $N=\dim V$ and $\dim W=m$ and represent $\A[\xi]=(a_{ji}(\xi))_{j=1,\ldots m,i=1,\ldots N}$, which we view as matrix valued complex polynomial. We write $\I=(j_1,\ldots,j_N)$ for a typical multi-index of length $N$ and entries $1\leq j_l\leq m$, $l=1,\ldots N$. These multi-indices correspond to all $N\times N$ sub-matrices of $\A[\xi]$, i.e., $\mathcal{D}_{\I}(\xi)=(a_{j_l,i}(\xi))_{l,i=1,\ldots,N}$. We write $d_\I(\xi)=\det\mathcal{D}_{\I}(\xi)$ and $d_\I^{li}(\xi)$ for the $(l,i)$-minor of the matrix $\mathcal{D}_\I(\xi)$.

With this setup, the $\C$-ellipticity assumption translates into the fact that the homogeneous scalar polynomials $d_{\I}$ have no non-trivial common complex zeroes. Since the polynomials $\xi_g$, $g=1\ldots n$, vanish on the set of common zeroes of $\{d_\I\}_{\I}$, we have by Hilbert's Nullstellensatz \cite[Thm.~4.1.2]{CLO} that there exists a power $p_g$ such that $\xi_g^{p_g}$ is in the ideal generated by $d_\I$. By taking $d=\max_{g=1,\ldots,n} p_g$, we can write
\begin{align}\label{eq:smith1}
	\xi^\alpha v_i=\sum_{\I}B^{\alpha}_\I(\xi)d_\I(\xi)v_i=\sum_{\I,h}B^{\alpha}_\I(\xi)d_\I(\xi)\delta_{ih}v_h,
\end{align}
where $v\in V$, $i=1\ldots N$, $|\alpha|=d$, $B_\I^\alpha(\xi)$ are complex polynomials as given by the Nullstellensatz, and $\delta$ (only here) denotes the Kronecker delta. By the adjugate (or cofactor) formula applied to the matrix $\mathcal{D}_{\I}(\xi)$, we have that 
\begin{align}\label{eq:smith2}
d_\I(\xi)\delta_{ih}=\sum_{l}a_{j_lh}(\xi)d_\I^{li}(\xi),
\end{align}
where $\I=(j_l)_{i=1}^N$. Substituting \eqref{eq:smith2} in \eqref{eq:smith1}, we get that 
\begin{align*}
	\xi^\alpha v_i&=\sum_{\I,h}B^{\alpha}_\I(\xi)\sum_{l}a_{j_lh}(\xi)d_\I^{li}(\xi)v_h=\sum_{\I,l}B^{\alpha}_\I(\xi)d_\I^{li}(\xi)\sum_{h}a_{j_lh}(\xi)v_h\\
	&=\left(\sum_{\I}B^{\alpha}_\I(\xi)P^i_\I(\xi)\right)\A[\xi]v,
\end{align*}
where $P^i_\I(\xi)\in\lin (W,\R)$ is the linear map that deletes the $m-N$ entries indexed by $\I^c$ of a vector $w\in W$ and multiplies the entries $j_l\in \I$, $l=1,\ldots, N$ by $d^{li}_\I(\xi)$. The proof of \ref{itm:smith_b} is complete.

Next, if \ref{itm:smith_b} holds, it is clear that $\A[D]u=0$ implies $D^d u=0$, so $u$ is a polynomial of degree at most $d-1$, hence \ref{itm:smith_c} easily follows. Finally, if \ref{itm:smith_c} holds, we assume for contradiction that \ref{itm:smith_a} fails, so that there exist non-zero $x\in\C^n$ and $v\in V+\imag V$ such that $\A[\xi]v$. Then one shows that the complex plane waves $u(x)=f(x\cdot\xi)v$ for $x\in\R^n$ and \emph{holomorphic} $f\colon\C\rightarrow\C$ are distributional solutions of $\A[D]u=0$, which implies failure of \ref{itm:smith_c}. A precise proof of this fact is given in \cite[Prop.~3.1]{GR}. The proof of the equivalence is complete.
\end{proof}
We now come to the:
\begin{proof}[Proof of Proposition~\ref{lem:FDN_implies_EC}]
  Assume for contradiction that there exist $w\in W \setminus \{0\}$ and a subspace $H\leq\R^n$ with $\dim H=2$ such that $w\in\mathrm{im\,}\A[\xi]$ for all $\xi\in H\zeroset$. We abbreviate $\A^\dagger[\xi]=(\A^*[\xi]\A[\xi])^{-1}\A^*[\xi]$, which is such that $\A^\dagger[\xi]\A[\xi]=\id_V$ for $\xi\in\R^n\setminus\{0\}$. We record that $\A[\xi]\A^\dagger[\xi]$ is the orthogonal projection on $\mathrm{im\,}\A[\xi]$. 
Let $e_i\in H$, $i=1,2$, be linearly independent. We choose $v\in V$ such that $\A_0[\cdot]\not\equiv0$, where we define
\begin{align*}
\A_0[\xi]=v\cdot\A^\dagger[\xi]w \qquad\text{ for }\xi\in H.
\end{align*}
This is possible since $\A^\dagger[\xi]w$ is never $0$ by ellipticity. Since $H$ is a plane through 0 and $\A^\dagger[\cdot]$ is a $(-k)$-homogeneous rational function, it follows that $\A_0[\cdot]$ is a $(-k)$-homogeneous, rational function. Thus, we can write $\A_0[\xi]=P(\xi)/Q(\xi)$ for coprime, homogeneous polynomials $P$ and $Q$ {on $H$}. 

We recall from Proposition~\ref{prop:smith} that there exists an integer $d\geq k$ and a homogeneous linear differential operator $\mathbb{B}[D]$ such that
\begin{align*}
D^d=\mathbb{B}[D]\circ\A[D].
\end{align*}
In other words, all homogeneous polynomials of degree $d$ can be written as multipliers of the symbol map $\A[\cdot]$. It follows that we can find scalar operators $\mathbb{B}_i[D]$ such that
\begin{align*}
(\xi\cdot e_i)^{d}v^*=\mathbb{B}_i[\xi]\A[\xi],\quad i=1,2,
\end{align*}
which then implies
\begin{align*}
\mathbb{B}_i[\xi]w=\mathbb{B}_i[\xi]\A[\xi]\A^\dagger[\xi]w=(\xi\cdot e_i)^d(v\cdot \A^\dagger[\xi]w)=(\xi\cdot e_i)^d P(\xi)/Q(\xi).
\end{align*}
Therefore, $Q(\xi)\mathbb{B}_i[\xi] w=(\xi\cdot e_i)^dP(\xi)$ for all $\xi\in H$ from which it follows that $Q$ divides both $(\xi\cdot e_i)^d$, $i=1,2$. Therefore $Q$ is constant, which contradicts the $(-k)$-homogeneity of $\A_0[\cdot]\not\equiv0$.
\end{proof}
The converse of Proposition~\ref{lem:FDN_implies_EC} \emph{only} holds for first order operators, where the key fact we use is the linearity of the map $\xi\mapsto\A[\xi]$. A similar computation appeared in \cite[Lem.~3.5]{GR}.
\begin{lemma}\label{lem:smahoz}
	Let $\A[D]$ be a first order elliptic operator that satisfies \eqref{eq:n-2_canc_intro}. Then $\A[D]$ is $\C$-elliptic.
\end{lemma}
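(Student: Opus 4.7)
My plan is to argue by contradiction, leveraging the linear dependence of $\xi \mapsto \A[\xi]$ in the first-order case to convert a failure of $\C$-ellipticity into an explicit violation of strong cancellation on a two-dimensional real subspace $H \leq \R^n$.

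Suppose $\A[D]$ is not $\C$-elliptic, so that there exist $\xi = \alpha + \imag\beta \in \C^n \zeroset$ (with $\alpha,\beta \in \R^n$) and $v = v_1 + \imag v_2 \in V + \imag V$, $v \neq 0$, with $\A[\xi] v = 0$. By linearity of $\A[\cdot]$, separating real and imaginary parts yields
\begin{align*}
\A[\alpha] v_1 &= \A[\beta] v_2 =: w_1, \\
\A[\alpha] v_2 &= -\A[\beta] v_1 =: w_2.
\end{align*}
The first step is to show that $\alpha,\beta \in \R^n$ are $\R$-linearly independent. If $\alpha = 0$, then $\A[\beta](v_1 + \imag v_2) = 0$ forces $\A[\beta]v_1 = \A[\beta] v_2 = 0$; real ellipticity together with $\beta \neq 0$ then gives $v = 0$, a contradiction. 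Symmetrically $\beta \neq 0$, and if $\beta = t\alpha$ for some $t \in \R$, the identity $\A[\alpha + \imag t \alpha] v = (1 + \imag t)\A[\alpha] v = 0$ again contradicts real ellipticity. Hence $H := \Span_{\R}(\alpha, \beta)$ is a two-dimensional subspace of $\R^n$.

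The second step is the crucial computation, which exploits the first-order hypothesis via $\A[a\alpha + b\beta] = a\A[\alpha] + b \A[\beta]$. A direct expansion gives
\begin{align*}
\A[a\alpha + b\beta](c v_1 + d v_2) = (ac + bd) w_1 + (ad - bc) w_2
\end{align*}
for all $a,b,c,d \in \R$. For any $(a,b) \neq (0,0)$ the linear map $(c,d) \mapsto (ac + bd,\, ad - bc)$ has determinant $a^2 + b^2 > 0$, hence surjects onto $\R^2$. Consequently,
\begin{align*}
\Span_{\R}(w_1, w_2) \subseteq \image \A[a\alpha + b\beta] \quad \text{for every } (a,b) \neq (0,0),
\end{align*}
that is, $\Span_{\R}(w_1, w_2) \subseteq \bigcap_{\eta \in H \zeroset} \image \A[\eta]$.

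It remains to verify that this span is nontrivial, which would then contradict \eqref{eq:n-2_canc_intro} on $H$ and conclude the proof. I expect this to be the easiest part, and also the most delicate: one must simultaneously handle the possibility that one of $v_1, v_2$ vanishes. But if $v_1 = 0$, then $w_1 = 0$ and $w_2 = -\A[\beta] v_1 = 0$, while also $\A[\alpha] v_2 = w_2 = 0$, forcing $v_2 = 0$ by real ellipticity and contradicting $v \neq 0$. Hence both $v_1, v_2$ are nonzero, and by real ellipticity so are $w_1 = \A[\alpha] v_1$ and $w_2 = \A[\alpha] v_2$. The span is therefore nontrivial, completing the contradiction.

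The main obstacle, and the reason the argument breaks down for $k \geq 2$, is the identity $\A[a\alpha + b\beta] = a\A[\alpha] + b\A[\beta]$: for higher order operators the symbol becomes a homogeneous polynomial of degree $k$ in $\xi$, and the clean bilinear pairing that produced the invertible matrix $\bigl(\begin{smallmatrix} a & b \\ -b & a\end{smallmatrix}\bigr)$ is no longer available. All other steps are essentially forced by the algebraic setup.
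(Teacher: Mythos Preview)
Your proof is correct and follows essentially the same route as the paper: both argue by contradiction, separate the complex kernel vector into real and imaginary parts, use real ellipticity to force $\alpha,\beta$ (the paper's $\xi_1,\xi_2$) to be linearly independent, and then exploit the linearity of $\xi\mapsto\A[\xi]$ to produce a nonzero element in $\bigcap_{\eta\in H\setminus\{0\}}\image\A[\eta]$. The only cosmetic difference is that the paper takes the specific choice $(c,d)=(a,b)$ to obtain $\A[a\alpha+b\beta](av_1+bv_2)=(a^2+b^2)w_1$ directly, whereas you carry out the full bilinear computation and observe that all of $\Span_{\R}(w_1,w_2)$ lies in each image; this is a harmless elaboration of the same idea.
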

\begin{proof}
	Assume for contradiction that $\A[D]$ is elliptic, but not $\mathbb{C}$-elliptic, so that there exist $\xi_1,\,\xi_2\in \R^n\setminus\{0\}$ and $v_1,\,v_2\in V\setminus\{0\}$ such that
	\begin{align*}
	\A[\xi_1]v_1&=\A[\xi_2]v_2\\
	\A[\xi_1]v_2&=-\A[\xi_2]v_1.
	\end{align*}
	By ellipticity of $\A[D]$, we have that $\xi_1,\,\xi_2$ and $v_1,\,v_2$ are linearly independent. 
	We then write
	\begin{align*}
	\A[a\xi_1+b\xi_2](av_1+bv_2)&=a^2\A[\xi_1]v_1+ab\A[\xi_1]v_2+ab\A[\xi_2]v_1+b^2\A[\xi_2]v_2\\
	&=(a^2+b^2)\A[\xi_1]v_1,
	\end{align*}
	so that, by ellipticity of $\A[D]$,
	\begin{align*}
	\bigcap_{0\neq\zeta\in\mathrm{span}\{\xi_1,\xi_2\}}\mathrm{im\,}\A[\zeta]\ni\A[\xi_1]v_1\neq0,
	\end{align*}
	so that $\A[D]$ fails \eqref{eq:n-2_canc_intro}, which concludes the proof since $\dim \mathrm{span}\{\xi_1,\xi_2\}=2$.
\end{proof}
We now turn to some examples, the first of which demonstrates that for operators of order $k\geq 2$, the implication of ellipticity and strong cancellation by $\C$-ellipticity is strict in general:
\begin{example}\label{ex:ESCnotCell}
	Let $k,\,n,\,N\geq 2$. The operator
	\begin{align*}
		\A[D]u=D^{k-1}(\partial_1 u_1+\partial_2 u_2,\,\partial_2u_1-\partial_2u_2,\partial_iu_j)_{i,j\notin\{1,2\}}
	\end{align*}
	defined for $u\colon\R^n\rightarrow\R^N$ is elliptic, strongly cancelling, but not $\C$-elliptic.
\end{example}
\begin{proof}
	The fact that $\A[D]$ is elliptic, but not $\C$-elliptic follows from \cite[Counterex.~3.4]{GR} and Proposition~\ref{prop:smith}. To see that $\A[D]$ satisfies \eqref{eq:n-2_canc_intro}, we write $\mathbb{B}=D^{k-1}$. It is obvious that $\mathbb{B}[D]$ has finite dimensional nullspace, so $\mathbb{B}[D]$ is $\C$-elliptic by Proposition~\ref{prop:smith}. Furthermore, $\mathbb{B}[D]$ is strongly cancelling by Lemma~\ref{lem:FDN_implies_EC}. To conclude, let $H\leq\R^n$ have $\dim H=2$. Then
	\begin{align*}
		\bigcap_{\xi\in H\setminus\{0\}}\mathrm{im\,}\A[\xi]\subset\bigcap_{\xi\in H\setminus\{0\}}\mathrm{im\,}\mathbb{B}[\xi]=\{0\}.
	\end{align*}
	The proof is complete.
\end{proof}
Finally, we directly demonstrate the condition of strong cancellation (and its failure) for by now well-understood differential operators, cf.~\cite{BDG,VS,Raita18}.
\begin{example}[The symmetric gradient]\label{ex:symmetricgradient}
We here consider for $n\geq 2$ the first order differential operator 
\begin{align*}
\E u:=\tfrac{1}{2}(Du+Du^{\top}),\qquad u\colon\R^{n}\to\R^{n}, 
\end{align*}
where, in the situation of \eqref{eq:form}, $V=\R^{n}$ and $W=\R_{\sym}^{n\times n}$. The nullspace of $\E$ is given by the \emph{rigid deformations} $\mathscr{R}(\R^{n}):=\{x\mapsto Ax+b\colon\;A\in\R_{\mathrm{skew}}^{n\times n},\,b\in\R^{n}\}$, and so $\E$ is $\mathbb{C}$-elliptic (see also \cite[Ex.~2.2]{BDG}). Hence, by Proposition~\ref{lem:FDN_implies_EC}, $\E\,$ is also strongly cancelling. 
\end{example}
\begin{example}[The trace-free symmetric gradient]\label{ex:tracefreesymmetricgradient}
We augment Example~\ref{ex:symmetricgradient} by the trace-free symmetric gradient operator, i.e., 
\begin{align*}
\E^{D}u:=\E u - \tfrac{1}{n}\di(u)E_{n\times n},\qquad u\colon\R^{n}\to\R^{n},
\end{align*}
where $E_{n\times n}\in\R^{n\times n}$ denotes the $(n\times n)$-unit matrix. In the framework of \eqref{eq:form}, we have $k=1$, $V=\R^{n}$ and $W=\R_{\sym}^{n\times n}$. If $n\geq 3$, then the elements of its nullspace are given by the \emph{conformal Killing vectors}, cf. \cite{ReshKernel}, which are a subspace of $\mathscr{P}_{2}(\R^{n},\R^{n})$. Hence $\E^{D}$ has finite dimensional nullspace, thus is $\mathbb{C}$-elliptic by Proposition~\ref{prop:smith} and strongly cancelling by Proposition~\ref{lem:FDN_implies_EC}. If $n=2$, cancellation and   strong cancellation coincide. On the other hand, by \cite[Sec.~2.4, Ex.~(a)]{GR},  $\E^{D}$ is not cancelling for $n=2$, and thus not strongly cancelling two dimensions. 
\end{example}

\section{Codimension $s<1$: Proofs of Theorems ~\ref{thm:int} and~\ref{thm:main1}}\label{sec:EC}
In this section, we give the proof of Theorems~\ref{thm:int} and \ref{thm:main1}.  For ease of exposition, the sufficiency and necessity parts are separated and dealt with in Sections~\ref{sec:suff_EC} and \ref{sec:thm1.1}, respectively. 
\subsection{Sufficiency of ellipticity and cancellation}\label{sec:suff_EC}
We now prove that elliptic and cancelling operators satisfy multiplicative trace inequalities. Toward the proof of Theorem~\ref{thm:main1}, we require:
\begin{proposition}[{\cite[Thm.~8.3]{VS}}]\label{prop:TriebelLizorkin}
Let $n\geq 1$, $k\geq 1$ and $\A[D]$ be an elliptic and cancelling differential operator of the form \eqref{eq:form}. Then if $s\in (k-n,k)$, $1<p<\infty$ and $1\leq q \leq\infty$ are such that $\tfrac{1}{p}-\frac{s}{n}=1-\frac{k}{n}$, then there exists a constant $c>0$ such that 
\begin{align*}
\|u\|_{{\dot\trli}{_{p,q}^{s}}(\R^{n},V)}\leq c\|\A[D]u\|_{\lebe^{1}(\R^{n},W)}\qquad\text{for all}\;u\in\hold_{c}^{\infty}(\R^{n},V). 
\end{align*}
\end{proposition}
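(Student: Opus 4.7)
The plan is to reduce the estimate to a Fourier multiplier bound applied to $f := \A[D]u$, and then to use the cancellation condition to salvage the $\lebe^1$-endpoint that would otherwise fail. Since $\A[D]$ is elliptic, the Moore--Penrose pseudoinverse $\A^{\dagger}[\xi] := (\A^*[\xi]\A[\xi])^{-1}\A^*[\xi]$ is smooth on $\s^{n-1}$ and homogeneous of degree $-k$; viewing it as a Fourier multiplier one has $u = \A^{\dagger}[D]f$ in the sense of $\mathscr{S}'/\mathscr{P}$. Hence the proposition reduces to the multiplier estimate
\begin{equation*}
\|\A^{\dagger}[D] f\|_{\dot{\trli}{_{p,q}^{s}}(\R^n,V)} \leq c\|f\|_{\lebe^1(\R^n,W)}
\end{equation*}
for $f$ in the image $\A[D](\hold_c^{\infty}(\R^n, V))$. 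By Lemma~\ref{lem:triebel_lizorkin} and the semigroup property of Riesz potentials, this is equivalent to bounding the zero-order multiplier $\xi \mapsto |\xi|^{k-s}\A^{\dagger}[\xi]$ from $\lebe^1$ into a Triebel--Lizorkin space of negative regularity, a task for which generic Calder\'on--Zygmund theory does not suffice.

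The cancellation condition enters through its analytic reformulation \cite[Prop.~6.1]{VS}: it translates into the vanishing of certain spherical means of $\A^{\dagger}[\omega]$ over $\omega \in \s^{n-1}$ against the relevant components of $f$, which supplies precisely the extra oscillation absent in a generic singular integral. To exploit this, I would take a Littlewood--Paley resolution $f = \sum_{j \in \mathbb{Z}} f_j$ with $\widehat{f_j}$ supported in $|\xi| \sim 2^j$, represent each $\A^{\dagger}[D] f_j$ as a superposition over $\omega \in \s^{n-1}$ of one-dimensional fractional integrals along the line through the origin in direction $\omega$, and apply the elementary bound $\|I_{k-s}^{\mathrm{1d}} g\|_{\lebe^{1/(1-(k-s))}(\R)} \leq c\|g\|_{\lebe^1(\R)}$ (valid for $0 < k - s < 1$, with an iteration covering other ranges) together with Fubini on each line. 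The final Triebel--Lizorkin quasi-norm would then be reconstructed from the sequence $(2^{js}\A^{\dagger}[D]f_j)_{j \in \mathbb Z}$ via the Fefferman--Stein vector-valued maximal inequality.

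The main obstacle is to ensure that the spherical-average representation commutes uniformly with the dyadic localisation across all scales, since the directional integrals are not frequency-localised in a way compatible with the Littlewood--Paley cutoffs; this typically requires a paraproduct-type decomposition and a careful tracking of error terms. In addition, the full parameter range $s \in (k-n, k)$, $1 < p < \infty$, $1 \leq q \leq \infty$ includes the borderline cases $q = 1$ and $q = \infty$ where Hilbert-space tricks are unavailable. A pragmatic alternative, and the only instance strictly needed for Theorem~\ref{thm:main1}, is to settle the special case $s = k-1$, $p = n/(n-1)$, $q = 2$, where Lemma~\ref{lem:triebel_lizorkin} identifies the Triebel--Lizorkin norm with the Sobolev seminorm $\|D^{k-1}(\cdot)\|_{\lebe^{n/(n-1)}}$ and the statement reduces to the Van Schaftingen limiting inequality \eqref{eq:VS}; the remaining regimes can then be recovered by interpolating through Lemma~\ref{lem:interpolation} against Mihlin-type multiplier estimates for $\A^{\dagger}[D]$ at reflexive $\lebe^p$-exponents, where ellipticity alone suffices.
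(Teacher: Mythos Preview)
The paper does not prove this proposition at all: it is quoted verbatim from \cite[Thm.~8.3]{VS} and used as a black box. So there is nothing to compare your argument against on the paper's side. What remains is whether your sketch stands on its own.

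Your main line (spherical decomposition of $\A^{\dagger}$, directional one-dimensional fractional integration, Fefferman--Stein reassembly) is honest about its own incompleteness, so I focus on the ``pragmatic alternative''. That alternative has a genuine gap. You propose to secure the single case $s=k-1$, $p=\tfrac{n}{n-1}$, $q=2$ via \eqref{eq:VS} and then recover the rest of the range by interpolating, through Lemma~\ref{lem:interpolation}, against Mihlin bounds for $\A^{\dagger}[D]$ at reflexive exponents. But any such second endpoint has $\|\A[D]u\|_{\lebe^{p_0}}$ with $p_0>1$ on the right-hand side, and interpolation of $\lebe^{1}$ with $\lebe^{p_0}$ never returns $\lebe^{1}$; the right-hand side of the interpolated inequality will sit strictly above $\lebe^{1}$. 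There is no admissible second $\lebe^{1}$-endpoint either: at $s=k$ one would need $\|D^{k}u\|_{\lebe^{1}}\lesssim\|\A[D]u\|_{\lebe^{1}}$, which is exactly what Ornstein's non-inequality rules out, and the Sobolev-type embeddings on the Triebel--Lizorkin scale only carry you \emph{downwards} in $s$ along the line $\tfrac{1}{p}-\tfrac{s}{n}=1-\tfrac{k}{n}$, not upwards.

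This matters because your claim that ``the only instance strictly needed for Theorem~\ref{thm:main1} is $s=k-1$'' is false. Inspect \eqref{eq:batch5} in the proof of Proposition~\ref{prop:suff_EC}: with $l=1$ (the case relevant to Theorems~\ref{thm:int} and~\ref{thm:main1}) the proposition is invoked at $s=k-1+\gamma$ with $\gamma\in(0,1)$, i.e.\ precisely in the open interval $(k-1,k)$ that your alternative does not reach. So the reduction to \eqref{eq:VS} is circular for the application at hand; one really needs the fractional range above $k-1$, and for that the cancellation condition has to be used in a way that goes beyond the integer-order estimate \eqref{eq:VS}. The argument in \cite{VS} does this directly at the level of the Littlewood--Paley pieces, and there is no shortcut via interpolation from \eqref{eq:VS} alone.
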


The proof of \ref{itm:thm1.1_b}$\implies$\ref{itm:thm1.1_a} in Theorem~\ref{thm:main1} follows as a particular case of the following:
\begin{proposition}\label{prop:suff_EC}
	Let $\A[D]$ as in \eqref{eq:form} be elliptic and canceling, $1\leq l\leq \min\{n-1,k\}$ be an integer, $0\leq s<l$, $q=\frac{n-s}{n-l}$, and $\frac{s}{l}\frac{n-l}{n-s}<\theta\leq 1$. Then there exists a constant $c>0$ such that
	\begin{align*}
		\|D^{k-l}u\|_{\lebe^q(\dif\mu)}\leq c\|\mu\|_{\lebe^{1,n-s}}^{1/q}\|D^{k-l}u\|_{\lebe^{\frac{n}{n-l}}(\dif\mathscr L^n)}^{1-\theta}\|\A u\|^\theta_{\lebe^1(\dif\mathscr L^n)}
	\end{align*}
	for all $u\in\hold^\infty_c(\R^n,V)$ and positive Radon measures $\mu$.
\end{proposition}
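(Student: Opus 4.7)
The plan is to combine Proposition~\ref{prop:TriebelLizorkin}, Adams' trace inequality \eqref{eq:adams}, and Triebel-Lizorkin interpolation (Lemma~\ref{lem:interpolation}), all linked through the Riesz potential identification of Lemma~\ref{lem:triebel_lizorkin}. The crux of the argument is to introduce an auxiliary parameter $\alpha\in(0,l)$, write $D^{k-l}u=I_{\theta\alpha}f$ for $f:=I_{-\theta\alpha}D^{k-l}u$, and distribute the derivatives: $\theta\alpha$ units of smoothness are absorbed by the Morrey-Riesz trace, while the remaining $k-l+\theta\alpha$ units of $u$ are controlled by interpolating between a trivial $\lebe^{n/(n-l)}$-bound on $D^{k-l}u$ and the sharp elliptic-cancelling bound supplied by Proposition~\ref{prop:TriebelLizorkin}.

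Concretely, I would first fix $\alpha\in(0,l)$ with $\theta\alpha>\frac{s(n-l)}{n-s}$; such a choice is possible precisely because $\theta>\frac{s}{l}\frac{n-l}{n-s}$, and letting $\alpha\nearrow l$ shows that this range of $\theta$ is the best the method can reach. Setting $p:=n/(n-l+\theta\alpha)$, a direct computation yields $\frac{(n-s)p}{n-\theta\alpha p}=q$ as well as $s<\theta\alpha p<n$, so Adams' inequality applied with potential order $\theta\alpha$ to $f$ gives
\begin{equation*}
\|D^{k-l}u\|_{\lebe^q(\dif\mu)}\leq c\|\mu\|^{1/q}_{\lebe^{1,n-s}}\|I_{-\theta\alpha}D^{k-l}u\|_{\lebe^p}.
\end{equation*}
By Lemma~\ref{lem:triebel_lizorkin} and standard Fourier-multiplier bounds on homogeneous Triebel-Lizorkin scales, the second factor is controlled by $\|u\|_{\dot{\trli}{_{p,2}^{\theta\alpha+k-l}}}$.

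To conclude, I would interpolate via Lemma~\ref{lem:interpolation} between the endpoints $(s_1,p_1)=(k-l,n/(n-l))$ and $(s_2,p_2)=(\alpha+k-l,n/(n-l+\alpha))$ with parameter $\theta$; the interpolated exponents agree with $(k-l+\theta\alpha,p)$ by inspection. The first endpoint is comparable to $\|D^{k-l}u\|_{\lebe^{n/(n-l)}}$ through Lemma~\ref{lem:triebel_lizorkin}, and the second is dominated by $\|\A[D]u\|_{\lebe^1}$ through Proposition~\ref{prop:TriebelLizorkin}, whose dimensional hypothesis $\frac{1}{p_2}-\frac{\alpha+k-l}{n}=1-\frac{k}{n}$ and range condition $\alpha+k-l\in(k-n,k)$ are both satisfied thanks to $\alpha\in(0,l)$ and $l\leq n-1$. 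The main bookkeeping difficulty — and the only genuine obstacle — lies in this parameter choice: every index condition (Adams' threshold on $s$, admissibility of $\theta\alpha$, the Triebel-Lizorkin interpolation identity, and the scaling in Proposition~\ref{prop:TriebelLizorkin}) must be simultaneously met, and it is exactly this compatibility that forces the sharp lower bound on $\theta$.
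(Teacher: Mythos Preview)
Your proposal is correct and follows essentially the same approach as the paper's proof: Adams' trace inequality combined with the Riesz-potential identification of Triebel--Lizorkin norms, then interpolation between the $\dot{\trli}{_{n/(n-l),2}^{k-l}}$ endpoint (handled by Lemma~\ref{lem:triebel_lizorkin}) and a second Triebel--Lizorkin endpoint (handled by Proposition~\ref{prop:TriebelLizorkin}). The only difference is a relabelling of the auxiliary parameter---the paper calls the Riesz-potential order $\alpha$ and writes the second interpolation endpoint at smoothness $k-l+\gamma$ with $\gamma=\alpha/\theta$, whereas you call the potential order $\theta\alpha$ and place the endpoint at $k-l+\alpha$; under the substitution $\alpha_{\text{yours}}\leftrightarrow\gamma_{\text{paper}}$ the two arguments and all parameter constraints coincide.
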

\begin{proof}
Let $s\frac{n-l}{n-s}<\alpha <\theta l$. Using the semigroup property of Riesz potentials and {Adams}' trace theorem - cf. \eqref{eq:adams} or \cite[Thm.~7.2.1]{AH} - we obtain
\begin{align}\label{eq:batch1}
\begin{split}
	\|D^{k-l}u\|_{\lebe^q(\dif\mu)}&=\|I_\alpha I_{-\alpha}D^{k-l}u\|_{\lebe^q(\dif\mu)}\\
	&\leq c\|\mu\|^{1/q}_{\lebe^{1,n-s}}\|I_{-\alpha}D^{k-l}u\|_{\lebe^p(\dif\mathscr{L}^n)},
\end{split}
\end{align}
where $p=\frac{n}{n-l+\alpha}$. This step requires the restriction $s\frac{n-l}{n-s}<\alpha$, since we must have $p<q$. By the definition of the Riesz potentials and basic properties of the Fourier transforms of derivatives we obtain the pointwise equality
\begin{align}\label{eq:batch2}
\begin{split}
	\|I_{-\alpha}D^{k-l}u\|_{\lebe^p(\dif\mathscr{L}^n)}&= c\bigl\|\mathscr{F}^{-1}\bigl(|\xi|^\alpha \hat u(\xi)\otimes\xi^{\otimes(k-l)}\bigr)\bigr\|_{\lebe^p(\dif\mathscr{L}^n)}\\
	&\leq c\bigl\|\mathscr{F}^{-1}\left(|\xi|^{k-l+\alpha} \hat u(\xi)\right)\bigr\|_{\lebe^p(\dif\mathscr{L}^n)}\\
	&= c\|I_{-(k-l+\alpha)}u\|_{\lebe^p(\dif\mathscr{L}^n)},
\end{split}
\end{align}
where the second line follows from the H\"ormander-Mihlin multiplier theorem, since $1<p<\infty$ from the restrictions on $\alpha$ and the fact that
\begin{align*}
|\xi|^\alpha \hat u(\xi)\otimes\xi^{\otimes(k-l)}=\left[|\xi|^{k-l+\alpha} \hat u(\xi)\right]\otimes\left(\frac{\xi}{|\xi|}\right)^{\otimes(k-l)}.
\end{align*}
The third line in \eqref{eq:batch2} follows, again, by the definition of the Riesz potentials. By the isomorphism between Riesz potential spaces and certain homogeneous Triebel-Lizorkin spaces (see Lemma~\ref{lem:triebel_lizorkin}), we can further estimate
\begin{align}\label{eq:batch3}
\begin{split}
	\|I_{-(k-l+\alpha)}u\|_{\lebe^p(\dif\mathscr{L}^n)}&\leq c\|u\|_{\dot{\trli}{^{k-l+\alpha}_{p,2}(\R^n)}}\\
	&\leq c\|u\|^{1-\theta}_{\dot{\trli}{^{k-l}_{\frac{n}{n-l},2}(\R^n)}}\|u\|^\theta_{\dot{\trli}{^{k-l+\gamma}_{\frac{n}{n-l+\gamma},2}(\R^n)}},
\end{split}	
\end{align}
where the second inequality follows with $\gamma=\frac{\alpha}{\theta}\in(0,l)$ by interpolation of homogeneous Triebel-Lizorkin spaces (see Lemma~\ref{lem:interpolation}). We proceed to estimate each term arising from \eqref{eq:batch3}. Firstly, using Lemma~\ref{lem:triebel_lizorkin}, we have that
\begin{align}\label{eq:batch4}
	\|u\|_{\dot{\trli}{^{k-l}_{\frac{n}{n-l},2}}}\leq c\|D^{k-l}u\|_{\lebe^{\frac{n}{n-l}}(\dif\mathscr{L}^n)}.
\end{align}
Finally, by Proposition~\ref{prop:TriebelLizorkin}, we have that 
\begin{align}\label{eq:batch5}
	\|u\|_{\dot{\trli}{^{k-l+\gamma}_{\frac{n}{n-l+\gamma},2}(\R^n)}}\leq c\|\A[D]u\|_{\lebe^1(\dif\mathscr{L}^n)},
\end{align}
which is compatible with the restriction $0<\gamma<l$.

The proof is then concluded by concatenating the estimates \eqref{eq:batch1}, \eqref{eq:batch2}, \eqref{eq:batch3}, \eqref{eq:batch4}, and \eqref{eq:batch5} in their order of appearance.
\end{proof}
Note that, if the codimension equals $s=l$ when $q=1$, then the method breaks down, e.g., since we require $1=\frac{s}{l}\frac{n-l}{n-s}<\theta\leq 1$. This is no coincidence, as can be seen from Section~\ref{sec:s=1}. Somewhat surprisingly, the only instance in which we can deal with the critical codimension case $s=l$ of Proposition~\ref{prop:suff_EC} is the limiting case $l=n$:
\begin{remark}
	Let $\A[D]$ be a $k$-th order elliptic and canceling operator on $\R^n$, with $k\geq n$. By the main results of the second author's recent work with \textsc{Skorobogatova} \cite{RaSk}, we have that for any $u\in\bv^\A(\R^n)$, we have that $D^{k-n}u$ is continuous, vanishing at infinity. Here we recall that $\bv^\A(\R^n)$ is the space of functions $u\in\sobo^{k-1,1}(\R^n,V)$ such that $\A[D]u\in\mathscr M(\R^n,W)$. Moreover, for any finite measure $\mu\in\mathscr M(\R^n)$ and any $u\in\bv^\A(\R^n)$, we have that the pairing $\langle \mu,|D^{k-n}u|\rangle$ is well-defined and the estimate
	\begin{align*}
		\|D^{k-n}u\|_{\lebe^1(\dif\mu)}\leq |\mu|(\R^n)\|D^{k-n}u\|_{\lebe^\infty(\R^n)}\leq c\|\mu\|_{\lebe^{1,0}}|\A[D]u|(\R^n),
	\end{align*}
	which seems to be a suitable, more general version of the estimate of Proposition~\ref{prop:suff_EC} in the limiting case $s=l=n$. The convention we make is $q=\frac{0}{0}=1$, which is consistent with the correct parameters for the case $s=l<n$. Also, it is not reasonable to expect a multiplicative inequality arising from interpolation in this case since $\sobo^{s,n/s}(\R^n)\hookrightarrow \hold_0(\R^n)$ only if $s=n$.
\end{remark}

\subsection{Necessity of ellipticity and cancellation}\label{sec:thm1.1}
We first construct special regular sets of fractional dimension, which we will use as special choices of $\Sigma$ in the proof of necessity.
\begin{lemma}\label{lem:uniform_set}
Let $d\geq 1$ and $\alpha\in (0,d]$. Moreover, let $\mathcal{C}\subset\R^d$ be a double cone in the sense of Section~\ref{sec:notation}, having apex at $0$. Then there exists a subset $\Sigma\subset\mathcal{C}\cap B(0,1)$ and constants $0<m\leq M<\infty$ such that for all $0\leq r \leq 1$ there holds
\begin{align*}
mr^\alpha\leq \mathscr{H}^\alpha(B(0,r)\cap\Sigma)\leq Mr^{\alpha}.
\end{align*}
\end{lemma}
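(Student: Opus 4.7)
The plan is a two-step self-similar construction: first build a compact seed set $K_0$ of finite positive $\mathscr{H}^\alpha$-measure inside a small ball sitting in $\mathcal{C}\cap(B(0,1)\setminus\overline{B(0,1/2)})$, and then accumulate dyadic rescalings of $K_0$ at the apex to form
\[
\Sigma\;:=\;\{0\}\cup\bigcup_{j=0}^{\infty}2^{-j}K_0.
\]
The only role played by the cone hypothesis is its invariance under positive scalar multiplication, which automatically places every $2^{-j}K_0$ inside $\mathcal{C}$ and, once $K_0$ has been arranged to sit in an annulus away from the apex, places the rescaled pieces in pairwise disjoint dyadic annuli.

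For the seed set, since $\mathcal{S}$ is a non-empty relatively open spherical cap, I would pick $e\in\mathcal{S}$ and $\rho_0\in(0,1/4)$ small enough that $\overline{B(\tfrac{3}{4}e,\rho_0)}\subset\mathcal{C}\cap\bigl(B(0,1)\setminus\overline{B(0,1/2)}\bigr)$; this is possible because $\mathcal{C}$ is open at $\tfrac{3}{4}e\neq 0$ and stable under positive scaling. When $\alpha=d$, take $K_0:=\overline{B(\tfrac{3}{4}e,\rho_0/2)}$. For $\alpha\in(0,d)$, fix an integer $N$ and set $r:=N^{-1/\alpha}\in(0,1)$; since $\alpha\leq d$, $Nr^d\leq 1$, so for $N$ sufficiently large (depending only on $\alpha$ and $d$) one can pack $N$ pairwise disjoint closed balls of radius $r\rho_0$ into $\overline{B(\tfrac{3}{4}e,\rho_0)}$ with well-separated centres $c_1,\ldots,c_N$. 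The associated similarities $\phi_i(x):=r(x-\tfrac{3}{4}e)+c_i$ satisfy the open set condition on $\overline{B(\tfrac{3}{4}e,\rho_0)}$, so by Hutchinson's theorem \cite{Hutchinson} their attractor $K_0$ is a compact subset of $\overline{B(\tfrac{3}{4}e,\rho_0)}$ with $0<\mathscr{H}^\alpha(K_0)=:c_0<\infty$, the identity $Nr^\alpha=1$ fixing the similarity dimension at $\alpha$.

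With the above choice of $K_0$, the inclusion $K_0\subset B(0,1)\setminus\overline{B(0,1/2)}$ implies $2^{-j}K_0\subset B(0,2^{-j})\setminus\overline{B(0,2^{-j-1})}$ for each $j\geq 0$, so the pieces of $\Sigma$ sit in pairwise disjoint dyadic annuli and $\mathscr{H}^\alpha(2^{-j}K_0)=2^{-j\alpha}c_0$. Given $r\in(0,1]$, I would pick $j_1\in\mathbb{N}_0$ with $2^{-j_1}\leq r<2^{1-j_1}$: the pieces with $j\geq j_1$ lie entirely in $B(0,r)$, while the pieces with $j\leq j_1-2$ are disjoint from $B(0,r)$. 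Summing the geometric series in $2^{-j\alpha}$ and using the two-sided bound on $2^{-j_1}$ in terms of $r$ yields the desired conclusion with $m=2^{-\alpha}c_0/(1-2^{-\alpha})$ and $M=2^{\alpha}c_0/(1-2^{-\alpha})$.

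The only non-routine input is the construction of $K_0$ for non-integer $\alpha$ via the self-similar fractal theory; everything else is elementary geometric-series bookkeeping, making essential use of the scaling invariance of $\mathcal{C}$.
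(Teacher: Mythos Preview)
Your proof is correct and takes a genuinely different route from the paper's. The paper starts from a self-similar set $S\subset\R^d$ (again via \cite{Hutchinson}) whose $\alpha$-density is bounded above and below at \emph{every} point, translates so that $0\in S$, dilates so that the density bounds hold for all $r\in[0,1]$, and then squeezes $S$ into the cone by an explicit bi-Lipschitz ``angular compression'' $L^{\pm}\colon H^{\pm}\to\mathcal{C}^{\pm}$ which rescales angles with the cone axis; the density bounds survive because $\mathscr{H}^\alpha$ behaves well under bi-Lipschitz maps. By contrast, you never deform anything: you plant a single Hutchinson attractor $K_0$ of positive finite $\mathscr{H}^\alpha$-measure inside a ball already sitting in an annulus in $\mathcal{C}$, and then exploit the dilation invariance of $\mathcal{C}$ to stack the rescalings $2^{-j}K_0$ in disjoint dyadic annuli accumulating at the apex, so that the required bound at $0$ is just a geometric series.

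Your construction is more elementary in that it avoids building (and checking the bi-Lipschitz property of) the angular map, and it yields explicit constants $m,M$ in terms of $c_0=\mathscr{H}^\alpha(K_0)$. The paper's construction, on the other hand, produces a set that is $\alpha$-Ahlfors regular at every one of its points, not only at $0$; but the lemma and its sole application (Lemma~\ref{lem:-alpha_hom}) only use the density at the apex, so this extra uniformity is not needed. Both arguments invoke Hutchinson in an essential way; the difference lies purely in how the fractal is fitted into the cone.
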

\begin{proof}
We recall from \cite{Hutchinson} that there exists a set $S\subset\R^d$ and constants $0<\lambda\leq\Lambda<\infty$ such that
\begin{align*}
\lambda\leq \liminf_{r\searrow 0}\dfrac{\mathscr{H}^\alpha(B(x,r)\cap S)}{r^\alpha}\leq \limsup_{r\searrow 0}\dfrac{\mathscr{H}^\alpha(B(x,r)\cap S)}{r^\alpha}\leq \Lambda
\end{align*}
holds for all $x\in S$. Since the 
Hausdorff measure is translation invariant, we can assume that $0\in S$, so that there exists $\varepsilon>0$ such that
\begin{align}\label{eq:S}
\dfrac{\lambda}{2}r^\alpha\leq \mathscr{H}^\alpha(B(0,r)\cap S)\leq 2\Lambda r^\alpha\qquad\text{for all }0\leq r\leq\varepsilon.
\end{align}
Applying a dilation by $\varepsilon^{-1}$ if necessary, we can assume that $\varepsilon=1$ (the fact that uniformly dilating by $\varepsilon^{-1}$ changes $\mathscr{H}^\alpha$-measure by a factor of $\varepsilon^{-\alpha}$ follows from the definition). 

In particular, the problem is solved if $d=1$, since in that case $\mathcal{C}\cap B(0,1)=(-1,1)$.

If $d\geq2$, note that it suffices to solve the problem for $d-1<\alpha\leq d$, otherwise we apply the following construction in a hyperplane of dimension $\lceil \alpha\rceil$.

Write now explicitly $\mathcal{C}=\R \mathcal{S}$, where $\mathcal{S}$ is a spherical cap in $\mathbb{S}^{n-1}$. Say that the unit vector $e\in\R^n$ is the centre of the sperical cap $\mathcal{S}$. Let $\theta\in(0,\pi/2)$ be the angle $e$ makes with $\partial\mathcal{C}$. Since we chose $\alpha>d-1$, \eqref{eq:S} holds with $S$ replaced by $S\setminus e^\perp$. Write now $H^+$ for the open half-space bounded by $e^\perp$ that contains $\mathcal{S}$ and let $\mathcal{C}^+=H^+\cap \mathcal{C}$. This geometrical situation is displayed in Figure~\ref{fig:cone}.

We will now define a bi-Lipschitz map $L^+$ between $H^+$ and $\mathcal{C}^+$ as follows: Let $x\in H^+$, so that $x\in |x|\mathbb{S}^{n-1}$. Consider the arc of angle $\theta_x$ in $|x|\mathbb{S}^{n-1}$ defined by $0,x,|x|e$. In this arc, there is a unique point $y$ (in $|x|\mathcal{S}$) such that the angle defined by $0,y,|x|e$ equals $\theta_y=2\theta_x\theta/\pi$. Define $L^+(x)=y$, so that the inverse of $L^+$ is given by $\theta_x=\pi\theta_y/(2\theta)$.

It is then clear by definition of $\mathscr{H}^\alpha$-measure that
\begin{align}\label{eq:L}
\mathscr{H}^{\alpha}\left(L^+(S\cap H^+)\cap B(0,r)\right)\sim \mathscr{H}^{\alpha}\left((S\cap H^+)\cap B(0,r)\right),
\end{align}
for $0\leq r\leq 1$, with constants given by the Lipschitz constants of $L^+$ and its inverse.

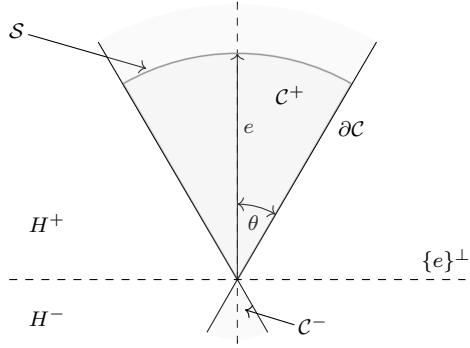
\begin{figure}
\begin{tikzpicture}
\draw (0,0) -- (-1.83,3.15);
\draw (0,0) -- (1.83,3.15);
\draw [thick,fill=gray!20,opacity=0.2](120:3)--(0,0)--(60:3) arc (60:120:3)--cycle;
\draw[->] (0,0) -- (0,3);
\node at (0.15,2) {\footnotesize{\text{$e$}}};
\draw [dashed] (0,-0.85) -- (0,3.75);
\draw [<->]  (60:1)   arc (60:90:1);
\node at (0.21,0.75) {\footnotesize{$\theta$}};
\draw [->] (-2.75,3.25) -- (-1.2,2.8);
\node at (-2.9,3.3) {\footnotesize{\text{$\mathcal{S}$}}};
\draw [-,gray]  (60:3)   arc (60:120:3);
\path [fill=gray!20,opacity=0.2](120:3.65)--(0,0)--(60:3.65) arc (60:120:3.65);
\node at (1.5,2) {\footnotesize{\text{$\partial\mathcal{C}$}}};
\draw (0,0) -- (0.4,-0.7);
\draw (0,0) -- (-0.4,-0.7);
\path [fill=gray!20,opacity=0.2](300:0.8)--(0,0)--(240:0.8) arc (240:300:0.8) -- cycle;
\draw [-,dashed] (-3,0) -- (3,0);
\node at (2.75,0.25) {\footnotesize{\text{$\{e\}^{\bot}$}}};
\node at (-2.5,0.75) {\footnotesize{\text{$H^{+}$}}};
\node at (0.7,2.45) {{\footnotesize{\text{$\mathcal{C}^{+}$}}}};
\node at (-2.5,-0.5) {\footnotesize{\text{$H^{-}$}}};
\node at (1,-0.6) {\footnotesize{\text{$\mathcal{C}^{-}$}}};
\draw[->] (0.75,-0.6) -- (0.1,-0.4);
\end{tikzpicture}
\caption{The geometrical situation in the proof of Lemma~\ref{lem:uniform_set}.}
\label{fig:cone}
\end{figure}

One then defines $L^-(x)=QL(Qx)$ for $x\in H^-=-H^+$, where $Q$ is the reflection in $e^\perp$. Finally, we have that $\Sigma=L^+(S\cap H^+)\cup L^{-}(S\cap H^{-})$ satisfies the assumptions of the Lemma: By \eqref{eq:S} and \eqref{eq:L} we have that
\begin{align*}
\mathscr{H}^\alpha\left(\Sigma\cap B(0,r)\right)\sim\mathscr{H}^\alpha\left(S\setminus e^\perp\cap B(0,r)\right)\sim r^\alpha,
\end{align*}
with implicit constants $m,\,M$ depending on $\lambda,\,\Lambda$, the Lipschitz constants of $L^+$ and its inverse, and $\varepsilon$. The proof is complete. 
\end{proof}
We next show that on the set $\Sigma$ thus constructed, non-zero continuous $(-\alpha)$-homogeneous functions are not $\mathscr{H}^\alpha$-integrable.
\begin{lemma}\label{lem:-alpha_hom}
Let $\Sigma$ be as given by Lemma~\ref{lem:uniform_set}. Then
\begin{align*}
\int_{\Sigma}|x|^{-\alpha}\dif\mathscr{H}^\alpha(x)=\infty.
\end{align*}
\end{lemma}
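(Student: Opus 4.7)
\medskip

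The plan is to reduce the divergence of the integral to the divergence of a logarithmic integral, by turning the pointwise singularity $|x|^{-\alpha}$ into a layer-cake sum that can be controlled by the lower Ahlfors bound $\mathscr{H}^\alpha(\Sigma\cap B(0,r))\geq m r^\alpha$ from Lemma~\ref{lem:uniform_set}. The key observation is that on the scale $r$, the function $|x|^{-\alpha}$ is of order $r^{-\alpha}$, while $\Sigma\cap B(0,r)$ carries $\mathscr{H}^\alpha$-mass of order $r^\alpha$, so that each dyadic annulus contributes a uniformly positive amount to the integral.

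Concretely, I would apply the Cavalieri/layer-cake formula for the nonnegative function $f(x)=|x|^{-\alpha}$ against the measure $\mathscr{H}^\alpha\mres\Sigma$ to obtain
\begin{equation*}
\int_{\Sigma}|x|^{-\alpha}\dif\mathscr{H}^\alpha(x)
=\int_0^\infty \mathscr{H}^\alpha\bigl(\Sigma\cap\{x:|x|<t^{-1/\alpha}\}\bigr)\dif t.
\end{equation*}
Restricting the $t$-integral to $[1,\infty)$ (where $t^{-1/\alpha}\in(0,1]$, so the lower bound of Lemma~\ref{lem:uniform_set} applies), one gets
\begin{equation*}
\int_{\Sigma}|x|^{-\alpha}\dif\mathscr{H}^\alpha(x)
\geq \int_1^\infty m\,(t^{-1/\alpha})^\alpha\dif t
= m\int_1^\infty \frac{\dif t}{t}=\infty.
\end{equation*}

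A fully equivalent dyadic route would be to fix an integer $K$ with $2^{K\alpha}>M/m$ and split $\Sigma$ into the annuli $A_j:=\Sigma\cap\bigl(B(0,2^{-jK})\setminus B(0,2^{-(j+1)K})\bigr)$ for $j\geq 0$. The two-sided bound of Lemma~\ref{lem:uniform_set} gives $\mathscr{H}^\alpha(A_j)\geq(m-M\,2^{-K\alpha})\,2^{-jK\alpha}$, while $|x|^{-\alpha}\geq 2^{(j+1)K\alpha}$ on $A_j$; thus each $A_j$ contributes at least the $j$-independent constant $2^{K\alpha}(m-M\,2^{-K\alpha})>0$, and summing over $j$ diverges. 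The only subtlety here is the choice of $K$ large enough to absorb the ratio $M/m$, which is precisely what one avoids by using the cleaner layer-cake argument above; beyond this, the step is entirely elementary, and there is no genuine obstacle once Lemma~\ref{lem:uniform_set} is in hand.
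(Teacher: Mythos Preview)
Your proof is correct; both the layer-cake argument and the dyadic alternative work, and the only input from Lemma~\ref{lem:uniform_set} you actually need is the lower bound $\mathscr{H}^\alpha(\Sigma\cap B(0,r))\geq mr^\alpha$.

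The paper takes a slightly different route: it argues by contradiction, assuming $|x|^{-\alpha}$ is $\mathscr{H}^\alpha$-integrable on $\Sigma$ and invoking dominated convergence to conclude that $\int_{\Sigma\cap B(0,r)}|x|^{-\alpha}\dif\mathscr{H}^\alpha\to 0$ as $r\searrow 0$; this is then contradicted by observing that $|x|^{-\alpha}\geq r^{-\alpha}$ on $B(0,r)$, so the integral is bounded below by $m$ for every $r\in(0,1]$. The underlying mechanism is identical to yours---each scale contributes mass at least $m$---but your direct layer-cake computation makes the logarithmic divergence explicit and avoids the detour through dominated convergence, at the cost of one extra line. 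Neither approach has any real advantage over the other here.
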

\begin{proof}
Suppose that $|\cdot|^\alpha\mres\Sigma$ is $\mathscr{H}^\alpha$-integrable. Then
\begin{align}\label{eq:plm}
	\int_{\Sigma\cap B(0,r)}|x|^\alpha\dif\mathscr{H}^\alpha(x)\rightarrow0
\end{align}
by the dominated convergence theorem. However, by Lemma~\ref{lem:uniform_set}, we have that the left hand side of \eqref{eq:plm} is bounded from below by $m>0$.
\end{proof}

\begin{proof}[Proof of \ref{itm:thm1.1_a}$\implies$\ref{itm:thm1.1_b} in Theorem~\ref{thm:main1}]
	We set $\theta=1$ in \ref{itm:thm1.1_a}.
	
\emph{Necessity of ellipticity}. We initially cover the first order case $k=1$. Assume that the estimate holds and $\A[D]$ is not elliptic. Without loss of generality, we can write that there exists $0\neq v\in V$ such that $\A[e_n]v=0$, where $(e_j)$ is an orthonormal basis of $\R^n$. We let $f(t)=|t|^{-\beta}$ for $t\in\R$ and $\beta=\frac{(1-s)(n-1)}{n-s}$, so that $f\in\lebe^1_{\locc}(\R)$. We then define $u_f(x)=f(x\cdot e_n)v$ for $x\in\R^n$, such that $\A[D]u_f=0$ in $\mathscr{D}^\prime(\R^n,V)$. To see that $u_f\in\lebe^1_{\locc}$, we write $x=(x^\prime,x_n)$ with respect to the basis $(e_j)_{j=1}^n$ and obtain
\begin{align*}
\int_{(-R,R)^n}|u_f|\dif\mathscr{L}^n&=\int_{(-R,R)^{n-1}}\int_{-R}^R f(x_n)|v|\dif x_n \dif x^\prime\\
&=|v|(2R)^{n-1}\int_{-R}^R f(t)\dif t<\infty.
\end{align*}
It thus follows by the Leibniz rule that $\tilde{u}_f=\rho u_f$ for $\rho\in\hold^\infty_c(\R^n)$ satisfies $\A[D]\tilde{u}_f\in\lebe^1(\R^n,W)$, so the right hand side of the assumed estimate is finite.

To proceed, we let $S\subset\R$ be as in Lemma \ref{lem:uniform_set} with $d=1$ and $\alpha=1-s$ and suppose that $\rho$ as above satisfies $\rho=1$ in $[0,1]^n$. Then, with $q=\frac{n-s}{n-1}$ and $\Sigma=[0,1]^{n-1}\times S$,
\begin{align*}
\int_{\Sigma}|\tilde{u}_f|^{q}\dif\mathscr{H}^{n-s}\geq \int_{(0,1)^{n-1}}\int_S f(x_n)^q|v|^q\dif\mathscr{H}^{\alpha}(x_n)\dif x^\prime=|v|^q\int_S|t|^{-\alpha}\dif\mathscr{H}^\alpha(t).
\end{align*}
To see that the last integral is infinite, one employs Lemma~\ref{lem:-alpha_hom}. To obtain a contradiction from the precise form of \ref{itm:thm1.1_a}, we note that the mollifications $\rho_{\varepsilon}*\widetilde{u}_{f}$ converge to $\widetilde{u}_{f}$ at all Lebesgue points of $\widetilde{u}_{f}$. Hence, should \ref{itm:thm1.1_a} hold, we obtain by Fatou's lemma the contradictory 
\begin{align}\label{eq:approximationargument}
\begin{split}
\int_{\Sigma}|\widetilde{u}_{f}|^{q}\dif\mathscr{H}^{n-s} & \leq \liminf_{\varepsilon\searrow 0}\int_{\Sigma}|\rho_{\varepsilon}*\widetilde{u}_{f}|^{q}\dif\mathscr{H}^{n-s}\\ 
& \leq c\|\mu\|_{\lebe^{1,n-s}(\R^{n})}^{\frac{n-1}{n-s}}\liminf_{\varepsilon\searrow 0}\|\A[D](\rho_{\varepsilon}*\widetilde{u}_{f})\|_{\lebe^{1}(\R^{n},W)} \\ & = c\|\mu\|_{\lebe^{1,n-s}(\R^{n})}^{\frac{n-1}{n-s}}\|\A[D](\widetilde{u}_{f})\|_{\lebe^{1}(\R^{n},W)}, 
\end{split}
\end{align}
so that \ref{itm:thm1.1_a} cannot hold indeed. 

We next assume that the order $k$ of $\A[D]$ is arbitrary and keep all relevant notation from the $k=1$ step. We now let $g(t)=|t|^{k-1-\beta}$ for $t\in\R$, which is clearly locally integrable, and let $u_g(x)=g(x_n)v$ for $x\in\R^n$, so that $u_g\in\lebe^1_{\locc}(\R^n,V)$, and, moreover, $\A[D]u_g=0$ in the sense of distributions. We also define 
\begin{align}\label{eq:utildedef}
\tilde{u}_g=\rho u_g,
\end{align}
so that $\tilde{u}_g=u_g$ in $(0,1)^n$. We next claim that $\A[D]\tilde{u}_g\in\lebe^1(\R^n,W)$, which is slightly more involved than in the first order case. We first note that $D^{k-1}u_g\in\lebe^1_{\locc}(\R^n,V\odot^{k-1}\R^n)$ by computing
\begin{align*}
D^{k-1}u_g(x)=\dfrac{\dif^{k-1} g}{\dif t^{k-1}}(x_n)v\otimes^{k-1}e_n=\frac{(\mathrm{sgn} (x_n))^{k-1}}{(k-1)!}f(x_n)v\otimes^{k-1}e_n
\end{align*}
and arguing as for the local integrability of $u_f$. By the \textsc{Deny-Lyons} Lemma \cite{DL}, we have that $D^ju_g$ is locally integrable for $0\leq j\leq k-1$. We then write
\begin{align*}
\A[D]\tilde{u}_g=\sum_{j=0}^{k-1}B_j[D^j u,D^{k-j}\rho],
\end{align*}
where $B_j$ are bilinear pairings depending on $\A[D]$ only. Since $\rho$ is smooth with compact support, it immediately follows that $\A[D]\tilde{u}_g\in\mathscr{M}(\R^n,W)$. With $\alpha=1-s$,
\begin{align*}
\int_\Sigma |D^{k-1}\tilde{u}_g|^q\dif\mathscr{H}^{n-s}=\left(\frac{|v\otimes^{k-1}e_n|}{(k-1)!}\right)^q\int_S f(t)\dif\mathscr{H}^\alpha(t)=\infty,
\end{align*}
which concludes the proof of necessity of ellipticity upon performing a mollification argument analogous to \eqref{eq:approximationargument}.

\emph{Necessity of cancellation}. Suppose that $\A[D]$ is elliptic and non-cancelling. By \cite[Lem.~2.5]{R}, there exists a map $u\in\hold^\infty(\R^n\setminus\{0\},V)\cap\lebe^1_{\locc}(\R^n,V)$ such that $\A[D]u=\delta_0w$ for some $0\neq w\in W$ and $D^{k-1}u$ is homogeneous of degree $1-n$. Since $w\neq 0$, $D^{k-1}u$ must be non-zero at a point $x_0\in\mathbb{S}^{n-1}$. By continuity, $D^{k-1}u$ must be bounded away from $0$ in a relatively open spherical neighbourhood $\mathcal{N}$ of $x_0$ in $\mathbb{S}^{n-1}$. By homogeneity, this implies, for some $c>0$, the bound
\begin{align*}
|D^{k-1}u(x)|^\frac{n-s}{n-1}\geq c|x|^{s-n}\quad\text{ for all }x\in \mathcal{C},
\end{align*}
where $\mathcal	{C}$ is the open double cone $\R_* \mathcal{N}$.

Let $\Sigma\subset\mathcal{C}$ be as in Lemma \ref{lem:uniform_set}. Lemma~\ref{lem:-alpha_hom} implies that $D^{k-1}u$ is not $\lebe^{\frac{n-s}{n-1}}$-integrable with respect to $\mu=\mathscr{H}^{n-s}\mres\Sigma\in\lebe^{1,n-s}$. This contradicts the assumed inequality and concludes the proof of this case, as the measure $\A[D]u$ is clearly bounded.
\end{proof} 

This completes the proof of necessity of ellipticity and cancellation for the trace inequality in Theorem~\ref{thm:main1}. Together with the sufficiency proof in Section~\ref{sec:suff_EC}, we have completed the proof of Theorem~\ref{thm:main1}.

\section{Codimension $s=1$: Proof of Theorem~\ref{thm:main2}}\label{sec:s=1}
In the case of critical codimension $s=1$, there is little hope so far to identify the $k$-th order operators $\A[D]$ for which inequalities
\begin{align*}
	\|D^{k-1}u\|_{\lebe^1(\dif \mu)}\leq c\|\mu\|_{\lebe^{1,n-1}}\|\A[D]u\|_{\lebe^1(\dif\mathscr{L}^n)}\quad\text{ for }u\in\hold^\infty_c(\R^n,V)
\end{align*}
hold. We thus restrict our attention to the purely $(n-1)$-dimensional case, by which we mean $\mu=\mathscr{H}^{n-1}\mres \Sigma$ for $\Sigma\leq \R^n$ with $\dim\Sigma=n-1$ (of course, if this is achieved, one likely can use the ideas to tackle the case $\mu=(f\mathscr{H}^{n-1}\mres S)$ for countably $(n-1)$-rectifiable $S\subset\R^n$ and $f\in\lebe^\infty(S;\dif\mathscr{H}^{n-1})$). Even with this restriction, the problem seems extremely challenging: In Section~\ref{sec:prop1.2} we give a necessary condition (strong cancellation); in Section~\ref{sec:ext} we give a sufficient condition ($\C$-ellipticity); finally, in Section~\ref{sec:pf_s=1}, we conclude the solution of the purely $(n-1)$-dimensional case for first order operators, $k=1$ (see Theorem~\ref{thm:main2}).
\subsection{Proof of Proposition~\ref{prop:int_tr_implies_En-2canc}}\label{sec:prop1.2} In view of Proposition~\ref{prop:int_tr_implies_En-2canc}, we first record the next
\begin{lemma}\label{lem:aux_nec_nu}
  Let $\A[D]$ as in \eqref{eq:form} be elliptic, $2\leq d\leq n$ be an integer, and $H\leq\R^n$ be a $d$-dimensional subspace of $\R^n$. Suppose that $w\in\mathrm{im\,}\A[\xi]$ for all $\xi\in H \setminus \{0\}$.

Then there exists a map $u\in\hold^\infty(\R^n\setminus H^\perp,V)$ and a $(1-d)$-homogeneous map $F\in\hold^\infty(H\setminus\{0\},V\odot^{k-1}\R^n)$ such that
\begin{align*}
\A[D]u=\left(\mathscr{H}^{n-d}\mres H^\perp\right)w\quad\text{and}\quad D^{k-1}u(g+h)=F(h)
\end{align*}
for all $h\in H$, $g\in H^\perp$.
\end{lemma}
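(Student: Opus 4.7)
The plan is to build $u$ by Fourier analysis. Since the right-hand side $\mu w$ is translation-invariant in the $H^\perp$-direction, I expect $u$ to inherit this invariance, which immediately gives $u(g+h)=v(h)$ for some $v\colon H\to V$ with $F := D^{k-1}v$. The claimed $(1-d)$-homogeneity of $F$ will then follow by scaling in the Fourier variable, and the required regularity is a standard consequence of the smoothness of the pseudoinverse of $\A[\xi]$ off the origin.

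Choose orthonormal coordinates with $H = \operatorname{span}\{e_1,\ldots,e_d\}$; then $\mu = \delta_0^{\otimes d}\otimes\mathscr{L}^{n-d}$, and a direct computation yields $\hat\mu = (2\pi)^{(n-d)/2}\,\mathscr{H}^d\mres H$ in $\mathscr{S}^\prime(\R^n)$. Since $\A[D]$ is elliptic, the Moore--Penrose pseudoinverse $\A^\dagger[\xi] := (\A^*[\xi]\A[\xi])^{-1}\A^*[\xi]$ is smooth and $(-k)$-homogeneous on $\R^n\setminus\{0\}$, and $\A[\xi]\A^\dagger[\xi]$ is the orthogonal projection onto $\image\A[\xi]$; by hypothesis, this projection fixes $w$ for every $\xi\in H\setminus\{0\}$, i.e.\ $\A[\xi]\A^\dagger[\xi]w=w$ on $H\setminus\{0\}$.

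I would then define $\hat u$ as the tempered distribution obtained by pairing the density $\A^\dagger[\xi]w$ with the surface measure $\mathscr{H}^d\mres H$. For $k<d$ this is literally a Radon measure of polynomial growth on $\R^n$; for $k\geq d$ one uses the standard theory of homogeneous tempered distributions (analytic continuation in the homogeneity parameter, or Taylor subtraction near the origin) to realize $\A^\dagger[\cdot]w$ as an element of $\mathscr{S}^\prime(H)$. Multiplying by the polynomial $\A[\xi]$ and using $\A[\xi]\A^\dagger[\xi]w=w$ on $H\setminus\{0\}$ gives $\A[\xi]\hat u = c\,w\,\hat\mu$, so that $u := \mathscr{F}^{-1}\hat u$ satisfies $\A[D]u = \mu w$ after absorbing the constant. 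Since $\operatorname{supp}\hat u\subset H$, translation of $u$ by $g\in H^\perp$ corresponds to multiplication of $\hat u$ by $e^{ig\cdot\xi}\equiv 1$ on $H$; hence $u$ is $H^\perp$-translation invariant and $u(g+h)=v(h)$ for some distribution $v$ on $H$. Setting $F:=D^{k-1}v$, the scaling $\xi\mapsto \xi/\lambda$ combined with the $(-k)$-homogeneity of $\A^\dagger[\cdot]w$ and the $d$-dimensional character of $\mathscr{H}^d\mres H$ shows that $v$ is $(k-d)$-homogeneous on $H$, so $F$ is $(1-d)$-homogeneous. Smoothness of $v$ on $H\setminus\{0\}$ -- and hence $u\in\hold^\infty(\R^n\setminus H^\perp,V)$ -- follows from the smoothness of $\A^\dagger[\cdot]w$ on $H\setminus\{0\}$ via the standard regularity theory for inverse Fourier transforms of homogeneous distributions.

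The main obstacle is making the Fourier-analytic manipulation fully rigorous when $k\geq d$, where $\A^\dagger[\cdot]w$ is not locally integrable near $0\in H$. Working within the framework of homogeneous tempered distributions introduces polynomial ambiguities in $v$ of degree at most $k-d$; since $d\geq 2$, one has $k-d\leq k-2<k-1$, so these polynomial corrections are killed by taking $(k-1)$-many derivatives, leaving the clean $(1-d)$-homogeneity of $F$. An alternative route, if one wishes to avoid any regularization entirely, is to define $u$ only modulo polynomials in $x$ and to verify that it is the $(k-1)$-jet that is unambiguous, which is exactly what the statement asks for.
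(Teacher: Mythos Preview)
Your proposal is correct and takes essentially the same approach as the paper: define $u$ as the inverse Fourier transform of the tempered distribution $(\mathscr{H}^d\mres H)\A^\dagger[\cdot]w$, use the hypothesis to get $\A[\xi]\A^\dagger[\xi]w=w$ on $H\setminus\{0\}$, and read off the $H^\perp$-invariance and $(1-d)$-homogeneity of $D^{k-1}u$ from the support and scaling of $\widehat u$. The paper dispatches the regularization issue you flag for $k\geq d$ by invoking \cite[Lem.~2.1]{BVS} and the general theory of homogeneous distributions \cite[Ch.~3,~Ch.~7]{HormI}, which is precisely the extension-with-polynomial-ambiguity mechanism you describe; your observation that the degree of the ambiguity is at most $k-d<k-1$, so that $F=D^{k-1}v$ is unaffected, makes explicit a point the paper leaves implicit.
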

\begin{proof}
We define, for $h\in H$, $g\in H^\perp$
\begin{align*}
u(h+g)&=\mathscr{F}^{-1}\left[\left(\mathscr{H}^{d}\mres H\right)\A^\dagger[\cdot]w\right](g+h)\\
&=c_n\int_{\R^n}e^{\imag (g+h)\cdot\xi}\A^\dagger[\xi]w\dif\,(\mathscr{H}^{d}\mres H)(\xi)\\
&=c_n\int_{H}e^{\imag h\cdot\xi}\A^\dagger[\xi]w\dif\mathscr{H}^{d}(\xi)\\
&=\mathscr{F}^{-1}_{H}(\A^{\dagger}[\cdot]w)(h),
\end{align*}
where $\A^\dagger[\xi]=(\A^*[\xi]\A[\xi])^{-1}\A^*[\xi]$ for $\xi\neq0$ and the right-hand side is a tempered distribution in $\hold^\infty(H^\perp\zeroset,V)$ by the proof of \cite[Lem.~2.1]{BVS}. More precisely, $\A[\cdot]w$ is a homogeneous distribution in $\R^n\setminus\{0\}$, which can be extended to a tempered distribution (see, e.g., \cite[Ch.~3,~Ch.~7]{HormI} for the general theory revolving around these facts). Using similar ideas, we can extrapolate that
\begin{align*}
D^{k-1}u(h+g)=\mathscr{F}^{-1}_{H}(\xi\mapsto\A^{\dagger}[\xi]w\otimes\xi^{\otimes(k-1)})(h)=:F(h),
\end{align*}
where $F$ has the required regularity and we can further use \cite[Lem.~2.1]{BVS} to see that $F$ is $(1-d)$-homogeneous.

To complete the proof, note that 
\begin{align*}
\widehat{u}(\xi)&=(\mathscr{H}^d\mres H)\A^\dagger[\xi]w\\
\widehat{\A[D]u}(\xi)&=(\mathscr{H}^d\mres H)\A[\xi]\A^\dagger[\xi]w.
\end{align*}
By recalling that $\A[\xi]\A^\dagger[\xi]$ is the orthogonal projection onto $\mathrm{im\,}\A[\xi]$, the conclusion follows.
\end{proof}
We can now conclude the:
\begin{proof}[Proof of Proposition~\ref{prop:int_tr_implies_En-2canc}]
To prove ellipticity of $\A[D]$, one can choose $g(t)=|t|^{k-\frac{3}{2}}$ in the proof of Theorem~\ref{thm:main1} and define $\tilde u_g$ in the same manner, cf.~\eqref{eq:utildedef}. It is then clear that $D^{k-1}\tilde u_g$ admits no trace on $\Sigma=e_n^\perp$.

If $\A[D]$ is elliptic, we assume that the embedding holds and that \eqref{eq:n-2_canc_intro} fails for some $2$-dimensional subspace $H\leq\R^n$ and $0\neq w\in \mathrm{im\,}\A[\xi]$ for all $\xi\in H\setminus\{0\}$. We let $u$ be as in Lemma~\ref{lem:aux_nec_nu} with $d=2$. Then 
\begin{align*}
D^{k-1}u(h+g)=F(h)=D^{k-1}\mathscr{F}^{-1}_H(\A^\dagger[\cdot]w)(h)\qquad\text{for }h\in H,\,g\in H^\perp
\end{align*}
is $(-1)$-homogenous and smooth in $H\setminus\{0\}$. Note that if $F\equiv0$, then $w\delta_0=\A[D]u=0$, so that there exists $\eta\in\mathbb{S}^{n-1}\cap H$ such that $F(\eta)\neq0$. By continuity of $F$ on $\mathbb{S}^{n-1}$ and homogeneity, we can assume that $\eta\neq\pm\nu$. 

Up to a change of coordinate, we can assume that $\nu=e_1$ and $\eta=e_2$. 
Let $Q$ be the unit cube in these coordinates. For a function $\rho\in\hold^\infty_c(\R^n)$ so that $\rho=1$ in $Q$, we have by the argument in the proof of Theorem~\ref{thm:main1} that $\rho u$ is admissible for the estimate with $|\A[D](\rho u)|(\R^n)<\infty$. We then have
\begin{align*}
\int_{Q\cap\Sigma}|D^{k-1}u|\dif\mathscr{H}^{n-1}=\int_0^1\int_{Q\cap H^\perp}|F(te_2)|\dif\mathscr{H}^{n-2}\dif t=|F(e_2)|\int_0^1\frac{\dif t}{t}=\infty,
\end{align*}
which concludes the proof.
\end{proof}
\subsection{Exterior traces}\label{sec:ext}
We next turn to the equivalence between \ref{it:int_k=1_est_strong} and \ref{it:int_k=1_Cell} in Theorem~\ref{thm:main2}, which we prove for operators of arbitrary order:
\begin{theorem}\label{thm:ext}
	Let $\A[D]$ be an operator as in \eqref{eq:form}. Then the following are equivalent: 
	\begin{enumerate}
		\item $\A[D]$ is \emph{$\mathbb{C}$-elliptic}.
		\item For every $(n-1)$-dimensional hyperplane $\Sigma\subset\R^{n}$ there exists a constant $c>0$ such that for all $u\in\hold_{c}^{\infty}(\R^{n},V)$ there holds 
		\begin{align*}
		\|D^{k-1}u\|_{\lebe^1(\Sigma;\dif\mathscr{H}^{n-1})} \leq c \|\A[D]u\|_{\lebe^1(\Sigma^+;\dif\mathscr{L}^{n})}. 
		\end{align*}
		Here $\Sigma^+$ is a half-space with boundary $\Sigma$.
	\end{enumerate}
\end{theorem}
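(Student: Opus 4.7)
The plan is to prove the two implications separately, with sufficiency requiring substantially more work.

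\emph{Necessity} ((ii)$\Rightarrow$(i)): Since $\|\A[D]u\|_{\lebe^1(\Sigma^+)} \leq \|\A[D]u\|_{\lebe^1(\R^n)}$, the assumed inequality on every hyperplane $\Sigma$ implies the interior trace inequality \eqref{eq:int_tr_k}, whence $\A[D]$ is elliptic by Proposition~\ref{prop:int_tr_implies_En-2canc}. It remains to rule out elliptic-but-not-$\C$-elliptic. Suppose there exist $\xi = \eta + \imag\tau \in \C^n$ with $\tau \neq 0$ and $v = v_1 + \imag v_2 \in V + \imag V \setminus \{0\}$ satisfying $\A[\xi]v = 0$. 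After a rigid motion, take $\Sigma := \tau^\perp$ and $\Sigma^+ := \{x\cdot\tau > 0\}$, and choose $u_0$ as $\Re(\e^{\imag x\cdot\xi}v)$ or $\Im(\e^{\imag x\cdot\xi}v)$ so that $D^{k-1}u_0|_\Sigma \not\equiv 0$; this is always possible since the tensor $(\imag\xi)^{\otimes(k-1)}v$ is non-zero. Then $\A[D]u_0 = 0$ classically. Letting $\phi \in \hold_c^\infty(\R^n)$ be a bump with $\phi \equiv 1$ on $B(0,1)$ and $\spt\phi \subset B(0,2)$, set $u_R := \phi(\cdot/R) u_0$. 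Since $\A[D]u_0 = 0$, Leibniz gives $\spt\A[D]u_R \subset \{R \leq |x| \leq 2R\}$ with
\begin{align*}
|\A[D]u_R(x)| \leq c_\xi R^{-1}\e^{-x\cdot\tau}\quad\text{for }R\leq |x|\leq 2R.
\end{align*}
Integrating the exponential decay in the normal direction $\tau$ on $\Sigma^+$ yields $\|\A[D]u_R\|_{\lebe^1(\Sigma^+)} \leq c R^{n-2}$, while on $\Sigma$ the exponential is identically one so the real-valued oscillatory trace $D^{k-1}u_0|_\Sigma$ has mean magnitude bounded away from zero, giving $\|D^{k-1}u_R\|_{\lebe^1(\Sigma)} \geq c R^{n-1}$. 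The ratio diverges as $R\to\infty$, contradicting (ii).

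\emph{Sufficiency} ((i)$\Rightarrow$(ii)): After a rigid motion, take $\Sigma = \{x_n = 0\}$ and $\Sigma^+ = \{x_n > 0\}$. First, I would establish the auxiliary Gagliardo-type estimate
\begin{align*}
\|D^{k-1}u\|_{\lebe^1(\Sigma;\dif\mathscr{H}^{n-1})} \leq c\bigl(\|D^{k-1}u\|_{\lebe^1(\Sigma^+)} + \|\A[D]u\|_{\lebe^1(\Sigma^+)}\bigr)
\end{align*}
for $u \in \hold_c^\infty(\R^n, V)$. This is the natural higher-order extension of the $\C$-elliptic trace theorem of \cite{BDG}, and its proof combines the Smith decomposition $D^d = \mathbb{B}[D] \circ \A[D]$ from Proposition~\ref{prop:smith} (to rewrite purely tangential top-order derivatives of $u$ as derivatives of $\A[D]u$) with the classical slicing identity $w(x',0) = -\int_0^\infty \partial_n w(x',t)\dif t$. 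Second, a scaling argument removes the superfluous term. For $u_\lambda(x) := u(\lambda x)$, a direct change of variables yields
\begin{align*}
\|D^{k-1}u_\lambda\|_{\lebe^1(\Sigma)} &= \lambda^{k-n}\|D^{k-1}u\|_{\lebe^1(\Sigma)},\\
\|D^{k-1}u_\lambda\|_{\lebe^1(\Sigma^+)} &= \lambda^{k-1-n}\|D^{k-1}u\|_{\lebe^1(\Sigma^+)},\\
\|\A[D]u_\lambda\|_{\lebe^1(\Sigma^+)} &= \lambda^{k-n}\|\A[D]u\|_{\lebe^1(\Sigma^+)}.
\end{align*}
Applying the auxiliary estimate to $u_\lambda$ and dividing by $\lambda^{k-n}$ gives
\begin{align*}
\|D^{k-1}u\|_{\lebe^1(\Sigma)} \leq c\lambda^{-1}\|D^{k-1}u\|_{\lebe^1(\Sigma^+)} + c\|\A[D]u\|_{\lebe^1(\Sigma^+)},
\end{align*}
and sending $\lambda \to \infty$ kills the first term, yielding (ii).

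The main obstacle is establishing the auxiliary estimate in Step~1 of sufficiency. The difficulty is Ornstein's non-inequality: $\A[D]u$ does not $\lebe^1$-control $D^k u$, so one cannot simply integrate $\partial_n D^{k-1}u$ along normal fibers to estimate the trace. The $\C$-ellipticity is precisely what lets one circumvent this: the Smith reduction rewrites every top-order derivative of $u$ as a differential expression in $\A[D]u$, which on the half-space can be integrated by parts along normal slices, with boundary contributions absorbed into the auxiliary $\|D^{k-1}u\|_{\lebe^1(\Sigma^+)}$-term that the subsequent scaling argument is designed to eliminate. Working this out cleanly in arbitrary order $k$ (the paper \cite{BDG} treats $k=1$) is the technical heart of the proof.
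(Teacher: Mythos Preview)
Your necessity argument is correct and in fact different from the paper's: the paper builds a counterexample from a holomorphic branch of $\log$ (so that $D^{k-1}u_\varepsilon$ develops a $1/z$-type singularity on $\Sigma$ after multiplication by a fixed cutoff), whereas you exploit the exponential decay of $e^{\imag x\cdot\xi}$ in $\Sigma^+$ and scale the cutoff. Both work; yours is slightly more elementary. One small point: you should make explicit that when $\eta$ and $\tau$ are linearly dependent the restriction $D^{k-1}u_0|_\Sigma$ is a non-zero constant rather than genuinely oscillatory, but your conclusion $\|D^{k-1}u_R\|_{\lebe^1(\Sigma)}\gtrsim R^{n-1}$ survives.

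The sufficiency argument has a genuine gap in Step~1. First, note that by your own Step~2 the auxiliary estimate is \emph{equivalent} to the theorem, so no reduction has taken place; all the work is still there. More importantly, the route ``Smith decomposition plus normal slicing'' does not close. Smith gives $D^{d}=\mathbb{B}[D]\circ\A[D]$ with some $d\geq k$, so after iterating the fundamental theorem of calculus $(d-k+1)$ times you can write
\[
D^{k-1}u(x',0)=c\int_{0}^{\infty}t^{d-k}\,\partial_{n}^{\,d-k+1}D^{k-1}u(x',t)\dif t=c\int_{0}^{\infty}t^{d-k}\,\mathbb{B}'[D]\A[D]u(x',t)\dif t,
\]
with $\mathbb{B}'[D]$ of order $d-k$. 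Integrating the normal derivatives in $\mathbb{B}'[D]$ by parts against the weight $t^{d-k}$ is harmless, but the \emph{tangential} derivatives of $\A[D]u$ cannot be removed: there is nothing in $x'$ to integrate them against, and $\|D_{x'}^{m}\A[D]u\|_{\lebe^{1}(\Sigma^{+})}$ is not controlled by $\|\A[D]u\|_{\lebe^{1}(\Sigma^{+})}+\|D^{k-1}u\|_{\lebe^{1}(\Sigma^{+})}$ for $m\geq 1$ (Ornstein once more). Already for $k=1$ and $\A[D]=\E$, the naive slicing yields $\|u\|_{\lebe^{1}(\Sigma)}\leq c\|\E u\|_{\lebe^{1}(\Sigma^{+})}+c\|D_{x'}u\|_{\lebe^{1}(\Sigma^{+})}$, and the last term scales exactly like $\|\E u\|_{\lebe^{1}(\Sigma^{+})}$, so your Step~2 cannot remove it.

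The paper avoids this by an argument that does not pass through any pointwise representation of $D^{k-1}u|_\Sigma$. It covers a neighbourhood of $\Sigma$ by dyadic Whitney cubes $\{Q_i^j\}$ accumulating on $\Sigma$, projects $u$ onto the finite-dimensional null space $\ker\A[D]$ on each cube (this is where $\mathbb{C}$-ellipticity enters, via Proposition~\ref{prop:smith}\ref{itm:smith_c}), and defines trace-approximation operators $T_ju$ by patching these projections. The Cauchy increments $\|D^{k-1}(T_{j+1}u-T_ju)\|_{\lebe^1(\Sigma)}$ are then controlled by $\|\A[D]u\|_{\lebe^1(\mathcal{S}^j)}$ on the $j$-th dyadic strip, using the Poincar\'e inequality \eqref{eq:Poincare} together with \emph{inverse estimates} ($\lebe^1\leftrightarrow\lebe^\infty$ on cubes) that are available precisely because the null space is finite-dimensional. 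Summing in $j$ gives the trace inequality directly. This Whitney--projection machinery is the missing ingredient in your Step~1.
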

Theorem~\ref{thm:ext} generalises the first order case as obtained by \textsc{Breit, Diening} and the first author \cite[Thm.~1.1]{BDG} in a natural way. The method of proof then is centered around extending a given function $u$ from $\Sigma^{+}$ to the entire $\R^{n}$ and   carefully employing $\lebe^{1}(\Sigma)$-Cauchy estimates for a suitable replacement of $u$ close to $\Sigma$. This replacement in turn is obtained by locally projecting $u$ onto $\ker(\A[D])$ on cubes close to $\Sigma$. At this stage, it is crucial to remark that $\mathbb{C}$-ellipticity is equivalent to $\A[D]$ having \emph{finite dimensional nullspace} (in $\mathscr{D}'(\R^{n},V)$) and consisting of polynomials of a \emph{bounded degree} (see Proposition~\ref{prop:smith}). Roughly speaking, since such polynomials form a finite dimensional vector space, we are thus in position to utilise \emph{inverse estimates}. These finally lead to the desired $\lebe^{1}(\Sigma)$-Cauchy property of the replacement sequence and hereafter the summability of the traces of $D^{k-1}u$ along $\Sigma$. Because the second part of Theorem~\ref{thm:ext} follows almost trivially from the first part, we can assert that \emph{$\mathbb{C}$-ellipticity is equivalent to boundary estimates, and moreover is sufficient for interior estimates}. 

In general, it is to be expected that boundary estimates are harder to obtain than interior estimates.  In fact, in the former case $\A[D]$ only provides control on $u$ from the interior of $\Sigma^{+}$, and so sequences of admissible maps might develop singularities along $\Sigma$. However, in the latter, interior trace case, $\A[D]$ provides control on $u$ on both sides of $\Sigma$. Hence, we expect that interior trace should be obtained under \emph{weaker} conditions than boundary traces (see Proposition~\ref{prop:int_tr_implies_En-2canc}).
\begin{proof}[Proof of Theorem~\ref{thm:ext}]
	Assume that $\A[D]$ is $\C$-elliptic. The trace inequality follows by an extension of the ideas in \cite{BDG}, coupled with the Poincar\'e-type inequality \cite[Prop.~4.2]{GR}. The presentation here is very streamlined and we refer the reader to \cite[Sec.~3-4]{BDG} and \cite[Sec.~4]{GR} for more detail, particularly, the introduction of basic tools used in the following arguments.
	
As shall be clear from the proof below, there is no loss of generality in assuming that $\Sigma=\R^{n-1}\times\{0\}$. We will use a coordinate notation $x=(x^\prime,x_n)$, with $x^\prime\in\R^{n-1}$, $x_n\in\R$. By a standard scaling argument, it suffices to prove that
	\begin{align*}
	\int_{\R^{n-1}}|D^{k-1}u|\dif x^\prime\leq c \int_{\R^{n}_+}|\A[D]u|\dif x\qquad\text{for all }u\in\hold^\infty_c(Q,V),
	\end{align*}
	where $Q=(-1,1)^n$ denotes the unit cube and $\R^{n}_+=\{x_n>0\}$. We will write $e_n=(0,1)$ for the unit normal to $\Sigma$.
	
        We now set the geometric scene: For $j=1,2,\ldots$, we write $\mathcal{S}^j$ for the strip $\{2^{-j}\leq x_n\leq 2^{-(j -1)}\}$. We cover $\mathcal{S}^j\cap Q$ with a collection of dyadic closed cubes $\{S_i^j\}_{i=1}^{2^{j(n-1)}}$ of sidelength $2^{-j}$ which have faces parallel to $Q$ and only intersect at faces. We write $Q_i^j$ for the open cubes that are obtained as follows: Translate $S_i^j$ by $-\frac{3}{2}\cdot2^{-j}e_n$ - so that its center lies on the hyperplane \(\Sigma\) - and dilate the interior of the resulting cube by a factor of $\frac{8}{7}$. In particular, the union $\mathcal{Q}^j=\bigcup_{i}Q_i^j$ covers $\Sigma\cap Q$ and the rectangle $R^j=\{x\in Q\colon -2^{-(j+1)}\leq x_n\leq 2^{-(j + 1)}\}$.
	
	For each $j\geq1$ we define partitions of unity $\{\varphi_i^j\}_i$ associated with $\{Q_i^j\}_i$ which satisfy $\sum_i\varphi_i^j=1$ in $R^j$ and $|D^l\varphi_i^j|\leq c2^{jl}$ for $l=0,1,\ldots k$. To localise near $\Sigma$, we will use functions $\rho_j\in\hold^\infty(\R^n)$ such that $\rho_j=1$ in $\mathcal{Q}^j$, $\rho_j(x)=0$ if $x_n\geq2^{-j}$, and $|D^l\rho_j|\leq c2^{jl}$ for $l\in\{0,1,\ldots, k\}$, $c>0$ being a universal constant.
	
	Let $\pi_C\colon\lebe^2(C)\rightarrow \ker(\A[D])$ denote the $\lebe^2$-orthogonal projection onto $\ker(\A[D])$, where $C\subset\R^{n}$ is a non-degenerate cube. Then we record in advance from \cite[Prop.~4.2]{GR} that there exists a constant $c>0$ independent of $u$ such that 
\begin{align}\label{eq:Poincare}
\sum_{l=1}^{k}\frac{\|D^{l}(u-\pi_{c}u)\|_{\lebe^{1}(C,V\odot^{l}\R^{n})}}{\ell(C)^{k-l}}\leq c\|\A[D]u\|_{\lebe^{1}(\R^{n},W)}, 
\end{align}	
where $\ell(C)$ is the sidelength of $C$. For the following, we abbreviate $\pi_i^j=\pi_{S_i^j}$. Before we proceed, let us note that building on \eqref{eq:Poincare} and the finite dimensional nullspace of $\A[D]$, it was established in \cite[Lem.~4.4]{GR} that there exists a constant $c=c(\A)>0$ such that if $Q_{i}^{j}$ and $Q_{m}^{j+1}$ have non-empty intersection, then there holds 
\begin{align}
\|D^{l}(\pi_{i}^{j}u-\pi_{m}^{j+1}u)\|_{\lebe^{1}(S_{i}^{j})}\leq c \ell(Q_{i}^{j})^{k-l}\|\A[D]u\|_{\lebe^{1}(\mathscr{N}(Q_{i}^{j}))}. 
\end{align}
Here, $\mathscr{N}(Q_{i}^{j})$ denotes the collection of all neighbouring cubes of $Q_{i}^{j}$, i.e., all cubes $Q_{i'}^{j}$ which have non-empty intersection with $Q_{i}^{j}$. By construction, for each $Q_{i}^{j}$ the number of cubes contained in $\mathscr{N}(Q_{i}^{j})$ is bounded by $n$ only.

	We then consider the trace approximation operator
	\begin{align*}
	T_ju=\rho_j\sum_i \varphi_i^j\pi_i^ju+(1-\rho_j)u.
	\end{align*}
	As in \cite[Sec.~4]{BDG}, we expect that $T_ju\rightarrow u$ is in several topologies, for which it is easier to first show that $(T_ju)_j$ is Cauchy in $\lebe^{1}(\Sigma)$. To this end, we record that
	\begin{align*}
	T_{j+1}u-T_ju&=(\rho_j-\rho_{j+1})\left(u-\sum_i \varphi_i^j\pi_i^ju\right)+\rho_{j+1}\left( \sum_m \varphi_m^{j+1}\pi_m^{j+1}u- \sum_i \varphi_i^j\pi_i^ju\right)\\
	&=(\rho_j-\rho_{j+1})\sum_i \varphi_i^j(u-\pi_i^ju)+\rho_{j+1}\left( \sum_{i,m} \varphi_m^{j+1}\varphi_i^j(\pi_m^{j+1}u-\pi_i^ju)\right),
	\end{align*}
        where in the second equality we used the summability to $1$ on $R^{j+ 1} \subset R^j$ of both partitions of unity. We will use this identity near $\Sigma$, where $\rho_j=1=\rho_{j+1}$, so the first term vanishes.
	
	The proof of the trace inequality is divided in a few steps:

	\emph{Step 1}. We show that $(D^{k-1}T_ju)_j$ is Cauchy in $\lebe^1(\Sigma)$. We note that in $\mathcal{Q}^{j+1}$ we have that
	\begin{align*}
	\mathbf{T}=D^{k-1}(T_{j+1}u-T_ju)&=\sum_{i,m} D^{k-1}[ \varphi_m^{j+1}\varphi_i^j(\pi_m^{j+1}u-\pi_i^ju)]\\
	&=\sum_{i,m}\sum_{l=0}^{k-1}\mathbf{B}_l[D^{k-l}(\varphi_m^{j+1}\varphi_i^j);D^l(\pi_m^{j+1}u-\pi_i^ju)],
	\end{align*}
	where $\mathbf{B}_l[\;{\cdot}\; ;\;{\cdot}\;]$ are bilinear pairings given by the Leibniz rule that depend only on $\A[D]$. We record that $\|D^{k-l}(\varphi_m^{j+1}\varphi_i^j)\|_{\lebe^{\infty}}\leq c2^{j(k-l)}$, a fact which also follows from the Leibniz rule. We then estimate 
	\begin{align*}
	\|\mathbf{T}\|_{\lebe^1(\Sigma)}&\leq c\sum_{i,m\colon Q_i^j\cap Q_m^{j+1}\neq\emptyset}\sum_{l=0}^{k-1}2^{j(k-l-1)}\|D^l(\pi_m^{j+1}u-\pi_i^ju)\|_{\lebe^1(\Sigma\cap Q_i^j)}\\
	&\leq c\sum_{i,m\colon Q_i^j\cap Q_m^{j+1}\neq\emptyset}\sum_{l=0}^{k-1}2^{j(k-l-n)}\|D^l(\pi_m^{j+1}u-\pi_i^ju)\|_{\lebe^\infty(\Sigma\cap Q_i^j)}\\
	&\leq c\sum_{i,m\colon Q_i^j\cap Q_m^{j+1}\neq\emptyset}\sum_{l=0}^{k-1}2^{j(k-l-n)}\|D^l(\pi_m^{j+1}u-\pi_i^ju)\|_{\lebe^\infty(\bar\Sigma^+\cap Q_i^j)}\\
	&\leq c\sum_{i,m\colon Q_i^j\cap Q_m^{j+1}\neq\emptyset}\sum_{l=0}^{k-1}2^{j(k-l)}\|D^l(\pi_m^{j+1}u-\pi_i^ju)\|_{\lebe^1(\Sigma^+\cap Q_i^j)}\\
	&\leq c\sum_{i,m\colon Q_i^j\cap Q_m^{j+1}\neq\emptyset}\sum_{l=0}^{k-1}2^{j(k-l)}\|D^l(\pi_m^{j+1}u-\pi_i^ju)\|_{\lebe^1(S_i^j)}\\
	&\leq c\|\A[D]u\|_{\lebe^1(\mathcal{S}^j)}.
	\end{align*}
For the fourth inequality, we used equivalence of all norms \emph{on finite dimensional spaces of fixed dimension} together with the suitable scaling, whereas in the last inequality we utilised the chain control lemma from \cite[Lem.~4.4]{GR}. We then have that
	\begin{align*}
	\|D^{k-1}(T_{j+s}u-T_j u)\|_{\lebe^1(\Sigma)}\leq \|\A[D]u\|_{\lebe^1(\{2^{-j-s}\leq x_n\leq 2^{-j}\})}.
	\end{align*}
	Since $\A[D]u\in\hold^\infty_c(Q,W)$, the claim of Step 1 follows.
	
	\emph{Step 2}. We show that $D^{k-1}T_ju\rightarrow D^{k-1}u$ in $\lebe^1(\Sigma)$. We have that $(D^{k-1}T_ju)_j$ converges in $\lebe^1(\Sigma)$, but it is not yet clear that the limit is $D^{k-1}u\restriction_\Sigma$. To prove that this is the case, we will show that $D^{k-1}T_ju\rightarrow u$ uniformly in $\bar\Sigma^+$. We write
	\begin{align*}
	D^{k-1}(u-T_ju)=\sum_i D^{k-1}\left[(\rho_j\varphi_i^j)(u-\pi_i^ju)\right].
	\end{align*}
	By local finiteness of the cover $\{Q_i^j\}_i$ and $|D^l(\rho_j\varphi^i_j)|\leq c2^{jl}$, we have that
	\begin{align*}
          \begin{split}
	\|D^{k-1}(u-T_ju)\|_{\lebe^\infty(\Sigma^+)}&\leq c\max_i \left\|D^{k-1}\left[(\rho_j\varphi_i^j)(u-\pi_i^ju)\right]\right\|_{\lebe^\infty(Q_i^j)}\\
	&\leq c\max_i\sum_{l=0}^{k-1}2^{j(k-l-1)}\|D^l(u-\pi_i^ju)\|_{\lebe^\infty(Q_i^j)}\\
        &\leq c\max_i\sum_{l=0}^{k-1}2^{j(k-l-1)}\left(\|D^l(u-\pi_{Q_i^j}u)\|_{\lebe^\infty(Q_i^j)}\right.\\[-1em]
        &\left.\hspace{12em} +\|D^l(\pi_{Q_i^j}u-\pi_{S_i^j})\|_{\lebe^\infty(Q_i^j)}\right)\\
	&\leq c2^{-j}\max_i\|\A[D]u\|_{\lebe^\infty(Q_i^j)}\leq c2^{-j}\|\A[D]u\|_{\lebe^\infty(\Sigma^+)}\rightarrow0,
          \end{split}
	\end{align*}
	which completes the proof of Step 2. In the fourth inequality we used \cite[Prop.~4.2]{GR} and \cite[Lem.~4.4]{GR}.
	
	\emph{Step 3}. It remains to prove the trace inequality. In view of Step 2, we have that
	\begin{align*}
	D^{k-1}u|_\Sigma=\text{$\lebe^1(\Sigma)$-}\!\!\lim_{s\rightarrow\infty} \sum_{j=2}^s D^{k-1}(T_{j+1}u-T_j u), 
	\end{align*}
	so that, by the inequality of Step 1, we have
	\begin{align*}
	\|D^{k-1}u\|_{\lebe^1(\Sigma)}&\leq \sum_{j\geq 2}\|D^{k-1}(T_{j+1}u-T_j u)\|_{\lebe^1(\Sigma)}\\ 
	& \leq\sum_{j\geq2}\|\A[D]u\|_{\lebe^1(\mathcal{S}^j)}
	=c\|\A[D]u\|_{\lebe^1(\Sigma^+)},
	\end{align*}
	which completes the proof of sufficiency of $\C$-ellipticity for the trace inequality.
	
	To complete the proof of the theorem, it remains to prove necessity of $\C$-ellipticity. Necessity of ellipticity follows by a repetition of the arguments in the proof of necessity of ellipticity for Proposition~\ref{prop:int_tr_implies_En-2canc}. Assume now  that $\A[D]$ is not $\mathbb{C}$-elliptic, so that there exist non-zero $\eta,\nu\in\R^n$ and $v\in V+\imag V$ such that $\A[\eta+\imag\nu]v=0$. There is no loss of generality in choosing $\Sigma^+=\{x\in\R^n\colon x\cdot\nu>0\}$. We choose a holomorphic branch of $\log\colon\mathbb{C}\setminus\imag(-\infty,0]\rightarrow\mathbb{C}$ and let $f_\varepsilon$ to be such that $f_\varepsilon^{(k-1)}(z)=(z+\varepsilon\imag)^{-1}$, so $f$ is holomorphic in $\mathbb{C}\setminus\imag(-\infty,-\varepsilon]$. Such a map exists by standard results of complex analysis. Define
	\begin{align*}
	u_\varepsilon(x)=f_\varepsilon(x\cdot\eta+\imag x\cdot\nu)v,
	\end{align*}
	so that $u_\varepsilon$ is smooth in the region where $f$ is defined. Though $u_\varepsilon$ is complex-valued, which we do not per se allow for, the following argument also holds for whichever of $\Re u_\varepsilon$ and $\Im u_\varepsilon$ satisfies the final estimate (one of them must).

	We note that $\A[D]u(x)=0$ for $x\cdot\nu>-\varepsilon$ by the proof of \cite[Prop.~3.1]{GR}. Let $\rho_\varepsilon\in\hold^\infty_c(\{x\cdot\nu>-\varepsilon\})$ be such that $\rho_\varepsilon=1$ in $B_1(0)\cap\Sigma^+$, $\rho_\varepsilon=0$ outside $B_2(0)\cap\Sigma^+$, and $|D^{l}\rho|\leq c$ for $l=0,\ldots k$. One the easily checks that $\rho_\varepsilon u_\varepsilon\in\hold^\infty_c(\R^n,V)$ and $\|\A[D](\rho_\varepsilon u_\varepsilon)\|_{\lebe^1(\Sigma^+)}\leq c$. Again by the proof of \cite[Prop.~3.1]{GR}, we have that
	\begin{align*}
	D^{k-1}u_\varepsilon(x)=f_\varepsilon^{(k-1)}(x\cdot\xi)v\otimes\xi^{\otimes(k-1)}\qquad\text{ for }x\in\Sigma^+,
	\end{align*}
	where $\xi=\eta+\imag\nu$. To check the failure of the estimate, one explicitly computes the limit $\lim_{\varepsilon\downarrow0}\|D^{k-1}(\rho_\varepsilon u_\varepsilon)\|_{\lebe^1({\Sigma\cap B_1(0)})}=\infty$.
\end{proof}

\subsection{Proof of Theorem~\ref{thm:main2}}\label{sec:pf_s=1} 
We can now collect the results proved so far in this Section to obtain our main result in the case of critical codimension $s=1$:
\begin{proof}[Proof of Theorem~\ref{thm:main2}]
We have that \ref{it:int_k=1_est} implies \ref{it:int_k=1_En-2C} from Proposition~\ref{prop:int_tr_implies_En-2canc}. The fact that \ref{it:int_k=1_En-2C} implies \ref{it:int_k=1_Cell} follows from Lemma~\ref{lem:smahoz}. By Theorem~\ref{thm:ext}, we have that \ref{it:int_k=1_Cell} is equivalent to \ref{it:int_k=1_est_strong}.
Finally, it is trivial to see that \ref{it:int_k=1_est_strong} implies \ref{it:int_k=1_est}. The proof is complete.
\end{proof}
To sum up what we covered in this section, we proved that for a trace inequality on hyperplanes the strong cancellation condition is necessary and $\C$-ellipticity is sufficient. In particular, for first order operators, the two notions are equivalent due to the linearity of the symbol map $\A[\cdot]$, hence the problem is solved. However, in general, $\C$-ellipticity is strictly stronger than ellipticity and strong cancellation, cf. Example~\ref{ex:ESCnotCell}. This motivates the following:
\begin{conjecture}\label{openproblem}
	Let $\A[D]$ be elliptic and strongly cancelling, of order $k$. Is it the case that for $(n-1)$-dimensional hyperplanes $\Sigma\leq \R^n$ and $u\in\hold^\infty_c(\R^n,V)$ we have
	\begin{align*}
	\|D^{k-1}u\|_{\lebe^1(\Sigma;\dif\mathscr{H}^{n-1})} \leq c \|\A[D]u\|_{\lebe^1(\R^n;\dif\mathscr{L}^{n})}\text{?} 
	\end{align*}
\end{conjecture}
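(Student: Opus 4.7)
My starting point is that the dyadic projection argument behind Theorem~\ref{thm:ext} fails precisely because the Poincar\'e inequality \cite[Prop.~4.2]{GR} and the chain-control lemma \cite[Lem.~4.4]{GR} are built around projections onto $\ker\A[D]$, which is a finite-dimensional space of polynomials exactly when $\A[D]$ is $\mathbb{C}$-elliptic. Under strong cancellation alone, this kernel can contain plane waves (as in Example~\ref{ex:ESCnotCell}), and the inverse estimates that drive the proof are unavailable. One must therefore exploit the two-sidedness of the interior trace in a way that is genuinely blind to the size of $\ker\A[D]$.

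My first plan would be Fourier-analytic. Ellipticity yields a tensor-valued kernel $K$, smooth off the origin and $(1-n)$-homogeneous, such that $D^{k-1}u = K \ast \A[D]u$ for $u\in\hold_c^\infty(\R^n,V)$. A polar decomposition on $\Sigma=\nu^\perp$ shows that the only obstruction to $K|_\Sigma$ defining a bounded convolution operator $\lebe^1(\R^n)\to\lebe^1(\Sigma)$ is a logarithmic divergence governed by the angular averages of $K$ over the great $(n-2)$-sphere $S^{n-1}\cap\Sigma$. Building on the algebraic mechanism behind the proofs of Proposition~\ref{prop:int_tr_implies_En-2canc} and Lemma~\ref{lem:aux_nec_nu}, I would seek to show that vanishing of these equatorial averages is equivalent to $\bigcap_{\xi\in H\zeroset}\image\A[\xi]=\{0\}$ for every $2$-plane $H$ containing $\nu$, i.e., to strong cancellation in the directions transverse to $\Sigma$.

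Granted such an identification, the passage from pointwise to $\lebe^1$-to-$\lebe^1(\Sigma)$ cancellation would proceed by an atomic decomposition of $\A[D]u$. Here is where the interior character of the trace enters: each atom and its reflection across $\Sigma$ can be handled jointly, and the pair engages with cancellation over a full $2$-plane through $\nu$ rather than with cancellation over a single ray $\R\xi$ (the latter being what a one-sided atom can exploit, and what corresponds to the classical cancellation \eqref{eq:cancelling} -- hence the need for the stronger $\mathbb{C}$-ellipticity in Theorem~\ref{thm:ext}). The mirror-symmetrisation step is what replaces the finite-dimensional projection $\pi_i^j$ in the proof of Theorem~\ref{thm:ext}.

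The main obstacle, in my view, lies precisely in upgrading the pointwise spherical cancellation of $K$ to the actual strong mapping property on $\lebe^1$. The closest available tool is \textsc{Van Schaftingen}'s $\lebe^1$-cancellation lemma \cite[Prop.~6.1]{VS}, but it is formulated for full-sphere averages and for estimates on the ambient space, not on hyperplanes. Adapting it to equatorial averages, or establishing a new $\lebe^1$-trace cancellation lemma under strong cancellation alone, is where I expect most of the difficulty to lie; as a more accessible intermediate target one could try operators whose plane-wave nullspace has a prescribed directional structure, thereby reducing by perturbation to the $\mathbb{C}$-elliptic regime already covered by Theorem~\ref{thm:ext}.
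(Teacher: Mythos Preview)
This statement is recorded in the paper as an \emph{Open Problem}, not as a theorem; the paper offers no proof and explicitly leaves the question unresolved for $k\geq 2$. There is therefore no ``paper's own proof'' to compare against, and what you have submitted is not a proof either but a research programme with self-acknowledged gaps. Your diagnosis of why the argument for Theorem~\ref{thm:ext} does not extend is accurate: the dyadic projection scheme rests on $\dim\ker\A[D]<\infty$, which characterises $\mathbb{C}$-ellipticity, and collapses once plane waves enter the kernel.

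On the strategy itself, two points deserve scrutiny. First, the assertion that ``the only obstruction to $K|_\Sigma$ defining a bounded convolution operator $\lebe^1(\R^n)\to\lebe^1(\Sigma)$ is a logarithmic divergence governed by equatorial averages'' is too optimistic as stated. Even after the relevant spherical moments vanish, one is left with a genuinely singular operator, and $\lebe^1\to\lebe^1$ boundedness of such operators fails in general; the cancellation condition on the \emph{data} (not merely on the kernel) is what makes \eqref{eq:VS} and Proposition~\ref{prop:TriebelLizorkin} work, and it is unclear how to transport that mechanism to the hyperplane-restriction setting. Second, the atomic decomposition step is underspecified: $\A[D]u$ ranges over a constrained subspace of $\lebe^1$, not over a Hardy-type space with a canonical atomic structure, and the ``mirror-symmetrisation'' you invoke to couple an atom with its reflection across $\Sigma$ does not obviously respect the constraint $f\in\A[D](\hold_c^\infty)$; a reflected atom need not lie in the range of $\A[D]$, so the pair may no longer engage the cancellation hypothesis at all.

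You have correctly located the crux --- an $\lebe^1$-to-$\lebe^1(\Sigma)$ cancellation lemma under \eqref{eq:n-2_canc_intro} --- but the proposal does not yet contain a mechanism for it. As it stands this is a reasonable sketch of where to look, not a proof; the problem remains open.
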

\section{Function Space Implications}\label{sec:selected}
In this final section we aim at translating the foregoing results into the language of trace operators on function spaces, and particularly establish Theorem~\ref{thm:strict}. As specific novelties, implications for well-established function spaces such as $\bv$ and $\bd$ shall be given in Section~\ref{sec:examplesBVandBD}.
\subsection{Function spaces}\label{sec:BVAspaces}
To set up the required framework, let $\A[D]$ be of the form \eqref{eq:form} with $k=1$. Following \cite{BDG,GR}, we introduce for open sets $\Omega\subset\R^{n}$ the function spaces
\begin{align*}
\begin{split}
\sobo^{\A,p}(\Omega) & :=\big\{ u\in\lebe^{p}(\Omega,V)\colon\;\A[D]u\in\lebe^{p}(\Omega,W)\big\}, \\
\bv^{\A}(\Omega) & :=\big\{ u\in\lebe^{1}(\Omega,V)\colon\;\A[D]u\in\mathscr{M}(\Omega,W)\big\}
\end{split}
\end{align*}
where $1\leq p <\infty$. The norm on $\sobo^{\A,p}(\Omega)$ is canonically given by $\|u\|_{\sobo^{\A,p}(\Omega)}:=(\|u\|_{\lebe^{p}(\Omega,V)}^{p}+\|\A[D]u\|_{\lebe^{p}(\Omega,W)}^{p})^{1/p}$, whereas $\|u\|_{\bv^{\A}(\Omega)}:=\|u\|_{\lebe^{1}(\Omega,V)}+|\A[D]u|(\Omega)$ is the norm on $\bv^{\A}(\Omega)$ with $|\A[D]u|(\Omega)$ denoting the total variation of the $W$-valued measure $\A[D]u$.

As in the $\bv$-case, the norm topology on $\bv^{\A}$  is much too strong for many applications; so, for instance, elements in $\bv^{\A}(\Omega)$ cannot be approximated by maps in $\hold^{\infty}(\Omega,V)\cap\bv^{\A}(\Omega)$ in the norm topology. Following \cite{AFP} in the $\bv$-case, we say that a sequence $(u_{j})\subset\bv^{\A}(\Omega)$ converges to $u\in\bv^{\A}(\Omega)$ in the \emph{weak*-sense} provided $u_{j}\to u$ in $\lebe^{1}(\Omega,V)$ and $\A[D]u_{j}\stackrel{*}{\rightharpoonup} \A[D]u$ in $\mathscr{M}(\Omega,W)$. We moreover say that $u_{j}\to u$ \emph{strictly} (or $\A$\emph{-strictly}) in $\bv^{\A}(\Omega)$ provided that, in addition, $|\A[D]u_j|(\Omega)\rightarrow|\A[D]u|(\Omega)$ as $j\to\infty$. Let us note that if $u_{j}\to u$ $\A$-strictly, then $\A u_{j} \to \A u$ strictly in the sense of finite, $W$-valued Radon measures on $\Omega$. By routine means (also see \cite[Thm.~2.8]{BDG}), we obtain that for all $u\in\bv^{\A}(\R^{n})$ there holds 
\begin{align}\label{eq:strictapproximation}
\rho_{\varepsilon}*u \to u\qquad\text{$\A$-strictly in $\bv^{\A}(\R^{n})$}\;\;\text{as}\;\varepsilon\searrow 0, 
\end{align}
where  $\rho\in\hold_{c}^{\infty}(\ball(0,1),[0,1])$ is a radially symmetric mollifier with $\|\rho\|_{\lebe^{1}(\R^{n})}=1$ and $\rho_{\varepsilon}(x):=\varepsilon^{-n}\rho(\tfrac{x}{\varepsilon})$ its $\varepsilon$-rescaled variant. 

\subsection{Trace embeddings for maps of bounded $\A$-variation}
Let $n\geq 2$. Our first concern in this section is to establish that, for elliptic and cancelling operators $\A[D]$ of the form \eqref{eq:form} and measures $\mu\in\lebe^{1,n-s}(\R^{n})$ with $0\leq s<1$, $\mu$-traces can be assigned to $D^{k-1}u$ for all $u\in\bv^{\A}(\R^{n})$. For this purpose, it suffices to suppose that $\A[D]$ is a first order operator and to consequently establish the relevant assertions on $\mu$-traces for $u$, and we shall do so in the sequel. The situation for $\sobo^{\A,p}(\R^{n})$ is considerably easier, and we record it here for completeness: 
\begin{lemma}\label{lem:WAptracedefine}
Let $\A[D]$ be a first order elliptic operator of the form \eqref{eq:form}, and let $1<p<n$, $0\leq s < p$. Then there exists a norm-continuous linear trace operator $\trace_{\mu}\colon\sobo^{\A,p}(\R^{n})\to \lebe^{q}(\R^{n},V;\dif\mu)$, where $q=p\frac{n-s}{n-p}$. In particular, $\trace_{\mu}(\varphi)=\varphi|_{\spt(\mu)}$ for all $\varphi\in\sobo^{\A,p}(\R^{n})\cap\hold(\R^{n},V)$. 
\end{lemma}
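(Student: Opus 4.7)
The plan is to reduce to the classical Sobolev–Adams trace inequality \eqref{eq:sobolev} via a Fourier multiplier argument, exploiting the fact that ellipticity alone suffices when $p>1$.

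First I would observe that, since $\A[D]$ is first order and elliptic, the symbol map $\A[\xi]\colon V\to W$ admits the smooth pseudoinverse $\A^\dagger[\xi]:=(\A^*[\xi]\A[\xi])^{-1}\A^*[\xi]$ on $\R^n\setminus\{0\}$, which is homogeneous of degree $-1$. Consequently, the matrix-valued symbol
\[
m(\xi) := \xi \otimes \A^\dagger[\xi], \qquad \xi\in\R^n\setminus\{0\},
\]
is smooth and $0$-homogeneous, and hence satisfies the H\"ormander--Mihlin condition. Since $\widehat{Du}(\xi) = m(\xi)\, \widehat{\A[D]u}(\xi)$ for all $u\in\hold^\infty_c(\R^n,V)$, the H\"ormander--Mihlin multiplier theorem yields
\[
\|Du\|_{\lebe^p(\R^n)} \leq c\, \|\A[D]u\|_{\lebe^p(\R^n)} \qquad \text{for}\ u\in\hold^\infty_c(\R^n,V),\ 1<p<\infty.
\]

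Next I would promote this bound from $\hold^\infty_c$ to every $u\in\sobo^{\A,p}(\R^n)$ by a standard mollification argument, using that $\A[D]$ commutes with convolution so that $\rho_\varepsilon*u$ is Cauchy in the relevant norms. This yields a continuous linear embedding $\sobo^{\A,p}(\R^n)\hookrightarrow \sobo^{1,p}(\R^n,V)$. Composing with the classical Sobolev trace operator of \eqref{eq:sobolev}, which provides a norm-continuous embedding $\sobo^{1,p}(\R^n,V)\hookrightarrow \lebe^q(\R^n,V;\dif\mu)$ in the stipulated range of $p,s,q$, produces the desired operator $\trace_\mu$. The pointwise identity $\trace_\mu(\varphi)=\varphi|_{\spt(\mu)}$ for continuous $\varphi\in\sobo^{\A,p}(\R^n)\cap\hold(\R^n,V)$ is inherited from the analogous property of the classical Sobolev trace, since $\varphi$ has a canonical pointwise restriction compatible with the approximation procedure used to construct the latter.

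I do not expect any genuine obstacle here; this is the "quick argument'' alluded to in the introduction. The sole conceptual point worth highlighting is that the entire reduction collapses at $p=1$: H\"ormander--Mihlin fails, and, more decisively, Ornstein's non-inequality precludes any bound $\|Du\|_{\lebe^1}\leq c\|\A[D]u\|_{\lebe^1}$ outside trivial cases. This is precisely the reason why Theorems~\ref{thm:int} and~\ref{thm:main1} cannot be obtained by multiplier methods and require the extra cancellation assumption.
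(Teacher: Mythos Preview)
Your proposal is correct and follows essentially the same argument as the paper: both define the zero-homogeneous multiplier $\xi\mapsto \xi\otimes\A^{\dagger}[\xi]$ (the paper writes it componentwise as $\xi^{\alpha}\A^{\dagger}[\xi]$ for $|\alpha|=1$), invoke H\"ormander--Mihlin to obtain $\sobo^{\A,p}(\R^{n})\simeq\sobo^{1,p}(\R^{n},V)$, and then appeal to the classical Sobolev trace embedding. Your closing remark on the failure at $p=1$ via Ornstein is also in line with the paper's emphasis.
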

\begin{proof} 
Let $\A[D]$ be elliptic. From the proof of Lemma~\ref{lem:aux_nec_nu}, we recall the notation $\A^{\dagger}[\xi]:=(\A^{*}[\xi]\A[\xi])^{-1}\A^{*}[\xi]\in\lin(W,V)$ for $\xi\in\R^{n}\setminus\{0\}$. We note that for each $\alpha\in\mathbb{N}_{0}^{n}$ with $|\alpha|=1$, the map $m_{\A,\alpha}\colon \xi\mapsto\xi^{\alpha}\A^{\dagger}[\xi]$ is homogeneous of degree zero and belongs to $\hold^{\infty}(\R^{n}\setminus\{0\},\lin(W,V))$. Hence, by the {H\"{o}rmander-Mihlin} multiplier theorem, the corresponding Fourier multiplication operator $\varphi\mapsto\mathscr{F}^{-1}(m_{\A}(\xi)(\mathscr{F}\varphi)(\xi))$, originally defined for $\varphi\in\hold_{c}^{\infty}(\R^{n},W)$, extends to a bounded linear operator $M_{\A,\alpha}\colon\lebe^{p}(\R^{n},W)\to\lebe^{p}(\R^{n},V)$. Since $M_{\A,\alpha}(\A[D]u)=c\partial^{\alpha}u$ for some $c\in\mathbb{C}$ and all $u\in\hold_{c}^{\infty}(\R^{n},V)$, we conclude by a routine approximation argument that $\sobo^{\A,p}(\R^{n})\simeq\sobo^{1,p}(\R^{n},V)$. From here, the conclusion of the theorem, which is well-known for Sobolev spaces $\sobo^{1,p}(\R^{n})$, is complete.
\end{proof}
As for the critical codimension case $s=p\in(1,n)$, we may utilise the recent result of \textsc{ Korobkov} and \textsc{Kristensen} \cite{KorKri}, which states that $\sobo^{1,(p,1)}(\R^{n},V)\hookrightarrow\lebe^p(\dif\mu)$ for $\mu\in\lebe^{1,n-p}(\R^{n})$. The norm continuity of the embedding $\sobo^{\A,(p,1)}(\R^{n})\hookrightarrow\lebe^p(\dif\mu)$ with the corresponding $\A$-Sobolev-Lorentz space $\sobo^{\A,(p,1)}$ then follows by boundedness of singular integrals on Lorentz spaces (see, e.g., \cite[Thm.~3.14]{EKS}). However, in our study of the limit case $s=p=1$ of \eqref{eq:main_ineq}, the Lorentz refinement is implicit as $\lebe^{(1,1)}\simeq\lebe^1$.

In the $\bv$-case, cf. \textsc{Ziemer} \cite[Thm.~5.13.1]{Ziemer}, the matter of assigning traces to $u$ on lower dimensional subsets is essentially reduced to the coarea formula for $\bv$-functions. The lack of a suitable version of the coarea formula in the $\A[D]$-framework forces us to argue differently:
\begin{proposition}\label{prop:tracedefine}
Let $\A[D]$ be a first order, elliptic and cancelling operator of the form \eqref{eq:form} and let $0\leq s < 1$. Then, for any $\mu\in\lebe^{1,n-s}(\R^{n})$, there exists a bounded linear trace operator 
\begin{align}
\trace_{\mu}\colon\bv^{\A}(\R^{n})\to \lebe^{\frac{n-s}{n-1}}(\R^{n};\dif\mu).
\end{align}
In particular, $\trace_{\mu}(\varphi)=\varphi|_{\spt(\mu)}$ for all $u\in\bv^{\A}(\R^{n})\cap\hold(\R^{n},V)$. 
\end{proposition}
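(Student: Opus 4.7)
The strategy is to combine the trace inequality of Theorem~\ref{thm:int}---available for $\hold_c^\infty$ maps---with a cutoff-mollification procedure that passes it to $u \in \bv^\A(\R^n)$, after first establishing that such $u$ admits a $\mu$-a.e.\ representative via the potential-theoretic material of Section~\ref{sec:prelims}. Throughout we abbreviate $q := (n - s)/(n - 1)$ and use that $\A[D]$ is first order, elliptic and cancelling.

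\emph{Identifying a $\mu$-a.e.\ representative.} Because $s < 1$, we have $s(n-1)/(n-s) < 1$, so we may fix $\sigma \in \bigl(s(n-1)/(n-s),\, 1\bigr)$ and set $p := n/(n-1+\sigma) \in (1, n/(n-1))$. Proposition~\ref{prop:TriebelLizorkin}, applied with $k=1$, yields
\begin{align*}
\|\varphi\|_{\dot{\trli}{_{p,p}^{\sigma}}(\R^n,V)} \leq c\|\A[D]\varphi\|_{\lebe^1(\R^n,W)},\qquad \varphi \in \hold_c^\infty(\R^n,V),
\end{align*}
while the $\A$-Sobolev embedding \eqref{eq:VS} supplies $\|\varphi\|_{\lebe^{n/(n-1)}}\leq c\|\A[D]\varphi\|_{\lebe^1}$. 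Interpolating with $\lebe^1$ places $\varphi$ in $\lebe^p$ with a bound by $\|\varphi\|_{\bv^\A}$, hence in the inhomogeneous space $\trli_{p,p}^\sigma$. Passing from $\hold_c^\infty$ to $u \in \bv^\A(\R^n)$ via the $\A$-strict approximation \eqref{eq:strictapproximation} and lower semicontinuity of the $\trli_{p,p}^\sigma$-norm, we obtain $u \in \trli_{p,p}^\sigma(\R^n,V)$. Since $\sigma p > s$, Lemma~\ref{lem:Triebeltrace} (applied with $t=s$) gives $\mu(S_u) = 0$, so that the precise representative $u^*$ is defined $\mu$-almost everywhere.

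\emph{From smooth compactly supported maps to $\bv^\A$.} Let $\rho_\varepsilon$ be a standard mollifier and $\chi_R \in \hold_c^\infty(\R^n,[0,1])$ with $\chi_R \equiv 1$ on $\ball(0,R)$, $\spt(\chi_R) \subset \ball(0,2R)$ and $\|\D\chi_R\|_{\lebe^\infty} \leq c/R$. For $u \in \bv^\A(\R^n)$ we set $v_{R,\varepsilon} := \chi_R(\rho_\varepsilon * u) \in \hold_c^\infty(\R^n,V)$; the Leibniz rule for $\A[D]$ together with $\|\rho_\varepsilon * \A[D]u\|_{\lebe^1} \leq |\A[D]u|(\R^n)$ and $\|\rho_\varepsilon * u\|_{\lebe^1} \leq \|u\|_{\lebe^1}$ yield
\begin{align*}
\|\A[D]v_{R,\varepsilon}\|_{\lebe^1(\R^n,W)} \leq |\A[D]u|(\R^n) + \tfrac{c}{R}\|u\|_{\lebe^1(\R^n,V)}.
\end{align*}
Theorem~\ref{thm:int} thus produces the same bound for $\|v_{R,\varepsilon}\|_{\lebe^q(\dif\mu)}$ up to the prefactor $c\|\mu\|_{\lebe^{1,n-s}}^{1/q}$. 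Sending $R \to \infty$ and invoking Fatou in $\lebe^q(\dif\mu)$ removes the cutoff; the first step then delivers $(\rho_\varepsilon * u)(x) \to u^*(x)$ for $\mu$-a.e.\ $x$, and a second Fatou pass gives
\begin{align*}
\|u^*\|_{\lebe^q(\dif\mu)} \leq c\|\mu\|_{\lebe^{1,n-s}}^{1/q}|\A[D]u|(\R^n).
\end{align*}
Declaring $\trace_\mu(u) := u^*$ as an element of $\lebe^q(\R^n;\dif\mu)$ gives a linear operator whose norm is controlled by $\|\mu\|_{\lebe^{1,n-s}}^{1/q}$. If in addition $u \in \hold(\R^n,V)$, the mollifications converge pointwise everywhere, so $u^* = u$ on $\spt(\mu)$, settling the compatibility with continuous representatives.

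\emph{Principal difficulty.} The delicate point is the transfer of the Triebel-Lizorkin embedding from $\hold_c^\infty$ to the whole space $\bv^\A(\R^n)$, since $\hold_c^\infty$ is not norm-dense there. The $\A$-strict topology, in which smooth approximations are available by \eqref{eq:strictapproximation}, is precisely the right tool because it propagates control on $|\A[D]u|(\R^n)$ through the lower semicontinuity of the $\trli_{p,p}^\sigma$-norm. Once this is secured, the two successive Fatou arguments in the second step are routine, as is the final packaging into an operator.
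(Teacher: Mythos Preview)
Your proof is correct and follows essentially the same route as the paper's: both arguments (i) embed $\bv^{\A}(\R^{n})$ into a fractional space $\trli_{p,p}^{\sigma}\simeq\sobo^{\sigma,p}$ via Proposition~\ref{prop:TriebelLizorkin} and $\A$-strict approximation, (ii) invoke Lemma~\ref{lem:Triebeltrace} to obtain $\mu(S_{u})=0$, (iii) define $\trace_{\mu}(u):=u^{*}$, and (iv) pass the inequality of Theorem~\ref{thm:int} through mollifications by Fatou. Your explicit cutoff $\chi_{R}$ and the two-stage Fatou argument are slightly more careful than the paper's version (which applies the inequality directly to $\rho_{\varepsilon}*u$, tacitly using that $\hold_{c}^{\infty}$ is $\A$-strictly dense in $\bv^{\A}$); your parameter range $\sigma>s(n-1)/(n-s)$ is the sharp threshold for Lemma~\ref{lem:Triebeltrace}, while the paper simply takes $\sigma>s$, which is sufficient since $s(n-1)/(n-s)<s$ for $s<1$.
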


\begin{proof}
Let $n\geq 2$ be fixed and suppose that $0<s<1$; for $s=0$ the claim is trivial. Given $0<\theta<1$, we have $\bv^{\A}\hookrightarrow\sobo^{\theta,p(\theta)}(\R^{n},V)$ for $p(\theta)=\frac{n}{n-1+\theta}$. In fact, given $u\in\bv^{\A}(\R^{n})$, pick $(u_{j})\subset\hold_{c}^{\infty}(\R^{n},V)$ such that $u_{j}\to u$ $\A$-strictly in $\bv^{\A}(\R^{n})$. For a non-relabeled subsequence, we then obtain $u_{j}\to u$ $\mathscr{L}^{n}$-a.e., and thus by Fatou's lemma , 
\begin{align*}
[u]_{\sobo^{\theta,p(\theta)}(\R^{n},V)}\leq \liminf_{j\to\infty}[u_{j}]_{\sobo^{\theta,p(\theta)}(\R^{n},V)}\leq \liminf_{j\to\infty}\|\A[D]u_{j}\|_{\lebe^{1}(\R^{n},W)}=|\A[D]u|(\R^{n}). 
\end{align*}
At this stage, we choose $s<\theta<1$. Then by Lemma~\ref{lem:Triebeltrace}, for every $\varphi\in\sobo^{\theta,p(\theta)}(\R^{n},V)$ $\mu$-a.e. $x\in\R^{n}$ is a Lebesgue point for $u$. We then define 
\begin{align*}
\trace_{\mu}(u)(x):=u^{*}(x)\qquad\text{for $\mu$-a.e. $x\in\R^{n}$}, 
\end{align*}
where $u^{*}$ denotes the precise representative of $u$ as usual. Now let $u_{\varepsilon}:=\rho_{\varepsilon}*u$. Then $u_{\varepsilon}(x)\to u^{*}(x)$ as $\varepsilon\searrow 0$ for all Lebesgue points $x$ of $u$. In consequence, we obtain by Fatou's lemma 
\begin{align*}
\|\trace_{\mu}u & \|_{\lebe^{\frac{n-s}{n-1}}(\R^{n};\dif\mu)} = \|u^{*}\|_{\lebe^{\frac{n-s}{n-1}}(\R^{n};\dif\mu)}\leq \liminf_{\varepsilon\searrow 0}\|u_{\varepsilon}\|_{\lebe^{\frac{n-s}{n-1}}(\R^{n};\dif\mu)}\\ 
& \leq c\liminf_{\varepsilon\searrow 0}\|\mu\|_{\lebe^{1,n-s}(\R^{n})}^{\frac{n-1}{n-s}}\|\A[D]u_{\varepsilon}\|_{\lebe^{1}(\R^{n},W)} \leq c\|\mu\|_{\lebe^{1,n-s}(\R^{n})}^{\frac{n-1}{n-s}}|\A[D]u|(\R^{n}). 
\end{align*}
The proof is complete. 
\end{proof}
\begin{remark}[Homogeneous spaces]
By an inexpensive modification of Lemma~\ref{lem:WAptracedefine} or Proposition~\ref{prop:tracedefine}, it is possible to set up an analogous trace theory for the corresponding \emph{homogeneous spaces}; here we confine to $p=1$. Namely, letting $\|v\|_{{\dot{\sobo}}{^{\A,1}}(\R^{n})}:=\|\A[D]v\|_{\lebe^{1}(\R^{n},W)}$ for $v\in\hold_{c}^{\infty}(\R^{n},V)$, we define ${\dot{\sobo}}{^{\A,1}}(\R^{n})$ to be the closure of the space $\hold_{c}^{\infty}(\R^{n},V)$ for this norm. In consequence, letting $\mu$ be as in Proposition~\ref{prop:tracedefine}, we obtain the existence of a norm continuous trace operator $\trace_{\mu}\colon{\dot{\sobo}}{^{\A,1}}(\R^{n})\to\lebe^{\frac{n-s}{n-1}}(\R^{n},V;\dif\mu)$ provided $\A[D]$ is elliptic and cancelling. A similar result holds for ${\dot{\bv}}{^{\A}}(\R^{n})$, which we define here as those $u\in\lebe^{\frac{n}{n-1}}(\R^{n},V)$ for which $\A[D]u\in\mathscr{M}(\R^{n},W)$, being normed by $\|u\|_{{\dot{\bv}}{^{\A}}(\R^{n})}:=|\A[D]u|(\R^{n})$.  
\end{remark}
The preceding Proposition~\ref{prop:tracedefine} does not remain valid for codimensions $s=1$, the reason essentially being the failure of Theorem~\ref{thm:main1} for this choice of $s$. Even though we could treat the situation for more general measures\footnote{Namely, by an adaptation of Theorem~\ref{thm:ext} in the spirit of \cite{BDG}, for measures $\mu$ absolutely continuous for restrictions of $\mathscr{H}^{n-1}$ to the boundaries of a wide class of NTA domains.}, we stick to the particular case of halfspaces as the underlying difficulties are already visible here.
\begin{proposition}\label{prop:traces=1}
Let $\Sigma\subset\R^{n}$ be an affine hyperplane and let $\mu\in\lebe^{1,n-1}(\R^{n})$. Then there exists a norm continuous linear trace operator 
\begin{align}
\trace_{\mu{\tiny \mres}\Sigma}\colon\bv^{\A}(\R^{n})\to \lebe^{1}(\R^{n},V;\dif\,(\mu\mres\Sigma)).
\end{align}
In particular, $\trace_{\mu{\tiny \mres}\Sigma}(\varphi)=\varphi|_{\Sigma}$ for all $u\in\bv^{\A}(\R^{n})\cap\hold(\R^{n},V)$. 
\end{proposition}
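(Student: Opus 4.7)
My plan is to reduce the statement to the surface-measure case $\mu=\mathscr{H}^{n-1}\mres\Sigma$ and then adapt the exterior trace mechanism of Theorem~\ref{thm:ext} from smooth data to $\bv^\A$. Throughout I assume, as the $s=1$ regime demands (cf.\ Theorems~\ref{thm:main2} and~\ref{thm:strict}), that $\A[D]$ is $\mathbb{C}$-elliptic.

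Since $\mu\in\lebe^{1,n-1}(\R^n)$, any ball $B(x,r)$ with $x\in\Sigma$ satisfies $\mu(B(x,r))\leq\|\mu\|_{\lebe^{1,n-1}}r^{n-1}$, so a Vitali covering of $A\cap\Sigma$, for any Borel set $A\subset\R^n$, gives
\begin{align*}
(\mu\mres\Sigma)(A)\leq c\,\|\mu\|_{\lebe^{1,n-1}(\R^n)}\,\mathscr{H}^{n-1}(A\cap\Sigma).
\end{align*}
It therefore suffices to produce a bounded linear trace $\trace_\Sigma\colon\bv^\A(\R^n)\to\lebe^1(\Sigma;\dif\mathscr{H}^{n-1})$.

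For $\varphi\in\hold_c^\infty(\R^n,V)$ the bound $\|\varphi\|_{\lebe^1(\Sigma;\dif\mathscr{H}^{n-1})}\leq c\,|\A[D]\varphi|(\R^n)$ is Theorem~\ref{thm:ext} with the trivial choice of half-space. To extend this to $u\in\bv^\A(\R^n)$ I would rerun the construction $T_ju=\rho_j\sum_i\varphi_i^j\pi_i^ju+(1-\rho_j)u$ from the proof of Theorem~\ref{thm:ext} directly on $u$. By Proposition~\ref{prop:smith}, $\ker(\A[D])$ is a finite-dimensional space of polynomials of bounded degree, hence the local projections $\pi_i^ju$ are well-defined whenever $u\in\lebe^1_{\locc}$. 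Both the Poincar\'e inequality \cite[Prop.~4.2]{GR} and the chain-control lemma \cite[Lem.~4.4]{GR} transfer to $\bv^\A$ with the natural measure-valued right-hand side $|\A[D]u|(C)$ in place of $\|\A[D]u\|_{\lebe^1(C)}$, via the $\A$-strict mollification \eqref{eq:strictapproximation}. Replaying Step~1 of the proof of Theorem~\ref{thm:ext} then produces
\begin{align*}
\|T_{j+1}u-T_ju\|_{\lebe^1(\Sigma;\dif\mathscr{H}^{n-1})}\leq c\,|\A[D]u|(\mathcal{S}^j),
\end{align*}
and the bounded overlap of the strips $\mathcal{S}^j$ yields the summable estimate $\sum_j\|T_{j+1}u-T_ju\|_{\lebe^1(\Sigma)}\leq c\,|\A[D]u|(\R^n)$. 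The sequence $(T_ju|_\Sigma)$ is therefore Cauchy in $\lebe^1(\Sigma;\dif\mathscr{H}^{n-1})$, so I set
\begin{align*}
\trace_\Sigma(u):=\lim_{j\to\infty}T_ju|_\Sigma.
\end{align*}
Combined with an initial bound $\|T_{j_0}u\|_{\lebe^1(\Sigma)}\leq c\|u\|_{\bv^\A(\R^n)}$ from the Poincar\'e estimate on the starting cubes, this gives the claimed norm continuity, and the consistency claim $\trace_\Sigma(\varphi)=\varphi|_\Sigma$ for continuous $\varphi$ follows from the uniform convergence $T_j\varphi\to\varphi$ on $\Sigma$ established in Step~2 of the proof of Theorem~\ref{thm:ext}.

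The principal difficulty is upgrading the machinery of Section~\ref{sec:ext} from smooth $u$ to $u\in\bv^\A(\R^n)$ with $\A[D]u$ only a finite vector-valued measure. One has to verify that the Poincar\'e and chain-control estimates admit the measure-valued form, and that the $\A$-strict approximation argument respects the localisation to the dyadic strips $\mathcal{S}^j$. Since $\rho_\varepsilon\ast u\to u$ is $\A$-strict only globally on $\R^n$, the localised passage to the limit requires either a slight perturbation of the dyadic grid so that $|\A[D]u|$ charges no strip boundary, or an appeal to lower semicontinuity of the total variation on open sets to conclude $|\A[D](\rho_\varepsilon\ast u)|(U)\to|\A[D]u|(U)$ on each strip.
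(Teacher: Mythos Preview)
Your approach is correct but differs from the paper's in how the trace is extended from smooth maps to $\bv^\A$. The paper does not rerun the $T_j$-machinery on rough $u$; instead it works one half-space at a time: for $u\in\bv^\A(\R^n)$ it restricts $u$ to $\Sigma^\pm$, takes an $\A$-strictly approximating sequence $(v_j)\subset\hold(\Sigma^\pm,V)\cap\bv^\A(\Sigma^\pm)$, and shows $(v_j|_\Sigma)$ is Cauchy in $\lebe^1(\Sigma)$ by multiplying with a cutoff $\varphi_\varepsilon$ supported in a thin strip near $\Sigma$, applying the smooth exterior inequality of Theorem~\ref{thm:ext} to $\varphi_\varepsilon(v_j-v_l)$, and then letting $\varepsilon\searrow 0$. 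This produces exterior traces $\trace_\Sigma^\pm$, and the interior trace is defined as their average $\tfrac{1}{2}(\trace_\Sigma^+(u|_{\Sigma^+})+\trace_\Sigma^-(u|_{\Sigma^-}))$. Your route is more direct---you apply $T_j$ straight to $u$ and rely on the Poincar\'e and chain-control estimates of \cite{GR} in measure-valued form---while the paper only ever invokes the smooth case of Theorem~\ref{thm:ext} plus $\A$-strict approximation, so it never has to upgrade those local estimates to $\bv^\A$ or confront the localisation issue you flag at the end. Note also that your construction, with the strips $\mathcal{S}^j$ sitting on one side of $\Sigma$, produces the one-sided trace $\trace_\Sigma^+$ rather than the symmetric average; this is equally admissible for the proposition as stated.
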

\begin{proof}
Since $\mu\in\lebe^{1,n-1}(\R^{n})$, $\mu\mres\Sigma\ll\mathscr{H}^{n-1}\mres\Sigma$, and so it suffices to prove the proposition for $\mu=\mathscr{H}^{n-1}\mres\Sigma$. We aim to construct the \emph{interior} trace by \emph{exterior} traces, and for this we firstly consider the spaces $\sobo^{\A,1}(\R^{n})$. For $\Sigma$ as in the proposition, denote $\Sigma^{+}$ and $\Sigma^{-}$ the open halfspaces determined (uniquely up to a change of $\pm$) by $\Sigma$. 

Given $v\in\sobo^{\A,1}(\Sigma^{+})$, we employ \cite[Thm.~4.1]{GR} to boundedly extend $v$ to $\overline{v}\in\sobo^{\A,1}(\R^{n})$, and then choose $(v_{j})\subset\hold(\R^{n},V)\cap\sobo^{\A,1}(\R^{n})$ such that $v_{j}\to \overline{v}$ in the norm topology of $\sobo^{\A,1}(\R^{n})$. By Theorem~\ref{thm:ext}, we have that $\|v_{j}-v_{l}\|_{\lebe^{1}(\Sigma,V;\dif\mathscr{H}^{n-1})}\leq c_{\Sigma^{+}}\|\A[D](v_{j}-v_{l})\|_{\lebe^{1}(\Sigma^{+},W;\dif\mathscr{L}^{n})}$, and analogous for $\Sigma^{-}$. Hence $(v_{j}|_{\Sigma})$ is Cauchy in $\lebe^{1}(\Sigma,V;\dif\mathscr{H}^{n-1})$, and thus converges to some $\trace_{\Sigma}^{+}(v)\in\lebe^{1}(\Sigma,V;\dif\mathscr{H}^{n-1})$. A routine argument yields that $\trace_{\Sigma}^{+}(v)$ is independent of the particular approximation sequence $(v_{j})$, and so the mapping $\sobo^{\A,1}(\Sigma^{+})\ni v\mapsto \trace_{\Sigma}^{+}(v)\in\lebe^{1}(\Sigma,V;\dif\mathscr{H}^{n-1})$ is well-defined. 

For $\bv^{\A}$-maps, the situation is a bit more delicate as the topology of $\A$-strict convergence does not admit the same approximation argument. We first establish that there exists a norm continuous, linear exterior trace operator for $\bv^{\A}(\Sigma^{\pm})$ each. Thus let $v\in\bv^{\A}(\Sigma^{+})$, and choose a sequence $(v_{j})\subset\hold(\Sigma^{+},V)\cap\bv^{\A}(\Sigma^{+})$ such that $v_{j}\to v$ $\A$-strictly in $\bv^{\A}(\Sigma^{+})$. Given $\varepsilon>0$, let $\varphi_{\varepsilon}\in\hold(\R^{n},[0,1])$ be a cut-off function with $\varphi_{\varepsilon}= 1$ in $\overline{U_{\varepsilon}^{+}}$, where $U_{\varepsilon}^{+}:=\{x\in\Sigma^{+}\colon\;0<\dist(x,\Sigma)\leq\varepsilon\}$, $\varphi_{\varepsilon}\equiv 0$ outside $\{x\in\Sigma\colon\;\dist(x,\Sigma)>2\varepsilon\}$ and $|\nabla\varphi_{\varepsilon}|\leq \tfrac{2}{\varepsilon}$. Then there holds for all $j,l\in\mathbb{N}$
\begin{align}\label{eq:traceestimatedefineBVA}
\begin{split}
\|\trace_{\Sigma}^{+}&(v_{j}-v_{l})\|_{\lebe^{1}(\Sigma,V;\dif\mathscr{H}^{n-1})}  = \|\trace_{\Sigma}^{+}(\varphi_{\varepsilon}(v_{j}-v_{l}))\|_{\lebe^{1}(\Sigma^{+},V;\dif\mathscr{H}^{n-1})}\\
& \leq c\|\A[D](\varphi_{\varepsilon}(v_{j}-v_{l}))\|_{\lebe^{1}(\Sigma^{+},W)} \\
& \leq c\|\varphi_{\varepsilon}\A[D](v_{j}-v_{l})\|_{\lebe^{1}(\Sigma^{\pm},W)} + \frac{c}{\varepsilon}\|v_{j}-v_{l}\|_{\lebe^{1}(\Sigma^{\pm},V)}\big)\\
& \leq c\big(|\A[D]v_{j}|(U_{2\varepsilon}^{+})+ |\A[D]v_{l}|(U_{2\varepsilon}^{+}) + \frac{1}{\varepsilon}\|v_{j}-v_{l}\|_{\lebe^{1}(\Sigma^{+},V)}\Big),
\end{split}
\end{align}
where $\trace_{\Sigma^{+}}$ is the exterior trace operator on $\sobo^{\A,1}(\Sigma^{+})$, in turn being well-defined by the first part of the proof. We now send $j,l\to\infty$ to obtain  
\begin{align*}
\lim_{j,l\to\infty}
\|\trace_{\Sigma^{+}}(v_{j}-v_{l})\|_{\lebe^{1}(\Sigma,V;\dif\mathscr{H}^{n-1})}\leq c\int_{\R^{n-1}\times (0,2\varepsilon)}\dif|\A[D]u|, 
\end{align*}
and then send $\varepsilon\searrow 0$ to obtain that $(\trace_{\Sigma^{+}}(v_{j}))$ is Cauchy in $\lebe^{1}(\Sigma,V;\dif\mathscr{H}^{n-1})$. As above, this easily implies the existence of a norm-continuous, linear exterior trace operator $\trace_{\Sigma}^{+}\colon\bv^{\A}(\Sigma^{+})\to\lebe^{1}(\Sigma,V;\dif\mathscr{H}^{n-1})$, and analogously $\trace_{\Sigma}^{-}\colon\bv^{\A}(\Sigma^{-})\to\lebe^{1}(\Sigma,V;\dif\mathscr{H}^{n-1})$. We now define for $u\in\bv^{\A}(\R^{n})$
\begin{align*}
\trace_{\Sigma}(u):=\frac{1}{2}(\trace_{\Sigma}^{+}(u|_{\Sigma^{+}})+\trace_{\Sigma}^{-}(u|_{\Sigma^{-}})), 
\end{align*}
and it is easily seen that this trace operator matches the properties as asserted. The proof is complete. 
\end{proof}
Two remarks are in order. 
\begin{remark}
In the preceding proof, we have argued by different smooth approximations in $\bv^{\A}(\Sigma^{+})$ and $\bv^{\A}(\Sigma^{-})$, respectively. Note that we cannot argue by global smooth approximations in the sense that we may not take $(v_{j})\subset\hold(\R^{n},V)\cap\bv^{\A}(\R^{n})$ such that $v_{j}\to v$ $\A$-strictly in $\bv^{\A}(\R^{n})$ in \eqref{eq:traceestimatedefineBVA}. In this situation, terms of the form $|\A[D]v_{j}|(\overline{U}_{2\varepsilon})$ appear on the very right hand side of \eqref{eq:traceestimatedefineBVA}, and these terms do \emph{not} vanish as $\varepsilon\searrow 0$. Essentially, this is a consequence of the fact that if $v_{j}\to v$ $\A$-strictly in $\bv^{\A}(\R^{n})$, then in general it \emph{does not} follows that the restrictions $v_{j}|_{\Sigma^{+}}$ converge $\A$-strictly to $v|_{\Sigma^{+}}$ in $\bv^{\A}(\Sigma^{+})$. A scenario when the $\A$-strict convergence of the restrictions can be obtained is discussed in step 2 of the proof of Proposition~\ref{prop:continuitySobolevembedding} below. 
\end{remark}
\begin{remark}[Exterior versus interior traces]
As directly extractable from the above proof, there exists a norm continuous, linear exterior trace operator $\trace\colon\bv^{\A}(\Sigma^{+})\to\lebe^{1}(\Sigma,V;\dif\mathscr{H}^{n-1})$. This operator is constructed as the $\A$-strictly continuous extension of the trace operator on $\sobo^{\A,1}(\Sigma^+)$ to $\bv^{\A}(\Sigma^+)$. Note that this \emph{does not} imply strict continuity of the \emph{interior} trace operator from Proposition~\ref{prop:tracedefine}. This is due to the fact that if $u_{j}\to u$ $\A$-strictly in $\bv^{\A}(\R^{n})$, then we \emph{do not} have $u_{j}|_{\Sigma^{\pm}}\to u|_{\Sigma^{\pm}}$ $\A$-strictly in $\bv^{\A}(\Sigma^{\pm})$ each; see Theorem~\ref{thm:inttracesBV} and the discussion afterwards for the failure of codimension one interior trace operators on $\bv^{\A}(\R^{n})$. 
\end{remark}
\subsection{Continuity of trace operators}
Proposition~\ref{prop:tracedefine} and~\ref{prop:traces=1} allow to define trace operators in two different settings which are \emph{continuous for the norm topology} on $\bv^{\A}$ each. As argued above, the norm topology is too weak for various applications, and so we now study the continuity of the trace operators with respect to $\A$-strict convergence. We begin with the following Proposition~\ref{prop:continuitySobolevembedding} in the spirit of \cite[Prop.~3.7]{RS}, where the situation for the $\bv$-case was considered. Then we boost it for codimension $0\leq s<1$-measures by use of the multiplicative trace inequality of Theorem~\ref{thm:main1}, thereby completing the proof of Theorem~\ref{thm:strict}. For its proof, we require the following fact which follows, e.g., from \cite[Thm.~3]{RW} or from the recent paper \cite{ARDPHR}.
\begin{lemma}\label{lem:nonatomic}
Let $\A[D]$ be an elliptic and cancelling operator of the form \eqref{eq:form}. Let $u\in\lebe_{\locc}^{1}(\R^{n},V)$. If $\A[D]u$ is a measure, then it is non-atomic.
\end{lemma}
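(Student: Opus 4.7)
The plan is to argue by contradiction via a blow-up at a putative atom, reducing matters to the characterisation of cancellation through fundamental solutions from \cite[Lem.~2.5]{R} (cf.\ \cite[Prop.~6.1]{VS}): for a cancelling $\A[D]$ of order $k$, no nonzero $w \in W$ admits an $\lebe^1_{\locc}$ distributional solution $U$ of $\A[D]U = w\delta_0$.

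Suppose for contradiction that $\mu := \A[D]u$ carries an atom; by translation we may place it at the origin, so $\mu = w\delta_0 + \nu$ with $w \in W \setminus \{0\}$ and $|\nu|(\{0\}) = 0$. Introduce the dilations $v_\lambda(x) := \lambda^{n-k} u(\lambda x)$; the $k$-homogeneity of $\A[\cdot]$ gives $\A[D]v_\lambda = \lambda^n \mu(\lambda\,\cdot\,)$ in $\mathscr{D}'(\R^n,W)$, and pairing against $\phi \in \hold_c^\infty(\R^n)$ yields
\begin{align*}
\langle \A[D]v_\lambda, \phi\rangle = w\phi(0) + \int \phi(x/\lambda)\,\dif \nu(x) \longrightarrow w\phi(0)
\end{align*}
as $\lambda \searrow 0$, where the second term vanishes in the limit because $|\nu|(B(0,R\lambda)) \to 0$. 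Hence $\A[D]v_\lambda \wstar w\delta_0$ in $\mathscr{M}(\R^n,W)$.

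To complete the argument I would extract a distributional blow-up limit $U$ of $v_{\lambda_m}$ along a subsequence $\lambda_m \searrow 0$. Weak-$(1,1)$ estimates for the Calder\'on--Zygmund operator associated with the elliptic symbol $\A[\xi]$ give a local bound on $u$ in $\lebe^{n/(n-k),\infty}$, and a direct scaling computation shows this weak Lorentz norm to be invariant under the dilations defining $v_\lambda$; hence $\{v_\lambda\}$ is uniformly locally bounded there. Banach--Alaoglu supplies a limit $U$ which is $(k-n)$-homogeneous by the identity $v_\lambda(\mu\,\cdot\,) = \mu^{k-n}v_{\lambda\mu}(\,\cdot\,)$, smooth away from the origin by interior elliptic regularity, and $\lebe^1_{\locc}$ as soon as $k\geq 1$. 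Passing to the limit in the equation above yields $\A[D]U = w\delta_0$; the cancellation characterisation then forces $w = 0$, a contradiction.

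The main obstacle lies in the compactness step and the identification of the blow-up profile: one needs to rule out that $v_\lambda$ acquires, in the limit, a large contribution from the (in general infinite-dimensional) homogeneous kernel of $\A[D]$, and to confirm that the limiting object is genuinely a fundamental solution rather than a perturbation thereof. This rigidity is precisely supplied by the Fourier-analytic Roginskaya--Wojciechowski theorem \cite[Thm.~3]{RW}, which directly exploits the containment $\hat{\mu}(\xi) \in \image\A[\xi]$, and by the tangent-measure/rectifiability framework for constrained measures in \cite{ARDPHR}; invoking either of these realises the above programme rigorously.
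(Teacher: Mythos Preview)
The paper does not give a proof of this lemma: it simply records that the statement follows from \cite[Thm.~3]{RW} or from \cite{ARDPHR}. Your proposal ends in exactly the same place, invoking these two references to close the argument. In that sense your final answer agrees with the paper's.

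What you add on top is a blow-up scaffolding, and it is worth being clear about its status. The rescaling $v_\lambda(x)=\lambda^{n-k}u(\lambda x)$ and the identification $\A[D]v_\lambda\wstar w\delta_0$ are correct and standard, and the overall strategy is precisely that of the tangent-measure approach in \cite{ARDPHR}. However, the step you flag yourself as the ``main obstacle'' is a genuine gap if one tries to make the argument self-contained: the weak-$(1,1)$/weak Lorentz bound gives equiboundedness of $(v_\lambda)$ but not compactness in a topology strong enough to preclude loss of mass to kernel directions, and the identification of the limit as an $\lebe^1_{\locc}$ fundamental solution (rather than a distribution that fails to lie in $\lebe^1_{\locc}$, or one polluted by a $\ker\A[D]$-contribution) is exactly the nontrivial content. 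Your appeal to \cite{RW} or \cite{ARDPHR} to resolve this is not a patch to your argument---either reference already yields the lemma outright, so once you cite them the blow-up setup becomes redundant.

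A minor point on the first paragraph: the direction you need from \cite[Lem.~2.5]{R}/\cite[Prop.~6.1]{VS} is the contrapositive (cancelling $\Rightarrow$ no $\lebe^1_{\locc}$ solution of $\A[D]U=w\delta_0$ with $w\neq 0$). This is indeed true for elliptic $\A[D]$, via the Fourier-side observation that $\widehat{\A[D]U}(\xi)=\A[\xi]\widehat{U}(\xi)\in\image\A[\xi]$ forces any constant right-hand side into $\bigcap_{\xi}\image\A[\xi]$, but it is worth stating that this is the relevant implication and that some care with tempered extensions is needed; \cite[Lem.~2.5]{R} as used elsewhere in the paper is the converse construction.
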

\begin{proposition}\label{prop:continuitySobolevembedding}
Let $\A[D]$ be a first order, elliptic and cancelling differential operator of the form \eqref{eq:form}. Then the embedding 
\begin{align}
\bv^{\A}(\R^{n})\hookrightarrow \lebe^{\frac{n}{n-1}}(\R^{n},V)
\end{align}
is continuous for the $\A$-strict topology: If $u,u_{1},u_{2},...\in\bv^{\A}(\R^{n})$ are such that $u_{j}\to u$ $\A$-strictly in $\bv^{\A}(\R^{n})$, then $u_{j}\to u$ strongly in $\lebe^{\frac{n}{n-1}}(\R^{n},V)$ as $j\to\infty$.
\end{proposition}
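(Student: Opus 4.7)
The plan is to combine the Brezis--Lieb lemma with a concentration-compactness argument that crucially exploits the non-atomicity of $|\A[D]u|$ afforded by Lemma~\ref{lem:nonatomic}. First I would note that, by the Sobolev inequality \eqref{eq:VS} extended from $\hold_c^\infty$ to $\bv^{\A}(\R^{n})$ via the $\A$-strict approximation \eqref{eq:strictapproximation} and Fatou's lemma, one has $\|v\|_{\lebe^q(\R^n,V)}\leq c\,|\A[D]v|(\R^n)$ for every $v\in\bv^{\A}(\R^{n})$, where $q=\frac{n}{n-1}$; in particular $(u_j)$ is bounded in $\lebe^q$. Since $u_j\to u$ in $\lebe^1$, a subsequence converges $\mathscr{L}^n$-a.e., which combined with the uniform $\lebe^q$-bound gives $u_j\rightharpoonup u$ weakly in $\lebe^q$ with $\|u\|_{\lebe^q}\leq\liminf_j\|u_j\|_{\lebe^q}$. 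By Lemma~\ref{lem:BrezisLieb}, the target conclusion $\|u_j-u\|_{\lebe^q}\to 0$ then reduces to showing $\limsup_j\|u_j\|_{\lebe^q}\leq\|u\|_{\lebe^q}$.

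To obtain this, I would first upgrade the weak-$*$ convergence $\A[D]u_j\wstar\A[D]u$ in $\mathscr{M}(\R^n,W)$ to $|\A[D]u_j|\wstar|\A[D]u|$ in $\mathscr{M}(\R^n)$: any weak-$*$ accumulation point $\sigma$ of $|\A[D]u_j|$ dominates $|\A[D]u|$ by lower semicontinuity of the total variation, while $\A$-strictness $|\A[D]u_j|(\R^n)\to|\A[D]u|(\R^n)$ forces $\sigma(\R^n)\leq|\A[D]u|(\R^n)$; together these yield $\sigma=|\A[D]u|$. By Lemma~\ref{lem:nonatomic} the limit is non-atomic, so the $|\A[D]u_j|$ cannot concentrate mass at any single point.

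Feeding this into a Lions-type concentration-compactness principle, I would represent any weak-$*$ accumulation point of $|u_j|^q\mathscr{L}^n$ as $|u|^q\mathscr{L}^n+\sum_k \nu_k\delta_{x_k}$. A localised version of \eqref{eq:VS}, obtained by applying the inequality to $\varphi u_j$ with smooth cutoffs $\varphi$ concentrated near $x_k$ and controlling the commutator $[\A[D],\varphi]u_j$ via $\|\nabla\varphi\|_{\lebe^\infty}\|u_j\|_{\lebe^1}$, yields the standard relation $\nu_k^{(n-1)/n}\leq c\,\mu_k$, where $\mu_k$ is the atomic mass at $x_k$ of the weak-$*$ limit of $|\A[D]u_j|$. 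The previous paragraph forces $\mu_k=0$, whence $\nu_k=0$ for every $k$. To convert this local no-concentration into the global statement $\int|u_j|^q\dif\mathscr{L}^n\to\int|u|^q\dif\mathscr{L}^n$, I would verify tightness via the cutoff estimate $\|\varphi_R u_j\|_{\lebe^q}\leq c\,|\A[D]u_j|(\R^n\setminus\ball(0,R))+cR^{-1}\|u_j\|_{\lebe^1}$, whose right-hand side vanishes uniformly in $j$ as $R\to\infty$, thanks to $|\A[D]u|(\R^n\setminus\ball(0,R))\to 0$ and the weak-$*$ convergence of $|\A[D]u_j|$ on continuity sets of $|\A[D]u|$.

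The main obstacle will be the concentration-compactness step itself: extracting a local Sobolev estimate on small balls with a sufficiently tame commutator contribution, and selecting the radii inside $|\A[D]u|$-continuity sets so that a Portmanteau-type passage to the limit is justified. Once this is secured, combining $\|u_j\|_{\lebe^q}\to\|u\|_{\lebe^q}$ with Lemma~\ref{lem:BrezisLieb} delivers $\|u_j-u\|_{\lebe^q}\to 0$ and completes the argument.
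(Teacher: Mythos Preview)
Your proposal is essentially the same approach as the paper's: both combine the Brezis--Lieb lemma, a Lions-type concentration-compactness argument driven by the localised Sobolev inequality \eqref{eq:VS}, the non-atomicity of $|\A[D]u|$ from Lemma~\ref{lem:nonatomic}, and a tightness step at infinity. The paper organises Step~1 slightly differently, working directly with $v_j^R=\mathbbm{1}_{\ball(0,R)}(u_j-u)$ and showing its $\lebe^{q}$-defect measure is purely atomic and then zero, whereas you reduce to $\|u_j\|_{\lebe^q}\to\|u\|_{\lebe^q}$ and invoke the Lions decomposition $\nu=|u|^q\mathscr{L}^n+\sum_k\nu_k\delta_{x_k}$; these are two packagings of the same mechanism.

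One small caveat: your commutator bound ``$\|\nabla\varphi\|_{\lebe^\infty}\|u_j\|_{\lebe^1}$'' must be read with the \emph{local} $\lebe^1$-norm on $\spt(\nabla\varphi)$, otherwise it does not vanish under the localisation $\varphi=\varphi_\varepsilon$. The paper handles this more cleanly via H\"older, estimating the commutator by $\|\nabla\varphi_\varepsilon\|_{\lebe^n}\|u\|_{\lebe^{n/(n-1)}(\ball(x_k,\varepsilon))}$, where the first factor is scale-invariant and the second is $o(1)$; your $\lebe^\infty$--$\lebe^1$ version reduces to this after one application of H\"older. Also, the Lions representation you cite is not free in this setting and is precisely what the reverse H\"older relation $\mu_1(A)\leq c\,\mu_2(A)^{n/(n-1)}$ (the paper's route) establishes, so in a full write-up you would need to supply that step rather than invoke it.
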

\begin{proof}
The proof evolves in two steps. First, we establish that $\bv^{\A}(\ball(0,R))\hookrightarrow\lebe^{\frac{n}{n-1}}(\ball(0,R),V)$ on open and bounded Lipschitz domains, and secondly pass to the entire space $\R^{n}$ by an approximation argument. For the following, let $u,u_{1},u_{2},...\in\bv^{\A}(\R^{n})$ such that $u_{j}\to u$ $\A$-strictly in $\bv^{\A}(\R^{n})$ as $j\to\infty$. 

\emph{Step 1. An intermediate claim for bounded domains.} Our first aim is to show that  
\begin{align}\label{eq:intermediatestrictclaim}
v_{j}^{R}:=\mathbbm{1}_{\ball(0,R)}(u_{j}-u)\to 0\;\;\;\text{strongly in}\;\lebe^{\frac{n}{n-1}}(\R^{n},V)\;\;\;\text{for all}\;R>0. 
\end{align}
Fix $R>0$. In view the claim, we must establish that (i) $v_{j}^{R}\to 0$ in measure (with respect to  $\mathscr{L}^{n}$) and  (ii) $(v_{j}^{R})$ is $\frac{n}{n-1}$-uniformly integrable. By $\A$-strict convergence, we are in position to use $u_{j}\to u$ in measure (with respect to $\mathscr{L}^{n}$) as a consequence of $u_{j}\to u$ in $\lebe^{1}(\R^{n},V)$. Thus assume toward a contradiction that $(\mathbbm{1}_{\ball(0,R)}u_{j})$ is not $\frac{n}{n-1}$-uniformly integrable. Since $\A[D]$ is elliptic and cancelling, we have 
\begin{align*}
\sup_{j\in\mathbb{N}}\|v_{j}^{R}\|_{\lebe^{\frac{n}{n-1}}(\ball(0,R),V)}\leq \sup_{j\in\mathbb{N}}\|u_{j}-u\|_{\lebe^{1}(\R^{n},V)}+|\A[D]u|(\R^{n})<\infty 
\end{align*}
and similarly $\sup_{j\in\mathbb{N}}|\A v_{j}^{R}|(\ball(0,R))<\infty$. Since $(\mathbbm{1}_{\ball(0,R)}u_{j})$ is assumed to be not $\frac{n}{n-1}$-uniformly integrable, $(v_{j}^{R})$ is not $\frac{n}{n-1}$-uniformly integrable either. By the Banach-Alaoglu-Bourbaki theorem, we thus deduce that there exist two non-negative finite Radon measures $\mu_{1},\mu_{2}\in\mathscr{M}(\overline{\ball(0,R)})$ such that, for a non-relabeled subsequence
\begin{align}\label{eq:measureconvergences1}
|v_{j}^{R}|^{\frac{n}{n-1}}\mathscr{L}^{n}\mres\ball(0,R)\stackrel{*}{\rightharpoonup} \mu_{1}\;\;\;\text{and}\;\;\;|\A[D]v_{j}^{R}|\stackrel{*}{\rightharpoonup} \mu_{2}
\end{align}
as $j\to\infty$; again, we can assume that $\gamma$ is a strictly positive measure as we suppose that $v_{j}^{R}\not\to \mathbbm{1}_{\ball(0,R)}u$ in $\lebe^{\frac{n}{n-1}}(\ball(0,R),V)$ for the time being. Now let $\varphi\in\hold^{1}(\overline{\ball(0,R)})$. Then, employing the Sobolev inequality $\|\psi\|_{\lebe^{\frac{n}{n-1}}(\R^{n},V)}\leq C|\A[D]\psi|(\R^{n})$, we find 
\begin{align}\label{eq:measureconvergences2}
\begin{split}
\int_{\R^{n}}|\varphi u_{j}^{R}|^{\frac{n}{n-1}}\dif x \leq c\Big(\int_{\R^{n}}|\varphi \A[D] u_{j}^{R}| + |\nabla\varphi\otimes_{\A}u_{j}^{R}|(\mathscr{L}^{n}\mres\ball(0,R)) \Big)^{\frac{n}{n-1}},\\
\int_{\R^{n}}|\varphi v_{j}^{R}|^{\frac{n}{n-1}}\dif x \leq c\Big(\int_{\R^{n}}|\varphi \A[D] v_{j}^{R}| + |\nabla\varphi\otimes_{\A}v_{j}^{R}|(\mathscr{L}^{n}\mres\ball(0,R)) \Big)^{\frac{n}{n-1}}.
\end{split}
\end{align}
We again pass to a suitable non-relabeled subsequence, thereby achieving 
\begin{itemize}
\item[(i)] $\varphi u_{j}^{R}\rightharpoonup \varphi u^{R}$ in $\lebe^{\frac{n}{n-1}}(\ball(0,R),V)$. 
\item[(ii)] $\varphi u_{j}^{R}\to \varphi u^{R}$ $\mathscr{L}^{n}$-a.e. in $\ball(0,R)$. 
\end{itemize}
By the Riesz-Fischer theorem, (ii) is easily achievable by exploiting $\varphi u_{j}^{R}\to \varphi u^{R}$ in $\lebe^{1}(\ball(0,R),V)$. Since $(\varphi u_{j}^{R})$ is equibounded in $\lebe^{\frac{n}{n-1}}(\ball(0,R),V)$, we thus deduce by $\varphi u_{j}^{R}\to \varphi u^{R}$ in $\lebe^{1}(\ball(0,R),V)$ that $\varphi u_{j}^{R}\to \varphi u^{R}$ in $\mathscr{D}'(\ball(0,R),V)$ and thus in total\footnote{Here we use that equiboundedness in $\lebe^{p}(\Omega,V)$, $1<p<\infty$ and convergence in $\mathscr{D}'(\Omega,V)$ imply weak convergence in $\lebe^{p}(\Omega)$ provided the underlying set $\Omega$ is bounded.} arrive at (i). We are now in position to use the Brezis-Lieb Lemma ~\ref{lem:BrezisLieb} on the left hand side of \eqref{eq:measureconvergences2}. This yields
\begin{align*}
\int_{\R^{n}}|\varphi u^{R}|^{\frac{n}{n-1}}\dif x & + \int_{\R^{n}}|\varphi|^{\frac{n}{n-1}}\dif\mu_{1}  = \int_{\R^{n}}|\varphi u^{R}|^{\frac{n}{n-1}}\dif x + \lim_{j\to\infty}\int_{\R^{n}}|\varphi|^{\frac{n}{n-1}}| v_{j}^{R}|^{\frac{n}{n-1}}\dif\mathscr{L}^{n} \\ &  = \lim_{j\to\infty} \int_{\R^{n}}|\varphi u_{j}^{R}|^{\frac{n}{n-1}}\dif x \\ 
& \leq c\lim_{j\to\infty}\Big(\int_{\R^{n}}|\varphi \A[D] u_{j}^{R}| + |\nabla\varphi\otimes_{\A}u_{j}^{R}|(\mathscr{L}^{n}\mres\ball(0,R)) \Big)^{\frac{n}{n-1}} \\ 
& \leq c \Big(\int_{\R^{n}}|\varphi|\dif |\A[D]u^{R}|+\int_{\R^{n}}|\nabla\varphi\otimes_{\A}u^{R}|(\mathscr{L}^{n}\mres\ball(0,R)) \Big)^{\frac{n}{n-1}}, 
\end{align*}
whereas $\eqref{eq:measureconvergences2}_{2}$ yields 
\begin{align*}
\int_{\R^{n}}|\varphi|^{\frac{n}{n-1}}\dif\mu_{1} \leq c \Big(\int_{\R^{n}}|\varphi|\dif\mu_{2} \Big)^{\frac{n}{n-1}}.
\end{align*}
By routine approximation, this entails that $\mu_{1}(A)\leq c \mu_{2}(A)^{\frac{n}{n-1}}$ for all $A\in\mathscr{B}(\ball(0,R))$ and thus $\mu_{1}\ll\mu_{2}$. Hence the density $\frac{\dif\mu_{1}}{\dif\mu_{2}}$ is well-defined $\mu_{2}$-a.e., and by $\mu_{1}(A)\leq c \mu_{2}(A)^{\frac{n}{n-1}}$ for all $A\in\mathscr{B}(\ball(0,R))$, we moreover deduce that $\frac{\dif\mu_{1}}{\dif\mu_{2}}(x)=0$ for all $x\in\ball(0,R)$ which are no atoms for $\mu_{2}$. On the other hand, $\mu_{1}$ is finite and absolutely continuous with respect to $\mu_{2}$; thus we find $(a_{l})\in\ell^{1}(\mathbb{N},\R_{\geq 0})$ and distinct points $x_{l}\in\overline{\ball(0,R)}$ with $\mu_{1}=\sum_{l=1}^{\infty}a_{l}\delta_{x_{l}}$. By assumption, $\mu_{1}$ is strictly positive and hence we find $l^{*}\in\mathbb{N}$ with $a_{l^{*}}>0$. We now localise around the point $x_{l^{*}}$ and choose $\varphi\in\hold_{c}^{1}(\ball(0,1),[0,1])$ with $\mathbbm{1}_{\ball(0,\frac{1}{2})}\leq \varphi \leq \mathbbm{1}_{\ball(0,\frac{3}{4})}$ and put, for $\varepsilon>0$, $\varphi_{\varepsilon}(x):=\varphi(\frac{x-x_{l^{*}}}{\varepsilon})$. By the above, we then deduce 
\begin{align*}
\mu_{1}(\{x_{l^{*}}\}) & \leq \lim_{\varepsilon\searrow 0} \int_{\R^{n}}|\varphi_{\varepsilon} u^{R}|^{\frac{n}{n-1}}\dif x  + \int_{\R^{n}}|\varphi|^{\frac{n}{n-1}}\dif\mu_{1}  \\ & \leq c \lim_{\varepsilon\searrow 0}\Big(\int_{\R^{n}}|\varphi_{\varepsilon}|\dif |\A[D]u^{R}|+\int_{\ball(x_{l^{*}},\varepsilon)}|\nabla\varphi_{\varepsilon}\otimes_{\A}u^{R}|(\mathscr{L}^{n}\mres\ball(0,R)) \Big)^{\frac{n}{n-1}} \\ 
& \leq c \Big(|\A[D]u^{R}|(\{x_{l^{*}}\}))^{\frac{n}{n-1}} + c \lim_{\varepsilon\searrow 0}\Big(\|\nabla\varphi_{\varepsilon}\|_{\lebe^{n}(\ball(x_{l^{*}},\varepsilon)}\|u^{R}\|_{\lebe^{\frac{n}{n-1}}(\ball(x_{l^{*}},\varepsilon)} \Big)^{\frac{n}{n-1}} \\
& =  c \Big(|\A[D]u^{R}|(\{x_{l^{*}}\}))^{\frac{n}{n-1}}.
\end{align*}
Here, the ultimate inequality is a consequence of the change of variables
\begin{align*}
\int_{\ball(x_{l^{*}},\varepsilon)}|\nabla\varphi_{\varepsilon}|^{n}\dif x \leq \int_{\ball(0,1)}|\nabla\varphi|^{n}\dif x \leq C 
\end{align*}
and the fact that $u^{R}$ belongs to $\lebe^{\frac{n}{n-1}}(\ball(0,R),V)$. Since $\A[D]$ is elliptic and cancelling, Lemma~\ref{lem:nonatomic} implies that $\A[D]u$ is necessarily non-atomic. Thus $|\A[D]u^{R}|(\{x_{l^{*}}\})=0$, and by the second step, $0<a_{l^{*}}<\mu_{1}(\{x_{l^{*}}\})\leq 0$. This is the desired contradiction, and the proof of \eqref{eq:intermediatestrictclaim} is complete. 

\emph{Step 2. Conclusion of the full claim.} We start by noting that if $\Omega\subset\R^{n}$ is an open set with $|\A[D]u|(\partial\Omega)=0$, then there holds 
\begin{align}\label{eq:strictclaim}
u_{j}\to u\;\;\;\text{$\A$-strictly in $\bv^{\A}(\Omega)$}, 
\end{align}
which may be regarded as \emph{locality} of $\A$-strict convergence. To see \eqref{eq:strictclaim}, we remark that if $\mu,\mu_{1},\mu_{2}...\in\mathscr{M}(\R^{n},W)$ satisfy $\mu_{j}\to\mu$ strictly as $j\to\infty$, then their total variation measures satisfy $|\mu_{j}|\stackrel{*}{\rightharpoonup}|\mu|$. By classical results on weak*-convergence of Radon measures, this implies 
\begin{align*}
|\A[D]u|(\Omega) \leq \liminf_{j\to\infty}|\A[D]u_{j}|(\Omega) \leq \limsup_{j\to\infty}|\A[D]u_{j}|(\overline{\Omega}) \leq |\A[D]u|(\overline{\Omega}). 
\end{align*}
But since $|\A[D]u|(\Omega)=|\A[D]u|(\overline{\Omega})$, the preceding inequality gives $|\A[D]u_{j}|(\Omega)\to|\A[D]u|(\Omega)$ as $j\to\infty$, and together with $u_{j}\to u$ in $\lebe^{\frac{n}{n-1}}(\ball(0,R),V)$, we arrive at \eqref{eq:strictclaim}. We proceed by claiming that
\begin{align}\label{eq:noconcentrationatinfty}
\lim_{R\to\infty}\sup_{j\in\mathbb{N}}|\A[D]u_{j}|(\R^{n}\setminus\ball_{R})=0, 
\end{align}
where $\ball_{R}:=\ball(0,R)$. Indeed, if \eqref{eq:noconcentrationatinfty} were false, then we would find $\theta>0$ such that for each $l\in\mathbb{N}$ there exists $j_{l}\in\mathbb{N}$ with $|\A[D]u_{j_{l}}|(\R^{n}\setminus\ball_{l})\geq \theta$. As $|\A[D]u|$ is a finite Radon measure, we find $R>0$ such that $|\A[D]u|(\ball_{R}^{c})<\frac{\theta}{2}$ and $|\A[D]u|(\partial\!\ball_{R})=0$; the latter is possible by the fact that $|\A[D]u|$ charges at most countably many spheres $\partial\!\ball_{r}$, $r>0$. Since $u_{j_{l}}\to u$ $\A$-strictly in $\bv^{\A}(\R^{n})$, we find by \eqref{eq:strictclaim} that $u_{j}\to u$ $\A$-strictly in $\bv^{\A}(\overline{\ball_{R}}^{c})$. Then, for $l\geq R$,  
\begin{align*}
\theta \leq |\A[D]u_{j_{l}}|(\ball_{l}^{c}) \leq |\A[D]u_{j_{l}}|(\ball_{R}^{c})\stackrel{l\to\infty}{\longrightarrow} |\A[D]u|(\ball_{R}^{c})<\frac{\theta}{2}, 
\end{align*}
an obvious contradiction. To conclude the proof, let $\varepsilon>0$ be given and pick $j_{0}\in\mathbb{N}$ such that $\|u-u_{j}\|_{\lebe^{1}(\R^{n},V)}<\varepsilon$ and $||\A[D]u|(\R^{n})-|\A[D]u_{j}|(\R^{n})|<\varepsilon$ for all $j\geq j_{0}$. Based on \eqref{eq:noconcentrationatinfty}, we find $R_{\varepsilon}>0$ such that $\sup_{j\in\mathbb{N}}|\A[D]u_{j}|(\ball_{R_{\varepsilon}}^{c})<\varepsilon$ and $|\A[D]u|(\ball_{R_{\varepsilon}}^{c})<\varepsilon$. We pick a smooth cut-off $\rho_{R_{\varepsilon}}\in\hold^{\infty}(\R^{n},[0,1])$ with $\mathbbm{1}_{\ball_{R_{\varepsilon}+1}^{c}}\leq \rho_{R_{\varepsilon}} \leq \mathbbm{1}_{\ball_{R_{\varepsilon}}^{c}}$ and $|\nabla\rho_{R_{\varepsilon}}|\leq 1$. By step 1, we find $j_{1}\in\mathbb{N}$ such that $\|u-u_{j}\|_{\lebe^{\frac{n}{n-1}}(\ball_{R_{\varepsilon}+1},V)}<\varepsilon$ for all $j\geq j_{1}$. In consequence, we find for $j\geq \max\{j_{0},j_{1}\}$
\begin{align*}
\|u-u_{j}\|_{\lebe^{\frac{n}{n-1}}(\R^{n},V)} & \leq \|(1-\rho_{R_{\varepsilon}})(u-u_{j})\|_{\lebe^{\frac{n}{n-1}}(\R^{n},V)} + \|\rho_{R_{\varepsilon}}(u-u_{j})\|_{\lebe^{\frac{n}{n-1}}(\R^{n},V)} \\ 
& \leq \|u-u_{j}\|_{\lebe^{\frac{n}{n-1}}(\ball_{R_{\varepsilon}+1},V)} + c\|\A[D](\rho_{R_{\varepsilon}}(u_{j}-u))\|_{\lebe^{1}(\R^{n},W)} \\
& \leq \varepsilon + c\|u_{j}-u\|_{\lebe^{1}(\R^{n},V)} + c|\A[D]u|(\ball_{R_{\varepsilon}}^{c}) + c|\A[D]u_{j}|(\ball_{R_{\varepsilon}}^{c})\\
& \leq c\varepsilon.
\end{align*}
Here, the second inequality follows from $\A[D]$ being elliptic and cancelling, cf. Theorem~\ref{thm:main1} with $\mu=\mathscr{L}^{n}$ and $s=0$. It is clear that $c>0$ merely depends on $\A[D]$, and this completes the proof. 
\end{proof}
We note carefully that the above proof does \emph{not} imply that for elliptic and cancelling operators the embedding $\bv^{\A}(\Omega)\hookrightarrow\lebe^{\frac{n}{n-1}}(\Omega,V)$ is $\A$-strictly continuous. In fact, for mere elliptic and cancelling operators there does not even hold $\bv^{\A}(\Omega)\subset\lebe^{\frac{n}{n-1}}(\Omega,V)$, cf. \cite{GR}. Now we come to the:
\begin{figure}\label{fig:table}
	\begin{tabular}{ c | c | c | c }
	& \multicolumn{3}{ c }{$\A[D]$ is}\\ \cline{2-4}
		$s$ & elliptic  &  elliptic and cancelling &  $\mathbb{C}$-elliptic \\ \hline
		$0\leq s < 1$ & -- & {\small $\A$-strictly conti-} & {\small $\A$-strictly conti-} \\ && {\small nuous into $\lebe^{\frac{n-s}{n-1}}(\dif\mu)$}& {\small nuous into $\lebe^{\frac{n-s}{n-1}}(\dif\mu)$}\\ \hline 
		$s=1$ & -- & -- & {\small norm continuous, not $\A$-strictly} \\ 
 &&& {\small continuous into $\lebe^{1}(\dif\mu)$} \\ \hline 
		$1<s\leq n$ & -- & -- & -- 
	\end{tabular}
	\caption{{\small Context of Propositions~\ref{prop:tracedefine},~\ref{prop:traces=1} and Theorem~\ref{thm:strict}; existence and continuity of codimension $s$ interior trace operators, i.e., for $\mu\in\lebe^{1,n-s}(\R^{n})$, and for first order operators $\A[D]$. For such measures $\mu$, the table lists the continuity properties of the trace embedding operators $\trace_{\Sigma}$ into the corresponding Lebesgue spaces, whereas (--) indicates non-existence of a trace operator. If $s=1$, we suppose that $\mu=\mathscr{H}^{n-1}\mres\Sigma$ for some affine hyperplane or a Lipschitz surface $\Sigma$.}}
	\label{fig:continuity}
\end{figure}
\begin{proof}[Proof of Theorem~\ref{thm:strict}]
Let $u,u_{1},u_{2},...\in\bv^{\A}(\R^{n})$ such that $u_{j}\to u$ $\A$-strictly in $\bv^{\A}(\R^{n})$. By Proposition~\ref{prop:continuitySobolevembedding}, we have that $u_j\rightarrow u$ strongly in $\lebe^{n/(n-1)}(\R^{n},V;\dif\mathscr{L}^n)$. By construction of the trace operator $\trace_{\mu}$ of Proposition~\ref{prop:tracedefine}, we immediately obtain by Theorem~\ref{thm:main1} that, as $\A[D]$ is elliptic and cancelling,  
\begin{align*}
\|\trace_{\mu}(v)\|_{\lebe^{q}(\dif\mu)}\leq c\|\mu\|_{\lebe^{1,n-s}(\R^{n})}^{1/q}\|v\|_{\lebe^{\frac{n}{n-1}}(\R^{n},V)}^{1-\theta}|\A[D]v|(\R^{n})^{\theta}
\end{align*}
for some fixed $s\frac{n-1}{n-s}<\theta<1$ and all $v\in\bv^{\A}(\R^{n})$. In conclusion, applying the preceding inequality to $v=u_{j}-u$, we arrive at
	\begin{align*}
		\|\trace_{\mu}(u_{j}-u)\|_{\lebe^q(\dif\mu)}&\leq c\|\mu\|^{1/q}_{\lebe^{1,n-s}}\|u_j-u\|^{1-\theta}_{\lebe^{\frac{n}{n-1}}(\dif\mathscr{L}^n)}|\A[D]u_j-\A[D]u|(\R^n)^\theta\\
		&\leq c\|\mu\|^{1/q}_{\lebe^{1,n-s}}\|u_j-u\|^{1-\theta}_{\lebe^{\frac{n}{n-1}}(\dif\mathscr{L}^n)}(|\A[D]u_j|(\R^n)+\A[D]u|(\R^n))^\theta\\
		&\rightarrow c\|\mu\|^{1/q}_{\lebe^{1,n-s}}\times 0\times|\A[D]u|(\R^n)^\theta=0,
	\end{align*}
	which concludes the proof.
\end{proof}
The proof moreover demonstrates how the multiplicative trace inequality of Theorem~\ref{thm:main1} crucially boosts norm-continuity of the corresponding trace operators to $\A$-strict continuity. 
\subsection{Examples: $\bv,\bd$ and $\bl$}\label{sec:examplesBVandBD}
We conclude the paper by singling out the implications of the results gathered so far for widely used function spaces. Doing so, we begin with the most classical space, the functions of bounded variation. 
\begin{theorem}[Interior traces and strict continuity for $\bv$]\label{thm:inttracesBV}
Let $n\geq 2$ and $0\leq s \leq 1$. Then, for each $\mu\in\lebe^{1,n-s}(\R^{n})$, there exists a linear trace operator $\trace_{\mu}\colon\bv(\R^{n})\to\lebe^{\frac{n-s}{n-1}}(\R^{n};\dif\mu)$ such that $\trace_{\mu}(\varphi)=\varphi|_{\spt(\mu)}$ for all $\varphi\in(\hold\cap\bv)(\R^{n})$. Moreover, 
\begin{enumerate}
\item\label{item:BV1} if $0\leq s<1$, $\trace_{\mu}$ is continuous for the strict topology, and 
\item\label{item:BV2} if $s=1$, $\trace_{\mu}$ is in general not continuous for the strict topology.
\end{enumerate}
Here, the strict topology is the $\A$-strict topology for the particular choice $\A[D]=D$. 
\end{theorem}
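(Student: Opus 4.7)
The plan is to specialise the general $\A[D]$-framework from Sections~\ref{sec:EC}--\ref{sec:selected} to $\A[D]=D\colon V=\R\to W=\R^n$, and then exhibit an explicit counterexample at $s=1$.

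First I would verify that $D$ fits the earlier hypotheses: its distributional nullspace consists of constants, hence is finite dimensional, so by Proposition~\ref{prop:smith} it is $\mathbb{C}$-elliptic, and consequently elliptic, cancelling and strongly cancelling by Proposition~\ref{lem:FDN_implies_EC}. With $V=\R$, the spaces $\bv^D(\R^n)$ and $\bv(\R^n)$ coincide, and the $D$-strict convergence from Section~\ref{sec:BVAspaces} reduces to the classical strict convergence on $\bv$.

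For existence of $\trace_\mu$ when $0\leq s<1$, Proposition~\ref{prop:tracedefine} directly supplies a norm-continuous trace operator $\trace_\mu\colon\bv(\R^n)\to\lebe^{(n-s)/(n-1)}(\R^n;\dif\mu)$ agreeing with pointwise restriction on continuous $\bv$-maps, and item~\ref{item:BV1} is then immediate from Theorem~\ref{thm:strict}\ref{itm:tr2intro}. When $s=1$ and $\mu=\mathscr{H}^{n-1}\mres\Sigma$ on an affine hyperplane, Proposition~\ref{prop:traces=1} applies directly; for general $\mu\in\lebe^{1,n-1}(\R^n)$ I would invoke the classical Meyers--Ziemer theorem \cite{MeZi}, which yields both the $\mathscr{H}^{n-1}$-a.e.\ existence of the precise representative $u^*$ of $u\in\bv(\R^n)$ and the inequality $\|u^*\|_{\lebe^1(\dif\mu)}\leq c\|\mu\|_{\lebe^{1,n-1}}|Du|(\R^n)$, so that $\trace_\mu(u):=u^*$ furnishes the desired operator.

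The heart of the proof is item~\ref{item:BV2}, which I would establish by an elementary counterexample. Fix $\Sigma=\{x_n=0\}$, $\mu=\mathscr{H}^{n-1}\mres\Sigma$, and $\varphi\in\hold^\infty_c(\R^n)$ with $\int_\Sigma|\varphi|\dif\mathscr{H}^{n-1}>0$, and set
\begin{align*}
u:=\varphi\mathbbm{1}_{\{x_n>0\}},\qquad u_j:=\varphi\mathbbm{1}_{\{x_n>1/j\}}\quad\text{for }j\in\mathbb{N}.
\end{align*}
Dominated convergence gives $u_j\to u$ in $\lebe^1(\R^n)$, and splitting $Du_j$ into its absolutely continuous part $\nabla\varphi\mathbbm{1}_{\{x_n>1/j\}}\mathscr{L}^n$ and its singular part on $\{x_n=1/j\}$, together with continuity of $\varphi$, shows $|Du_j|(\R^n)\to|Du|(\R^n)$, hence $u_j\to u$ strictly in $\bv(\R^n)$. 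On the other hand, $u_j$ vanishes in a neighbourhood of $\Sigma$, so $\trace_\mu(u_j)=0$ for every $j$, whereas a short Lebesgue-point computation gives $u^*=\tfrac12\varphi$ on $\Sigma$, so $\trace_\mu(u)\neq 0$, contradicting strict continuity.

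The main obstacle I anticipate is the trace identification in the last step: one must confirm that the abstract $\trace_\mu$ assigns the half-jump $\tfrac12\varphi|_\Sigma$ to $u$. This is where the half-sum construction $\trace_\Sigma=\tfrac12(\trace_\Sigma^++\trace_\Sigma^-)$ from the proof of Proposition~\ref{prop:traces=1} is essential: since $u|_{\Sigma^+}$ extends continuously up to $\Sigma$ with boundary value $\varphi|_\Sigma$ while $u|_{\Sigma^-}\equiv 0$, the operator returns exactly the half-jump, consistent with the Meyers--Ziemer precise representative. All remaining verifications are routine.
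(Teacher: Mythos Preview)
Your proposal is correct and follows essentially the same approach as the paper: reduce existence and item~\ref{item:BV1} to the general $\bv^{\A}$-results (Proposition~\ref{prop:tracedefine}, Theorem~\ref{thm:strict}) applied to $\A[D]=D$, and establish item~\ref{item:BV2} by exhibiting a strictly convergent sequence whose traces fail to converge because the limit has a half-jump across $\Sigma$. The paper's counterexample differs only cosmetically---it takes $\Sigma=\partial\ball(0,1)$ and uses continuous tent functions $\rho_j$ approximating $\mathbbm{1}_{\ball(0,1)}$ rather than your discontinuous step functions on a flat hyperplane---but the mechanism is identical, and your choice of a hyperplane in fact fits more directly with the setup of Proposition~\ref{prop:traces=1}.
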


\begin{proof}
The existence of the claimed linear trace operators is a consequence of \cite[Thm.~5.13.1]{Ziemer} for $0\leq s<1$ and \cite[Thm.~5.12.4]{Ziemer} for $s=1$; note that for $0\leq s <1$ this also follows from Proposition~\ref{prop:tracedefine} as $\A[D]$ is elliptic and cancelling. For $0\leq s <1$, the strict continuity assertion of~\ref{item:BV1} is obtained by Theorem~\ref{thm:strict} and so it remains to argue for \ref{item:BV2}. To this end, let $\Sigma:=\partial\!\ball(0,1)$ and $\rho_{j}(x):=\varphi_{j}(|x|)$, $x\in\R^{n}$, where $\varphi_{j}\colon\R\to\R$ is given by 
\begin{align*}
\varphi_{j}(t)=\mathbbm{1}_{[-1-1/j,-1]}(t)(jt+(j+1))+\mathbbm{1}_{(-1,1)}(t)+\mathbbm{1}_{[1,1+1/j]}(t)(-jt+j+1),
\end{align*}
see Figure~\ref{fig:nostrictcontinuity}. Then $\rho_{j}\to \mathbbm{1}_{\ball(0,1)}$ strictly in $\bv(\R^{n})$, $\trace(\rho_{j})=1$ $\mathscr{H}^{n-1}$-a.e. on $\Sigma$ for all $j\in\mathbb{N}$, but we have for the interior trace $\trace_{\Sigma}(1_{\Sigma})=\frac{1}{2}$ $\mathscr{H}^{n-1}$-a.e., cf. \cite[Thm.~3.77, Cor.~3.80]{AFP}. The proof is complete. 
\end{proof}
To the best of our knowledge, \ref{item:BV1} and \ref{item:BV2} of the previous theorem seem to be new. As a consequence of \ref{item:BV2}, we explicitly make the following:
\begin{figure}\label{fig:nostrictcontinuity}
	\hspace{2cm}
	\begin{tikzpicture}[scale=0.8]
	\draw (-1,0) arc (180:360:1cm and 0.25cm);
	\draw (-1,0) arc (180:0:1cm and 0.25cm);
	\draw[dashed,blue] (-1,-1.5) arc (180:360:1cm and 0.25cm);
	\draw[dashed,blue] (-1,-1.5) arc (180:0:1cm and 0.25cm);
	\draw[->] (2.25,-0.5) -- (0.95,-1.25);
	\node at (3.95,-0.45) {\text{$\bv$-trace of $\mathbbm{1}_{\ball(0,1)}$}};
	\draw (-2,-3) arc (180:370:2cm and 0.5cm);
	\draw[dashed] (-2,-3) arc (180:10:2cm and 0.5cm);
	\draw(-2,-2.93)  -- (-1,0);
	\draw(2,-2.93)   -- (1,0);
	\draw[-] (0,0) -- (0,1.25);
	\draw[-] (-0.075,0) -- (0.075,0);
	\draw[-] (1,-3.075) -- (1,-2.925);
	\node at (0.175,0) {\footnotesize{\text{$1$}}};
	\node at (1.1,-3.2) {\footnotesize{\text{$1$}}};
	\draw[dotted,-] (0,0) -- (0,-3);
	\draw[dotted,-] (0,0) -- (0,-3);
	\draw[|-|,red] (-2,-3) -- (-1,-3);
	\draw[dotted] (-2,-3) -- (2,-3);
	\draw[->] (-2.75,-1.85) -- (-1.5,-2.9);
	\node at (-2.885,-1.725) {\text{$\frac{1}{j}$}};
	\draw[-] (-2,-3) -- (-2.5,-3);
	\draw[-] (2,-3) -- (2.5,-3);
	\draw[dotted] (0.75,-2.5) -- (-0.75,-3.5);
	\draw[-] (-0.75,-3.5) -- (-1,-3.68);
	\end{tikzpicture}
	\caption{The situation in the proof of Theorem~\ref{thm:inttracesBV}, depicting the graph of $\rho_{j}$. Then $(\rho_{j})$ converges to $1_{\ball(0,1)}$ strictly and each $\rho_{j}$ has trace $1$ along $\partial\!\ball(0,1)$, but the strict limit has trace $\frac{1}{2}$ $\mathscr{H}^{n-1}$-a.e. on $\partial\ball(0,1)$.}
\end{figure}
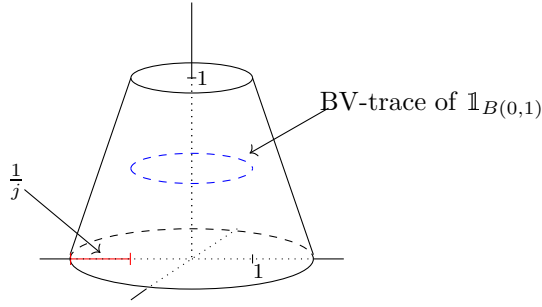
\begin{remark}\label{rem:nonstrictlycontinuous}
Let $\A[D]$ be a first order, $\mathbb{C}$-elliptic operator of the form \eqref{eq:form}. Letting $s=1$ and $\mu=\mathscr{H}^{n-1}\mres\Sigma$ for some hyperplane or sufficiently smooth $(n-1)$-dimensional submanifold of $\R^{n}$,  the trace operator $\trace_{\mu}\colon \bv^{\A}(\R^{n})\to\lebe^{q}(\dif\mu)$ from Proposition~\ref{prop:traces=1} is \emph{not $\A$-strictly continuous in general}. On the contrary, if $0\leq s<1$, then it \emph{is} $\A$-strictly continuous for if $\A[D]$ is elliptic and cancelling by Theorem~\ref{thm:strict}. In this sense, the lower codimension $s<1$ in Theorem~\ref{thm:strict} compensates the lack of $\A$-strict continuity of the interior trace operators.
\end{remark}
If $n\geq 2$, the spaces $\bd(\R^{n})$ and $\bl(\R^{n})$ are obtained from $\bv^{\A}(\R^{n})$ by the particular choice of $\A[D]$ being the symmetric gradient operator (cf.~Example~\ref{ex:symmetricgradient}) or the trace-free symmetric gradient operator (cf.~Example~\ref{ex:tracefreesymmetricgradient}). The symmetric strict and trace-free symmetric strict topologies are defined accordingly for the respective choices of $\A[D]$. Also note that $\bv(\R^{n},\R^{n})\subsetneq\bd(\R^{n})\subsetneq\bl(\R^{n})$. 
\begin{theorem}\label{ex:implications}
Let $0\leq s< 1$. Then the following holds: 
\begin{enumerate}
\item If $n\geq 2$, then for each $\mu\in\lebe^{1,n-s}(\R^{n})$ there exists a linear, symmetric strictly continuous trace operator $\trace_{\mu}\colon\bd(\R^{n})\to\lebe^{\frac{n-s}{n-1}}(\R^{n};\dif\mu)$ such that $\trace_{\mu}(\varphi)=\varphi|_{\spt(\mu)}$ for all $\varphi\in(\hold\cap\bd)(\R^{n})$.
\item If $n\geq 3$ and $\mu\in\lebe^{1,n-s}(\R^{n})$, then there exists a linear, symmetric strictly continuous trace operator $\trace_{\mu}\colon\bl(\R^{n})\to\lebe^{\frac{n-s}{n-1}}(\R^{n};\dif\mu)$ such that $\trace_{\mu}(\varphi)=\varphi|_{\spt(\mu)}$ for all $\varphi\in(\hold\cap\bl)(\R^{n})$.
\end{enumerate}
\end{theorem}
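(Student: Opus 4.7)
The plan is to deduce both statements as immediate consequences of Theorem~\ref{thm:strict}, applied to the first order operators $\A[D]=\E$ and $\A[D]=\E^{D}$ respectively, once it is verified that each of these operators meets the hypotheses (first order, elliptic and cancelling) in the stated range of dimensions. Indeed, $\bd(\R^{n})=\bv^{\E}(\R^{n})$ by definition, and the symmetric strict topology on $\bd(\R^{n})$ coincides with the $\E$-strict topology in the sense of Section~\ref{sec:BVAspaces}; the analogous remark holds for $\bl(\R^{n})=\bv^{\E^{D}}(\R^{n})$ with $n\geq 3$ and the trace-free symmetric strict topology.

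For part (a), I first record that $\E$ is a first order differential operator in the framework of \eqref{eq:form}, with $V=\R^{n}$ and $W=\R_{\sym}^{n\times n}$. By Example~\ref{ex:symmetricgradient}, the nullspace of $\E$ consists of rigid deformations $\mathscr{R}(\R^{n})$, a finite dimensional space of polynomials, so $\E$ is $\mathbb{C}$-elliptic by Proposition~\ref{prop:smith}. In particular $\E$ is elliptic, and strongly cancelling (hence cancelling) by Proposition~\ref{lem:FDN_implies_EC}. I can therefore apply Theorem~\ref{thm:strict} with $\A[D]=\E$ to any $\mu\in\lebe^{1,n-s}(\R^{n})$ with $0\leq s<1$, obtaining the norm-continuous linear trace operator $\trace_{\mu}\colon\bd(\R^{n})\to\lebe^{\frac{n-s}{n-1}}(\R^{n};\dif\mu)$ which is $\E$-strictly continuous. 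The identification $\trace_{\mu}(\varphi)=\varphi|_{\spt(\mu)}$ for $\varphi\in(\hold\cap\bd)(\R^{n})$ is inherited from the construction in Proposition~\ref{prop:tracedefine} via the precise representative.

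For part (b), I likewise set $\A[D]=\E^{D}$ and observe from Example~\ref{ex:tracefreesymmetricgradient} that for $n\geq 3$ the nullspace of $\E^{D}$ consists of conformal Killing vectors, which form a finite dimensional subspace of $\mathscr{P}_{2}(\R^{n},\R^{n})$; thus $\E^{D}$ has finite dimensional nullspace and is $\mathbb{C}$-elliptic by Proposition~\ref{prop:smith}, hence elliptic and strongly cancelling by Proposition~\ref{lem:FDN_implies_EC}. Theorem~\ref{thm:strict} then directly yields the claimed norm-continuous and $\E^{D}$-strictly continuous trace operator $\trace_{\mu}\colon\bl(\R^{n})\to\lebe^{\frac{n-s}{n-1}}(\R^{n};\dif\mu)$, again with $\trace_{\mu}(\varphi)=\varphi|_{\spt(\mu)}$ for $\varphi\in(\hold\cap\bl)(\R^{n})$.

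There is essentially no obstacle here beyond bookkeeping: all nontrivial analytic work is already encapsulated in Theorem~\ref{thm:strict} (which rests on Theorem~\ref{thm:main1} and Proposition~\ref{prop:continuitySobolevembedding}) and in the algebraic Propositions~\ref{prop:smith} and~\ref{lem:FDN_implies_EC}. The only point worth emphasising is the dimension restriction $n\geq 3$ in (b): for $n=2$ the operator $\E^{D}$ fails to be cancelling, as recalled in Example~\ref{ex:tracefreesymmetricgradient}, so Theorem~\ref{thm:strict} is inapplicable and the statement is not claimed in that case.
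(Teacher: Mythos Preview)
Your proposal is correct and follows essentially the same approach as the paper: apply Theorem~\ref{thm:strict} to $\A[D]=\E$ and $\A[D]=\E^{D}$ after verifying that these operators are elliptic and cancelling in the stated dimensions, invoking Examples~\ref{ex:symmetricgradient} and~\ref{ex:tracefreesymmetricgradient}. The paper's proof is a one-line reference to Theorem~\ref{thm:strict} together with the remark that $\E^{D}$ fails cancellation for $n=2$; your version simply spells out the route via Propositions~\ref{prop:smith} and~\ref{lem:FDN_implies_EC} in slightly more detail.
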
	
\begin{proof}
The statement follows from Proposition~Theorem~\ref{thm:strict} since, for the respective choice of underlying dimensions $n$, the operators $\E\,$ and $\E^{D}$ are elliptic and cancelling; note that $\E^{D}$ is \emph{not elliptic and cancelling} for if $n=2$, cf.~Example~\ref{ex:tracefreesymmetricgradient}. 
\end{proof}

\section*{Appendix}
Here we collect some background facts on the strict convergence of measures and discuss its connection to the $\A$-strict convergence of functions of bounded $\A$-variation. Let $(X,d)$ be a metric space and $W$ a finite dimensional real vector space. Namely, we recall from \cite{AFP} that a sequence of finite, $W$-valued Radon measures $(\mu_{j})\subset\mathscr{M}(X,W)$ is said to converge \emph{strictly} to $\mu\in\mathscr{M}(X,W)$ if and only if 
\begin{itemize}
\item[(i)] $\mu_{j}\stackrel{*}{\rightharpoonup}\mu$ and
\item[(ii)] $|\mu_{j}|(X)\to |\mu|(X)$
\end{itemize}
as $j\to\infty$. If $\Omega\subset\R^{n}$ is an open set, the notion of $\A$-strict convergence of a sequence $(u_{j})\subset\bv^{\A}(\Omega)$ to some $u\in\bv^{\A}(\Omega)$ then indeed implies $\A[D] u_{j}\to \A[D]u$ strictly as $j\to\infty$. 

In fact, in this situation the Banach-Alaoglu-Bourbaki theorem that every subsequence $(\A[D]u_{j(l)})$ of $(\A[D]u_{j})$ contains a subsequence $(\A[D]u_{j(l(i))})$ such that, for some $\mu_{j,l}\in\mathscr{M}(\Omega,W)$ there holds $\A[D]u_{j(l(i))}\stackrel{*}{\rightharpoonup} \mu_{j,l}$ as $i\to\infty$. We claim that this weak*-limit is $\A[D]u$ throughout. Namely, let $\varphi\in\hold_{c}^{\infty}(\Omega,W)$ be arbitrary. Then we have 
\begin{align*}
\int_{\Omega}\varphi\dif \A[D]u_{j(l(i))} = - \int_{\Omega}u_{j(l(i))}\A^{*}[D]\varphi\dif x \stackrel{i\to\infty}{\longrightarrow} -\int_{\Omega}u\A^{*}[D]\varphi\dif x = \int_{\Omega}\varphi\dif\A[D]u. 
\end{align*}
Therefore, necessarily $\mu_{j,l}=\A[D]u$, and so we deduce that we also have $\A[D]u_{j}\stackrel{*}{\rightharpoonup}\A[D]u$ in $\mathscr{M}(\Omega,W)$.


\begin{thebibliography}{99}
\bibitem{AdamsTraces1} \textsc{Adams, D.R.}, 1971.  \emph{Traces of potentials arising from translation invariant operators}. Ann. Scuola Norm. Sup. Pisa Cl. Sci. 25, 203-217.
\bibitem{AdamsTraces2} \textsc{Adams, D.R.}, 1973. \emph{Traces of potentials. II}, Indiana Univ. Math. J. 22, 907-918.
\bibitem{AH} \textsc{Adams, D.R.} and \textsc{Hedberg, L.I.}, 2012. \emph{Function spaces and potential theory} (Vol. 314). Springer Science \& Business Media.
\bibitem{AFP} \textsc{Ambrosio, L.}, \textsc{Fusco, N.}, and \textsc{Pallara, D.}, 2000. \emph{Functions of bounded variation and free discontinuity problems} (Vol. 254). Oxford: Clarendon Press.
\bibitem{AnGi0} \textsc{Anzellotti, G.} and \textsc{Giaquinta, M.}, 1978. \emph{Funzioni BV e tracce.} Rendiconti del Seminario Matematico della Universit\'{a} di Padova, tome 60, p. 1-21.
\bibitem{AnGi} \textsc{Anzellotti, G.} and \textsc{Giaquinta, M.}, 1980. \emph{Existence of the displacements field for an elasto-plastic body subject to Hencky's law and Von Mises yield condition}. Manuscripta Mathematica, {32}(1-2), pp.101-136.
\bibitem{ARDPHR} \textsc{Arroyo-Rabasa, A.}, \textsc{De Philippis, G.}, \textsc{Hirsch, J.}, and \textsc{Rindler, F.}, 2018. \emph{Dimensional estimates and rectifiability for measures satisfying linear PDE constraints}. arXiv preprint arXiv:1811.01847v2.
\bibitem{AW} \textsc{Ayoush, R.} and \textsc{Wojciechowski, M.}, 2017. \emph{On dimension and regularity of bundle measures}. arXiv preprint arXiv:1708.01458v2.
\bibitem{BB3}  \textsc{Bourgain, J.} and \textsc{Brezis, H.}, 2004. \emph{New estimates for the Laplacian, the div-curl, and related Hodge systems}, C. R. Math.
Acad. Sci. Paris 338, no. 7, 539-543.
\bibitem{BB4}  \textsc{Bourgain, J.} and \textsc{Brezis, H.}, 2007. \emph{New estimates for elliptic equations and Hodge type systems}, J. Eur. Math. Soc. (JEMS) 9, no. 2, 277-315.
\bibitem{BVS} \textsc{Bousquet, P.} and \textsc{Van Schaftingen, J.}, 2014. \emph{Hardy-Sobolev inequalities for vector fields and cancelling linear differential operators}. Indiana University Mathematics Journal, pp. 1419-1445.
\bibitem{BDG} \textsc{Breit, D.}, \textsc{Diening, L.}, and \textsc{Gmeineder, F.}, 2017. \emph{Traces of functions of bounded $\A$-variation and variational problems with linear growth}. arXiv preprint arXiv:1707.06804v1. To appear in Anal. PDE. 
\bibitem{BrezisLieb} \textsc{Brezis, H.} and \textsc{Lieb, E.}, 1983: \emph{ A relation between pointwise convergence of functions and convergence
of functionals}. Proc. Amer. Math. Soc. 88, no. 3, 486-490.
\bibitem{ChCr} \textsc{Chambolle, A.} and \textsc{Crismale, V.}, 2018. \emph{Phase-field approximation of some fracture energies of cohesive type}. arXiv preprint arXiv:1812.05301.
\bibitem{CFM}  \textsc{Conti, S.}, \textsc{Faraco, D.}, and \textsc{Maggi, F.}, 2005. \emph{A new approach to counterexamples to $\lebe^1$ estimates: Korn's inequality, geometric rigidity, and regularity for gradients of separately convex functions}. Archive for rational mechanics and analysis, 175(2), pp.287-300.
\bibitem{CLO} \textsc{Cox, D.}, \textsc{Little, J.}, and \textsc{O'Shea, D.}, 1997. \emph{Ideals, varieties, and algorithms}. Undergraduate Texts in Mathematics.
\bibitem{DGK} \textsc{Dacorogna, B.}, \textsc{Gangbo, W.}, and \textsc{Kneuss, O.}, 2018. \emph{Symplectic factorization, Darboux theorem and ellipticity}. In Annales de l'Institut Henri Poincar\'e C, Analyse non linéaire, 35(2), pp. 327-356.
\bibitem{DaFoLi} \textsc{Davoli, E.}, \textsc{Fonseca, I.}, and \textsc{Liu, P.}, 2019. \emph{Adaptive image processing: first order PDE constraint regularizers and a bilevel training scheme}. arXiv preprint arXiv:1902.01122.
\bibitem{DL} \textsc{Deny, J.} and \textsc{Lions, J.L.}, 1954. \emph{Les espaces du type de Beppo Levi}. Ann. Inst. Fourier, {5}, pp.305-370.
\bibitem{EKS} \textsc{Ephremidze, K.}, \textsc{Kokilashvili, V.} and \textsc{Samko, S.}, 2008. \emph{Fractional, maximal and singular operators in variable exponent Lorentz spaces}. Fract. Calc. Appl. Anal. 11:4, 407-420.
\bibitem{Gagliardo} \textsc{Gagliardo. E.}, 1957.  \emph{Caratterizzazioni della tracce sulla frontiera relative ad alcune classi di funzioni in $n$ variabili}. Rend. Sem. Mat. Univ. Padova 27, 284-305.
\bibitem{GR} \textsc{Gmeineder, F.} and \textsc{Rai\c{t}\u{a}, B.}, 2017. \emph{Embeddings for $\mathbb {A} $-weakly differentiable functions on domains}. arXiv preprint arXiv:1709.04508v1.
\bibitem{GR_diff} \textsc{Gmeineder, F.} and \textsc{Rai\c{t}\u{a}, B.}, 2017. \emph{On critical $\lebe^p$-differentiability of $\bd$-maps}. To appear in Rev. Mat. Iberoam. arXiv preprint arXiv:1802.10364.
\bibitem{Hoermander}  \textsc{H\"{o}rmander, L.}, 1958. \emph{Differentiability properties of solutions of systems of differential equations}, Ark.
Mat. 3, 527-535.
\bibitem{HormI} \textsc{H\"ormander, L.}, 1990. \emph{The Analysis of Linear Partial Differential Operators. I: Distribution Theory and Fourier Analysis}, 2nd ed., Springer-Verlag, Berlin.
\bibitem{Hutchinson} \textsc{Hutchinson, J.E.}, 1981. \emph{Fractals and self similarity}. Indiana University Mathematics Journal, {30}(5), pp.713-747.
\bibitem{KSW} \textsc{Kazaniecki, K.}, \textsc{Stolyarov, D.M.}, and \textsc{Wojciechowski, M.}, 2017. \emph{Anisotropic Ornstein noninequalities}. Analysis \& PDE, 10(2), pp.351-366.
\bibitem{KW}\textsc{Kazaniecki, K.} and \textsc{Wojciechowski, M.}, 2014. \emph{Ornstein's non-inequalities: Riesz product approach}. arXiv preprint arXiv:1406.7319.
\bibitem{KK} \textsc{Kirchheim, B.} and \textsc{Kristensen, J}, 2016. \emph{On rank one convex functions that are homogeneous of degree one}. Arch. Ration. Mech. Anal. 221, no. 1, 527-558.
\bibitem{KorKri} \textsc{Korobkov, M.V.} and \textsc{Kristensen, J.}, 2018. \emph{The Trace Theorem, the Luzin N- and Morse–Sard Properties for the Sharp Case of Sobolev-Lorentz Mappings}. The Journal of Geometric Analysis, 28(3), pp.2834-2856.
\bibitem{LaSt} \textsc{Lanzani, L.} and \textsc{Stein, E.}, 2005. \emph{A note on div curl inequalities}. Mathematical Research Letters, 12(1), pp.57-61.
\bibitem{Marstrand} \textsc{Marstrand, J.M.}, 1964. \emph{The $(\phi, s)$ regular subsets of $n$-space}. Transactions of the American Mathematical Society, 113(3), pp.369-392.
\bibitem{mazya}  \textsc{Maz'ya, V.I.}, 2011: \emph{Sobolev Spaces with Applications to Elliptic Partial Differential Equations}, augmented ed., in: Grundlehren der Mathematischen
Wissenschaften [Fundamental Principles of Mathematical Sciences], vol. 342, Springer, Heidelberg. 
\bibitem{MeZi} \textsc{Meyers, N.G.} and \textsc{Ziemer, W.P.}, 1977. \emph{Integral inequalities of Poincar\'e and Wirtinger type for $\bv$ functions}. American Journal of Mathematics, pp.1345-1360.
\bibitem{Ornstein}  \textsc{Ornstein, D.}, 1962. \emph{A non-equality for diferential operators in the $\lebe_{1}$-norm}, Arch. Rational Mech. Anal.
11, 40-49.
\bibitem{R} \textsc{Rai\c{t}\u{a}, B.}, 2017. \emph{Critical $ \lebe^ p $-differentiability of $ \mathrm{BV}^{\mathbb {A}} $-maps and cancelling operators}. arXiv preprint arXiv:1712.01251v2.
\bibitem{Raita18}  \textsc{Rai\c{t}\u{a}, B.}, 2018. \emph{$\lebe^1$-estimates and $\A$-weakly differentiable functions}. OxPDE Technical Report 18.01.
\bibitem{RaSk} \textsc{Rai\c{t}\u{a}, B.} and \textsc{Skorobogatova, A.}, 2019. \emph{Continuity and canceling operators of order $n$ on $\R^n$}. arXiv preprint arXiv:1903.03574.
\bibitem{Resh} \textsc{Reshetnyak, Y.G.}, 1968. \emph{Weak convergence of completely additive vector functions on a set}. Siberian Mathematical Journal, 9(6), pp.1039-1045. 
\bibitem{ReshKernel} \textsc{Reshetnyak, Y.G.}, 1970: \emph{Estimates for certain differential operators with finite-dimensional kernel.} (Russian)
Sibirsk. Mat. Z. 11, 414-428.
\bibitem{RS} \textsc{Rindler, F.} and \textsc{Shaw, G.}, 2015. \emph{Strictly continuous extension of functionals with linear growth to the space $\bv$}. The Quarterly Journal of Mathematics, 66(3), pp.953-978.
\bibitem{RW} \textsc{Roginskaya, M.} and \textsc{Wojciechowski, M.}, 2006. \emph{Singularity of vector valued measures in terms of Fourier transform}. Journal of Fourier Analysis and Applications, 12(2), pp.213-223.
\bibitem{Spencer} \textsc{Spencer, D.S.}, 1969.  Overdetermined systems of linear partial diferential equations, Bull. Amer. Math. Soc. 75, 179-239.
\bibitem{Smith} \textsc{Smith, K.T.}, 1970. \emph{Formulas to represent functions by their derivatives}. Mathematische Annalen, {188}(1), pp.53-77.
\bibitem{StTe} \textsc{Strang, G.} and \textsc{Temam, R.}, 1980. \emph{Functions of bounded deformation}. Archive for Rational Mechanics and Analysis, {75}(1), pp.7-21.
\bibitem{Triebel} \textsc{Triebel, H.}, 1983.
\emph{Theory of function spaces}, 
Monographs in Mathematics, 78. Birkh\"auser Verlag, Basel.
\bibitem{VSforms} \textsc{Van Schaftingen, J.}, 2010. \emph{Limiting fractional and Lorentz space estimates of differential forms}. Proceedings of the American Mathematical Society, pp.235-240.
\bibitem{VS} \textsc{Van Schaftingen, J.}, 2013. \emph{Limiting Sobolev inequalities for vector fields and cancelling linear differential operators}. Journal of the European Mathematical Society, {15}(3), 877-921. 
\bibitem{Ziemer} \textsc{Ziemer, W.P.}, 2012. \emph{Weakly differentiable functions: Sobolev spaces and functions of bounded variation} (Vol. 120). Springer Science \& Business Media.
\end{thebibliography}
\end{document}